\title{Some observations on the FGH theorem}
\author{Taishi Kurahashi\footnote{Email: kurahashi@people.kobe-u.ac.jp}
\footnote{Graduate School of System Informatics, Kobe University, 1-1 Rokkodai, Nada, Kobe 657-8501, Japan.}}
\date{}
\theoremstyle{plain}
\newtheorem{thm}{Theorem}[section]
\newtheorem{lem}[thm]{Lemma}
\newtheorem{prop}[thm]{Proposition}
\newtheorem{cor}[thm]{Corollary}
\newtheorem{fact}[thm]{Fact}
\newtheorem{prob}[thm]{Problem}
\theoremstyle{definition}
\newtheorem{defn}[thm]{Definition}
\newcommand{\GL}{\mathbf{GL}}
\newcommand{\GLS}{\mathbf{S}}
\newcommand{\PA}{\mathsf{PA}}
\newcommand{\PR}{\mathrm{Pr}}
\newcommand{\Prf}{\mathrm{Prf}}
\newcommand{\Prov}{\mathrm{Prov}}
\newcommand{\Proof}{\mathrm{Proof}}
\newcommand{\Con}{\mathrm{Con}}
\newcommand{\gn}[1]{\ulcorner#1\urcorner}
\newcommand{\pair}[1]{\langle#1\rangle}
\newcommand{\lp}[1]{\overline{\langle#1\rangle}}
\newcommand{\Sub}{\mathsf{Sub}}
\newcommand{\N}{\mathbb{N}}
\newcommand{\num}{\overline}
\newcommand{\LA}{\mathcal{L}_A}
\newcommand{\FGH}{\mathrm{FGH}}
\newcommand{\sFGH}{\mathsf{FGH}}
\newcommand{\Rf}{\mathrm{Rfn}}
\newcommand{\cx}{\mathfrak{c}}
\newcommand{\pc}{\preccurlyeq}
\newcommand{\R}{\scriptsize{\reflectbox{{\rm R}}}}
\begin{document}

\maketitle

\begin{abstract}
We investigate the Friedman--Goldfarb--Harrington theorem from two perspectives. 
Firstly, in the frameworks of classical and modal propositional logics, we study the forms of sentences whose existence is guaranteed by the FGH theorem. 
Secondly, we prove some variations of the FGH theorem with respect to Rosser provability predicates. 
\end{abstract}

\section{Introduction}

The notion of the weak representability of computably enumerable (c.e.)~sets plays an important role in a proof of G\"odel's first incompleteness theorem. 
We say that a set $X$ of natural numbers is \textit{weakly representable} in a theory $T$ if there exists a formula $\varphi_X(v)$ such that for any natural number $n$, $n \in X$ if and only if $T \vdash \varphi_X(\num{n})$. 
If a set $X$, which is c.e.~but not computable, is weakly representable in a c.e.~theory $T$, then it is shown that there exists a natural number $n$ such that $T \nvdash \varphi_X(\num{n})$ and $T \nvdash \neg \varphi_X(\num{n})$. 
Therefore, such a  theory $T$ is incomplete. 

It is easily shown that every c.e.~set is weakly representable in every $\Sigma_1$-sound c.e.~extension of $\PA$ because of $\Sigma_1$-soundness and $\Sigma_1$-completeness. 
Furthermore, Ehrenfeucht and Feferman \cite{EF} proved the weak representability of c.e.~sets in every consistent c.e.~extension of $\PA$ (see also Shepherdson \cite{She}). 

\begin{thm}[Ehrenfeucht and Feferman]\label{EFThm}
Let $T$ be any consistent c.e.~extension of $\PA$. 
Then, every c.e.~set of natural numbers is weakly representable in $T$. 
\end{thm}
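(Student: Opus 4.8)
The plan is to construct, by diagonalization, a formula whose provability in $T$ is forced to track membership in the c.e.\ set — a Rosser-style argument in the same spirit as the self-referential constructions behind the FGH theorem. Fix a c.e.\ set $X \subseteq \N$ and choose a $\Delta_0$ formula $\Prf_X(x,s)$ with $n \in X$ if and only if $\N \models \exists s\,\Prf_X(\num n, s)$; let $\Prf_T(x,s)$ be the standard $\Delta_0$ proof predicate of the c.e.\ theory $T$, so $T \vdash \chi$ if and only if $\N \models \exists s\,\Prf_T(\gn\chi, s)$. We will use repeatedly that $\PA$ — indeed already $\mathsf{Q}$ — proves every true $\Delta_0$ sentence. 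By the parametrized fixed-point lemma, fix $\varphi_X(v)$ with
\[
  \PA \vdash \varphi_X(v) \leftrightarrow \exists s\,\bigl[\Prf_X(v,s) \wedge \forall t \le s\ \neg \Prf_T(\gn{\varphi_X(\dot v)},t)\bigr],
\]
where $\gn{\varphi_X(\dot v)}$ denotes a term representing $n \mapsto \gn{\varphi_X(\num n)}$. The claim is that $\varphi_X(v)$ weakly represents $X$ in $T$; note that $\varphi_X$ refers to proofs of $\varphi_X$ \emph{itself} (not of its negation) and that the length bound ``$\forall t \le s$'' is essential.

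First I would check ``$n \in X \Rightarrow T \vdash \varphi_X(\num n)$'', which in fact needs no consistency hypothesis. Let $s_0$ be least with $\Prf_X(\num n, s_0)$ true and suppose, toward a contradiction, that $T \nvdash \varphi_X(\num n)$. Then no number codes a $T$-proof of $\varphi_X(\num n)$, so $\forall t \le \num{s_0}\ \neg\Prf_T(\gn{\varphi_X(\num n)},t)$ is a true $\Delta_0$ sentence, hence provable in $T$; with the true $\Delta_0$ sentence $\Prf_X(\num n, \num{s_0})$ and the fixed point this gives $T \vdash \varphi_X(\num n)$, a contradiction.

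Next I would check ``$n \notin X \Rightarrow T \nvdash \varphi_X(\num n)$'', where consistency enters. Suppose $T \vdash \varphi_X(\num n)$ and let $t_0$ be least with $\Prf_T(\gn{\varphi_X(\num n)},t_0)$ true. Arguing in $T$: by the fixed point $T$ proves $\exists s\,[\Prf_X(\num n,s) \wedge \forall t \le s\ \neg\Prf_T(\gn{\varphi_X(\num n)},t)]$, while $T$ also proves the true $\Delta_0$ sentence $\Prf_T(\gn{\varphi_X(\num n)},\num{t_0})$; hence any such witness $s$ satisfies $s < \num{t_0}$, so $T \vdash \exists s < \num{t_0}\ \Prf_X(\num n, s)$. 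But $n \notin X$ makes $\forall s < \num{t_0}\ \neg\Prf_X(\num n, s)$ a true $\Delta_0$ sentence, hence provable in $T$ as well, so $T$ is inconsistent; as $T$ is consistent, $T \nvdash \varphi_X(\num n)$. Combining the two implications yields $n \in X$ iff $T \vdash \varphi_X(\num n)$, i.e.\ weak representability.

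The main obstacle is not the verifications — they are short — but pinning down the right fixed point. Naive candidates fail: a plain $\Sigma_1$ definition of $X$ need not be weakly representable once $T$ is $\Sigma_1$-unsound, and a genuine Rosser sentence comparing proofs of $\varphi_X(\num n)$ with proofs of $\neg\varphi_X(\num n)$ does not tie provability of $\varphi_X(\num n)$ to $n \in X$. One has to spot that self-reference to proofs of $\varphi_X$ \emph{itself}, truncated at the $X$-enumeration stage $s$, is exactly what converts a hypothetical spurious proof of $\varphi_X(\num n)$ for $n \notin X$ into an inconsistency of $T$. Everything else — the existence of $\Delta_0$ enumeration and proof predicates, provability of true $\Delta_0$ sentences, the parametrized fixed-point lemma — is standard.
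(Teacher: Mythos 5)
Your proof is correct. The paper states this theorem without proof (it only cites Ehrenfeucht--Feferman and Shepherdson), and your argument is the standard one: in the paper's witness-comparison notation from Definition \ref{WC}, your fixed point is exactly $\varphi_X(v) \leftrightarrow \bigl(\exists s\,\Prf_X(v,s)\bigr) \prec \PR_T(\gn{\varphi_X(\dot{v})})$, and both verifications (provable $\Delta_0$-completeness for the positive direction; consistency plus the numeral-bounded witness for the negative direction) go through as you describe.
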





Ehrenfeucht and Feferman's theorem is a metamathematical result, but is formalizable and also provable in $\PA$. 
This fact is called the \textit{FGH theorem} (see Smory\'nski \cite[p.366]{Smo}, Visser \cite[Theorem 4.1]{Vis84}, \cite[Section 3]{Vis05} and Lindstr\"om \cite[Exercise 2.26 (a)]{Lin}). 

\begin{thm}[The FGH theorem]
Let $T$ be any consistent c.e.~extension of $\PA$. 
Then, for any $\Sigma_1$ sentence $\sigma$, there exists a sentence $\varphi$ such that
\[
	\PA + \Con_T \vdash \sigma \leftrightarrow \PR_T(\gn{\varphi}).
\] 
\end{thm}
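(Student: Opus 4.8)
The plan is to use a Rosser-style diagonalization. First write $\sigma$ in the form $\exists x\,\theta(x)$ with $\theta$ a $\Delta_0$ formula (possible after a $\PA$-provable rewriting, which is all we need here). Then apply the fixed-point lemma to obtain a sentence $\varphi$ with
\[
	\PA\vdash\varphi\leftrightarrow\exists x\bigl(\theta(x)\wedge\forall y\le x\,\neg\Prf_T(y,\gn{\varphi})\bigr),
\]
and write $\psi$ for the right-hand side. Intuitively $\varphi$ says that $\sigma$ has a witness appearing no later than any $T$-proof of $\varphi$. Note that $\psi$ is a $\Sigma_1$ sentence and that $\PA$, hence $T$, proves $\varphi\leftrightarrow\psi$. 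I shall prove the two implications separately; in fact the forward one will not even require $\Con_T$.

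For $\PA\vdash\sigma\to\PR_T(\gn{\varphi})$: argue in $\PA$ under the hypothesis $\sigma$, and let $w$ be the least witness of $\theta$. If some $y\le w$ is a $T$-proof of $\varphi$, then $\PR_T(\gn{\varphi})$ holds outright. Otherwise $\theta(w)\wedge\forall y\le w\,\neg\Prf_T(y,\gn{\varphi})$ holds, so $\psi$ holds; since $\psi$ is $\Sigma_1$, provable $\Sigma_1$-completeness yields $\PR_T(\gn{\psi})$, and combining this with $\PR_T(\gn{\psi\to\varphi})$ gives $\PR_T(\gn{\varphi})$. (The self-referential twist is that the scenario ``$\sigma$ holds while $T$ does not prove $\varphi$ at all'' falls under the second alternative, hence forces $\PR_T(\gn{\varphi})$, and so cannot occur.)

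For $\PA+\Con_T\vdash\PR_T(\gn{\varphi})\to\sigma$: argue in $\PA$ under $\Con_T$ and $\PR_T(\gn{\varphi})$, and fix $p$ with $\Prf_T(p,\gn{\varphi})$. Provable $\Sigma_1$-completeness gives $\PR_T(\gn{\Prf_T(\num{p},\gn{\varphi})})$, while from $\PR_T(\gn{\varphi})$ together with $\PR_T(\gn{\varphi\to\psi})$ we obtain $\PR_T(\gn{\psi})$. Formalizing inside $T$ the remark that any witness $x$ of $\psi$ must satisfy $x<p$ (otherwise $\forall y\le x\,\neg\Prf_T(y,\gn{\varphi})$ clashes with $\Prf_T(\num{p},\gn{\varphi})$), we get $\PR_T(\gn{\exists x<\num{p}\,\theta(x)})$. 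But $\exists x<p\,\theta(x)$ is $\Delta_0$, so were it false, provable $\Sigma_1$-completeness would give $\PR_T$ of its negation, contradicting $\Con_T$. Hence $\exists x<p\,\theta(x)$, and a fortiori $\sigma$, holds.

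The hard part is the backward implication, and really the whole difficulty is concentrated in the choice of fixed point: the naive candidates --- $\varphi:=\sigma$, or $\varphi\leftrightarrow(\PR_T(\gn{\varphi})\to\sigma)$, or $\varphi\leftrightarrow\sigma\vee\neg\PR_T(\gn{\varphi})$ --- all break down because from $\PR_T(\gn{\varphi})$ one can only extract $\PR_T(\gn{\sigma})$, not $\sigma$ itself, and a consistent but $\Sigma_1$-unsound $T$ need not reflect $\Sigma_1$ truths. Replacing the unbounded clause $\neg\PR_T(\gn{\varphi})$ by the bounded Rosser comparison $\forall y\le x\,\neg\Prf_T(y,\gn{\varphi})$ is precisely what converts the derivation of $\PR_T(\gn{\sigma})$ into a derivation of $\PR_T(\gn{\exists x<\num{p}\,\theta(x)})$ for a concrete $p$, after which a single appeal to $\Con_T$ closes the argument.
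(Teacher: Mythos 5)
Your proof is correct and is essentially the standard argument for the FGH theorem that the paper cites from the literature (the paper states the theorem without proof): your fixed point is exactly $\varphi \leftrightarrow (\sigma \prec \PR_T(\gn{\varphi}))$ in the paper's witness-comparison notation, and both implications are carried out as in Smory\'nski/Visser/Lindstr\"om, with the backward direction correctly reducing to the $\Delta_0$ sentence $\exists x\,{<}\,\num{p}\,\theta(x)$ and a single appeal to $\Con_T$. The only point worth flagging is that the step from $\Prf_T(p,\gn{\varphi})$ to $\PR_T(\gn{\Prf_T(\dot{p},\gn{\varphi})})$ inside $\PA$ uses provable $\Sigma_1$-completeness with a free parameter, a standard strengthening of the sentence-level derivability condition the paper lists; alternatively one can reach the same conclusion by applying sentence-level $\Sigma_1$-completeness to the witness-comparison sentence $\PR_T(\gn{\varphi}) \pc \sigma$.
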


Here $\PR_T(x)$ is a natural $\Sigma_1$ provability predicate of $T$ which expresses the provability of $T$, and $\Con_T$ is the $\Pi_1$ sentence $\neg \PR_T(\gn{0=1})$ expressing the consistency of $T$. 
``FGH'' stands for Friedman, Goldfarb and Harrington. 
Harrington and Friedman pointed out that $\varphi$ in the statement is found as $\Pi_1$ and $\Sigma_1$, respectively. 
For a history of the FGH theorem, see Visser \cite{Vis05}. 
The FGH theorem has been used in the literature (for example, in papers by the author, it appears in \cite{KK,Kur18}).


A version of the FGH theorem with a parameter is also proved. 
That is, by modifying the usual proof of the FGH theorem, it is proved that for any $\Sigma_1$ formula $\sigma(v)$ with the only free variable $v$, there exists a formula $\varphi(v)$ such that $\PA + \Con_T$ proves $\forall v\, \bigl(\sigma(v) \leftrightarrow \PR_T(\gn{\varphi(\dot{v})}) \bigr)$. 
Here $\gn{\varphi(\dot{v})}$ is a primitive recursive term corresponding to a primitive recursive function calculating the G\"odel number of $\varphi(\num{n})$ from $n$. 
If $T$ is consistent, then the theory $\PA + \Con_T$ is sound, and hence the weak representability of c.e.~sets in $T$ follows from this parameterized version of the FGH theorem. 

In the present paper, we analyze the FGH theorem from two perspectives. 
The first perspective is the ``form'' of the sentence $\varphi$ in the statement of the FGH theorem. 
Let $\mathrm{True}_{\Sigma_1}(v)$ be a partial truth definition for $\Sigma_1$ sentences, that is, $\mathrm{True}_{\Sigma_1}(v)$ is a $\Sigma_1$ formula such that for any $\Sigma_1$ sentence $\sigma$, $\PA$ proves $\sigma \leftrightarrow \mathrm{True}_{\Sigma_1}(\gn{\sigma})$ (cf.~Lindstr\"om \cite[Fact 10]{Lin}). 
By the parameterized version of the FGH theorem, there exists a formula $\xi(v)$ such that $\PA + \Con_T \vdash \forall v \, \bigl(\mathrm{True}_{\Sigma_1}(v) \leftrightarrow \PR_T(\gn{\xi(\dot{v})}) \bigr)$. 
Then, for any $\Sigma_1$ sentence $\sigma$, $\PA + \Con_T \vdash \sigma \leftrightarrow \PR_T(\gn{\xi(\gn{\sigma})})$. 
Therefore, $\varphi$ in the FGH theorem can be taken in the form $\xi(\cdot)$ uniformly regardless of $\sigma$. 

From the above observation, we consider the following question: What form of the sentence $\varphi$ in the FGH theorem can we find?
In Section \ref{Classical}, we investigate this question in the framework of classical propositional logic. 
Specifically, we study the following rephrased question: What is a propositional formula $A$ such that $\varphi$ in the FGH theorem can be taken uniformly in the form $A$ regardless of $\sigma$?
In Section \ref{Modal}, we investigate this rephrased question in the framework of modal propositional logic. 
However, unlike the case of classical propositional logic, the formula $A$ may contain the modal operator $\Box$. 
Therefore, the interpretation of $\Box$ in arithmetic should be clearly defined. 
As is usually done in the study of provability logic, in the present paper, $\Box$ is interpreted by $\PR_T(x)$. 
Then, our proofs of theorems in Section \ref{Modal} are modifications of that of Solovay's arithmetical completeness theorem \cite{Sol} which is one of the most remarkable theorems in the research of provability logic. 

The second perspective is the choice of provability predicates. 
Joosten \cite{Joo} generalized the FGH theorem by proving similar statements concerning several nonstandard provability predicates such as a formula expressing the provability in $T$ together with all true $\Sigma_{n+2}$ sentences. 
In the last section, we study some variations of the FGH theorem with respect to Rosser provability predicates. 

Sections \ref{Classical}, \ref{Modal}, and \ref{Rosser} can be read independently.
We close the introduction with common preparations for reading these sections.
Let $\LA$ denote the language of first-order arithmetic containing the symbols $0, S, +, \times, \leq$. 
We do not specify what exactly $\LA$ is, but it may be assumed to have as many function symbols for primitive recursive functions as necessary. 
Throughout the present paper, we fix a consistent c.e.~extension $T$ of Peano Arithmetic $\PA$ in the language $\LA$. 
Let $\omega$ denote the set of all natural numbers. 
For each $n \in \omega$, let $\num{n}$ denote the numeral $S(S(\cdots S(0) \cdots))$ ($n$ times applications of $S$) for $n$. 
We fix a primitive recursive formula $\Proof_T(x, y)$ naturally expressing that ``$y$ is a $T$-proof of $x$''. 
Our $\Sigma_1$ provability predicate $\PR_T(x)$ of $T$ is defined by $\exists y\, \Proof_T(x, y)$, saying that ``$x$ is $T$-provable''. 
Then, we may assume that the provability predicate $\PR_T(x)$ satisfies the following clauses (see Boolos \cite{Boo}): 
\begin{itemize}
	\item If $T \vdash \varphi$, then $\PA \vdash \PR_T(\gn{\varphi})$, 
	\item $\PA \vdash \PR_T(\gn{\varphi \to \psi}) \to \bigl(\PR_T(\gn{\varphi}) \to \PR_T(\gn{\psi}) \bigr)$, 
	\item If $\varphi$ is a $\Sigma_1$ sentence, then $\PA \vdash \varphi \to \PR_T(\gn{\varphi})$. 
\end{itemize}
We may also assume that $\PA$ verifies that every theorem of $T$ has infinitely many proofs, that is, $\PA \vdash \forall x \forall y \, \bigl(\Proof_T(x, y) \to \exists z\, {>}\, y\, \Proof_T(x, z) \bigr)$. 

We introduce witness comparison notation (cf.~Lindstr\"om \cite[Lemma 1.3]{Lin}). 

\begin{defn}[Witness comparison notation]\label{WC}
Suppose that $\LA$-formulas $\varphi$ and $\psi$ are of the forms $\exists x \, \varphi'(x)$ and $\exists y \, \psi'(y)$, respectively. 
\begin{itemize}
	\item $\varphi \pc \psi$ is an abbreviation for $\exists x\, \bigl(\varphi'(x) \land \forall y\, {<}\, x \, \neg \psi'(y) \bigr)$.  
	\item $\varphi \prec \psi$ is an abbreviation for $\exists x\, \bigl(\varphi'(x) \land \forall y\, {\leq}\, x \, \neg \psi'(y) \bigr)$.  
\end{itemize}
\end{defn}

It is easily verified that $\PA$ proves the formulas $\neg (\varphi \pc \psi \land \psi \prec \varphi)$ and $\varphi \lor \psi \to (\varphi \pc \psi \lor \psi \prec \varphi)$.

\section{On the form of a sentence in the FGH theorem\\ -- the case of classical propositional logic}\label{Classical}

In this section, we investigate the following question mentioned in the introduction: What is a propositional formula $A$ such that $\varphi$ in the FGH theorem can be taken uniformly in the form $A$ regardless of $\sigma$?

The language of classical propositional logic consists of countably many propositional variables $p_1, p_2, \ldots, q_1, q_2, \ldots$, propositional constants $\top, \bot$, and propositional connectives $\land, \lor, \neg, \to, \leftrightarrow$. 
For each propositional formula $A$, let $\models A$ mean that $A$ is a tautology. 
We say $A$ is unsatisfiable if $\models \neg A$. 
We say that a propositional formula is \textit{contingent} if it is neither a tautology nor unsatisfiable. 

For any propositional formula $A(p_1, \ldots, p_n)$ containing only the indicated propositional variables and any $\LA$-sentences $\varphi_1, \ldots, \varphi_n$, let $A(\varphi_1, \ldots, \varphi_n)$ denote the $\LA$-sentence obtained by simultaneously replacing all the occurrences of $p_i$ in $A$ by $\varphi_i$, for each $i \in \{1, \ldots, n\}$.

We introduce the following sets in order to simplify our descriptions. 

\begin{defn}
For each $\Sigma_1$ sentence $\sigma$, let $\FGH_T(\sigma)$ be the set of all $\LA$-sentences $\varphi$ such that $\PA + \Con_T \vdash \sigma \leftrightarrow \Pr_T(\gn{\varphi})$. 
\end{defn}

Then, the FGH theorem states that for any $\Sigma_1$ sentence $\sigma$, the set $\FGH_T(\sigma)$ is non-empty. 
We show that the set $\FGH(\sigma)$ is closed under the $T$-provable equivalence. 

\begin{prop}\label{FGHeq}
For any $\Sigma_1$ sentence $\sigma$ and any $\LA$-sentences $\varphi$ and $\psi$, if $\varphi \in \FGH_T(\sigma)$ and $T \vdash \varphi \leftrightarrow \psi$, then $\psi \in \FGH_T(\sigma)$. 
\end{prop}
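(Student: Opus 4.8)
The plan is to reduce the claim to the provable equivalence of $\PR_T(\gn{\varphi})$ and $\PR_T(\gn{\psi})$ over $\PA$, using the first two of the three derivability-style clauses assumed for $\PR_T$ in the preliminaries. Since these clauses are available in $\PA$ alone, everything will go through a fortiori in $\PA + \Con_T$.

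First I would unpack the hypothesis $T \vdash \varphi \leftrightarrow \psi$ into the two implications $T \vdash \varphi \to \psi$ and $T \vdash \psi \to \varphi$. Applying the first clause (``if $T \vdash \chi$ then $\PA \vdash \PR_T(\gn{\chi})$'') gives $\PA \vdash \PR_T(\gn{\varphi \to \psi})$ and $\PA \vdash \PR_T(\gn{\psi \to \varphi})$. Then the second clause (the $\PA$-provable distribution of $\PR_T$ over implication) yields $\PA \vdash \PR_T(\gn{\varphi}) \to \PR_T(\gn{\psi})$ and $\PA \vdash \PR_T(\gn{\psi}) \to \PR_T(\gn{\varphi})$, hence
\[
	\PA \vdash \PR_T(\gn{\varphi}) \leftrightarrow \PR_T(\gn{\psi}).
\]

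Next I would combine this with the assumption $\varphi \in \FGH_T(\sigma)$, i.e.\ $\PA + \Con_T \vdash \sigma \leftrightarrow \PR_T(\gn{\varphi})$. Since $\PA + \Con_T$ extends $\PA$, it also proves $\PR_T(\gn{\varphi}) \leftrightarrow \PR_T(\gn{\psi})$, and chaining the two equivalences gives $\PA + \Con_T \vdash \sigma \leftrightarrow \PR_T(\gn{\psi})$, which is exactly $\psi \in \FGH_T(\sigma)$.

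There is no real obstacle here: the argument is a direct application of the first two derivability clauses together with propositional reasoning, and $\Con_T$ plays no active role beyond being carried along. The only thing to be mildly careful about is that we use only clauses (1) and (2) — not $\Sigma_1$-completeness — and that the equivalence $\PR_T(\gn{\varphi}) \leftrightarrow \PR_T(\gn{\psi})$ is established in $\PA$, so nothing about the consistency or soundness of $T$ is needed.
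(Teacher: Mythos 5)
Your proposal is correct and follows essentially the same route as the paper: derive $\PA \vdash \PR_T(\gn{\varphi}) \leftrightarrow \PR_T(\gn{\psi})$ from $T \vdash \varphi \leftrightarrow \psi$ via the first two derivability clauses, then chain with the assumed equivalence over $\PA + \Con_T$. The paper simply states the provable equivalence of the two provability assertions without spelling out the clause-by-clause derivation you give.
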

\begin{proof}
Suppose $\varphi \in \FGH_T(\sigma)$ and $T \vdash \varphi \leftrightarrow \psi$. 
Then, the equivalence $\PR_T(\gn{\varphi}) \leftrightarrow \PR_T(\gn{\psi})$ is provable in $\PA$. 
Since $\PA + \Con_T \vdash \sigma \leftrightarrow \PR_T(\gn{\varphi})$, we obtain $\PA + \Con_T \vdash \sigma \leftrightarrow \PR_T(\gn{\psi})$, and hence $\psi \in \FGH_T(\sigma)$. 
\end{proof}

First, we prove the following introductory theorem. 

\begin{thm}\label{Thm1}
Let $A(p_1, \ldots, p_n)$ be any propositional formula with only the indicated propositional variables. 
Then, the following are equivalent: 
\begin{enumerate}
	\item $A(p_1, \ldots, p_n)$ is contingent. 
	\item For any $\Sigma_1$ sentence $\sigma$, there exist $\LA$-sentences $\varphi_1, \ldots, \varphi_n$ such that $A(\varphi_1, \ldots, \varphi_n) \in \FGH_T(\sigma)$. 
\end{enumerate}
\end{thm}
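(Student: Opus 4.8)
The plan is to prove the two implications separately; the direction $(2)\Rightarrow(1)$ is short and proceeds via the soundness of $\PA+\Con_T$, while the content sits in $(1)\Rightarrow(2)$. For $(2)\Rightarrow(1)$ I would argue contrapositively: if $A$ is not contingent it is either a tautology or unsatisfiable. If $A$ is a tautology, then for all $\LA$-sentences $\varphi_1,\dots,\varphi_n$ the instance $A(\varphi_1,\dots,\varphi_n)$ is again a tautology, hence $\PA\vdash A(\varphi_1,\dots,\varphi_n)$ and therefore $\PA\vdash\PR_T(\gn{A(\varphi_1,\dots,\varphi_n)})$; thus $A(\varphi_1,\dots,\varphi_n)\in\FGH_T(0=\num{1})$ would give $\PA+\Con_T\vdash 0=\num{1}$, which is impossible because $\N\models\PA+\Con_T$ (here one uses that $T$ is genuinely consistent). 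So $(2)$ already fails for the $\Sigma_1$ sentence $0=\num{1}$. If $A$ is unsatisfiable, then each $\neg A(\varphi_1,\dots,\varphi_n)$ is a tautology, so $\PA\vdash\PR_T(\gn{\neg A(\varphi_1,\dots,\varphi_n)})$; since $A(\varphi_1,\dots,\varphi_n)\to(\neg A(\varphi_1,\dots,\varphi_n)\to 0=\num{1})$ is a tautology and hence $T$-provable, two applications of the second derivability condition give $\PA\vdash\PR_T(\gn{A(\varphi_1,\dots,\varphi_n)})\to\neg\Con_T$, i.e.\ $\PA+\Con_T\vdash\neg\PR_T(\gn{A(\varphi_1,\dots,\varphi_n)})$; then $A(\varphi_1,\dots,\varphi_n)\in\FGH_T(0=0)$ would make $\PA+\Con_T$ inconsistent. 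So $(2)$ fails for $0=0$, completing this direction.

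For $(1)\Rightarrow(2)$, assume $A(p_1,\dots,p_n)$ is contingent and fix a truth assignment $v$ to $p_1,\dots,p_n$ that satisfies $A$ and one $w$ that falsifies $A$. Let $\sigma$ be any $\Sigma_1$ sentence and, by the FGH theorem, choose $\rho$ with $\rho\in\FGH_T(\sigma)$. The idea is to pick $\varphi_1,\dots,\varphi_n$ so that $T\vdash A(\varphi_1,\dots,\varphi_n)\leftrightarrow\rho$, and then invoke Proposition \ref{FGHeq}. Concretely, for each $i$ set $\varphi_i$ to be $0=0$ if $v$ and $w$ both make $p_i$ true; $0=\num{1}$ if both make $p_i$ false; $\rho$ if $v$ makes $p_i$ true and $w$ makes it false; and $\neg\rho$ if $v$ makes $p_i$ false and $w$ makes it true. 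Then, reasoning in $\PA$: under the hypothesis $\rho$, each $\varphi_i$ receives the truth value $v(p_i)$, so $A(\varphi_1,\dots,\varphi_n)$ receives the value of $A$ under $v$, namely true; under $\neg\rho$, each $\varphi_i$ receives the value $w(p_i)$, so $A(\varphi_1,\dots,\varphi_n)$ receives the value of $A$ under $w$, namely false. Hence $\PA\vdash A(\varphi_1,\dots,\varphi_n)\leftrightarrow\rho$, so $T\vdash A(\varphi_1,\dots,\varphi_n)\leftrightarrow\rho$, and Proposition \ref{FGHeq} yields $A(\varphi_1,\dots,\varphi_n)\in\FGH_T(\sigma)$.

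I do not expect a serious obstacle. The one point that needs attention is the correctness of the substitution in $(1)\Rightarrow(2)$: one must check that reading $\rho$ as true turns the chosen $\varphi_i$ into exactly the assignment $v$, and reading $\rho$ as false turns them into exactly $w$ — and contingency of $A$ is precisely what guarantees that both $v$ and $w$ are available (and, since $v$ and $w$ disagree on the value of $A$, they disagree on some $p_i$, so at least one $\varphi_i$ is $\rho$ or $\neg\rho$). Everything else — the derivability conditions for $\PR_T$ recalled in the introduction, the soundness of $\PA+\Con_T$, and Proposition \ref{FGHeq} — is routine.
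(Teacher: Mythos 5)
Your proof is correct and follows essentially the same route as the paper: your case-split definition of $\varphi_i$ is exactly the paper's Lemma \ref{Lem1} formula $B_i(r) = (r \land \top^{f(i)}) \lor (\neg r \land \top^{g(i)})$ written out explicitly, followed by the same appeal to the FGH theorem and Proposition \ref{FGHeq}. The only (harmless) divergence is in $(2)\Rightarrow(1)$, where you use the concrete sentences $0=0$ and $0=\num{1}$ in the two cases rather than a single $\Sigma_1$ sentence independent of $\PA+\Con_T$ as the paper does.
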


For each formula $A$, let $A^0$ and $A^1$ be $\neg A$ and $A$, respectively. 
Theorem \ref{Thm1} follows from the following lemma. 

\begin{lem}\label{Lem1}
Let $A(p_1, \ldots, p_n)$ be any propositional formula with only the indicated propositional variables and let $r$ be any propositional variable not contained in $A$. 
If $A(p_1, \ldots, p_n)$ is contingent, then there exist propositional formulas $B_1(r), \ldots, B_n(r)$ such that $\models r \leftrightarrow A(B_1(r), \ldots, B_n(r))$. 
\end{lem}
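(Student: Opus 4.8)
The plan is to use the fact that $A$ is contingent to locate two specific truth assignments: one making $A$ true and one making $A$ false. Concretely, since $A(p_1,\dots,p_n)$ is contingent, there are truth values $a_1,\dots,a_n \in \{0,1\}$ with $\models A(p_1^{a_1},\dots,p_n^{a_n}) \to \top$ in the trivial sense that the conjunction $\bigwedge_i p_i^{a_i}$ forces $A$ to be true, and similarly truth values $b_1,\dots,b_n \in \{0,1\}$ forcing $A$ to be false. (Here I am using the notation $A^0 = \neg A$, $A^1 = A$ introduced just before the lemma, so $p_i^{a_i}$ denotes $p_i$ or $\neg p_i$ according to $a_i$.) The idea is then to define each $B_i(r)$ so that it behaves like $a_i$ when $r$ is true and like $b_i$ when $r$ is false, so that $A(B_1(r),\dots,B_n(r))$ collapses to $A$ evaluated at the first assignment when $r$ holds (giving $\top$, i.e.\ matching $r$) and to $A$ evaluated at the second assignment when $r$ fails (giving $\bot$, again matching $r$).

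The concrete choice I would make is
\[
	B_i(r) := (r \land p_i^{a_i}) \lor (\neg r \land p_i^{b_i}),
\]
but wait — $B_i$ is supposed to contain only the variable $r$, not $p_i$. So instead I should simply set $B_i(r)$ to be one of $r$, $\neg r$, $\top$, or $\bot$ depending on the pair $(a_i, b_i)$: if $a_i = 1$ and $b_i = 0$ take $B_i(r) := r$; if $a_i = 0$ and $b_i = 1$ take $B_i(r) := \neg r$; if $a_i = b_i = 1$ take $B_i(r) := \top$; and if $a_i = b_i = 0$ take $B_i(r) := \bot$. Then under any assignment where $r$ is true, each $B_i(r)$ evaluates to $a_i$, so $A(B_1(r),\dots,B_n(r))$ evaluates to $A(a_1,\dots,a_n) = 1 = r$; and under any assignment where $r$ is false, each $B_i(r)$ evaluates to $b_i$, so $A(B_1(r),\dots,B_n(r)) = A(b_1,\dots,b_n) = 0 = r$. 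In both cases the biconditional $r \leftrightarrow A(B_1(r),\dots,B_n(r))$ holds, which is exactly $\models r \leftrightarrow A(B_1(r),\dots,B_n(r))$.

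I would organize the writeup by first extracting from contingency the two assignments $(a_1,\dots,a_n)$ and $(b_1,\dots,b_n)$ with $A(a_1,\dots,a_n)=1$ and $A(b_1,\dots,b_n)=0$, then defining the $B_i$ by the four-case rule above, and finally verifying the tautology by splitting on the truth value of $r$ — a short truth-table argument invoking the substitution convention for $A(\varphi_1,\dots,\varphi_n)$ adapted to propositional substitution. There is essentially no obstacle here: the only thing to be a little careful about is the bookkeeping of the substitution notation and making sure $r$ genuinely does not occur in $A$ (guaranteed by hypothesis) so that the evaluation of $B_i(r)$ only depends on the value assigned to $r$. The step most worth stating cleanly is the evaluation lemma — that for any assignment, the value of $A(B_1(r),\dots,B_n(r))$ equals the value of $A$ under the assignment $p_i \mapsto$ (value of $B_i(r)$) — but this is just the standard compositionality of propositional evaluation, so it needs only a sentence.
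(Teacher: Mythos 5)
Your proposal is correct and takes essentially the same approach as the paper: extract a satisfying and a falsifying assignment from contingency, and define each $B_i(r)$ to evaluate to the $i$-th bit of the first assignment when $r$ is true and to the $i$-th bit of the second when $r$ is false. The paper packages your four-case definition into the single formula $(r \land \top^{f(i)}) \lor (\neg r \land \top^{g(i)})$, which is logically the same thing.
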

\begin{proof}
Suppose that $A(p_1, \ldots, p_n)$ is contingent. 
Let $f$ and $g$ be mappings from $\{1, \ldots, n\}$ to $\{0, 1\}$ such that $\models A(\top^{f(1)}, \ldots, \top^{f(n)})$ and $\models \neg A(\top^{g(1)}, \ldots, \top^{g(n)})$. 
For each $i$ ($1 \leq i \leq n$), let $B_i(r)$ be the propositional formula $(r \land \top^{f(i)}) \lor (\neg r \land \top^{g(i)})$. 
Then, $\models r \to (B_i(r) \leftrightarrow \top^{f(i)})$ and $\models \neg r \to (B_i(r) \leftrightarrow \top^{g(i)})$ hold. 
Thus, we have 
\[
	\models r \to \bigl(A(B_1(r), \ldots, B_n(r)) \leftrightarrow A(\top^{f(1)}, \ldots, \top^{f(n)}) \bigr)
\]
and
\[
	\models \neg r \to \bigl(A(B_1(r), \ldots, B_n(r)) \leftrightarrow A(\top^{g(1)}, \ldots, \top^{g(n)}) \bigr). 
\]
Since $\models A(\top^{f(1)}, \ldots, \top^{f(n)})$ and $\models \neg A(\top^{g(1)}, \ldots, \top^{g(n)})$, we obtain 
\[
	\models r \to A(B_1(r), \ldots, B_n(r))\ \text{and}\ \models \neg r \to \neg A(B_1(r), \ldots, B_n(r)). \tag*{\mbox{\qedhere}}
\] 
\end{proof}

\begin{proof}[Proof of Theorem \ref{Thm1}]
$(1 \Rightarrow 2)$: 
Suppose that $A(p_1, \ldots, p_n)$ is contingent and let $\sigma$ be any $\Sigma_1$ sentence. 
Then, by Lemma \ref{Lem1}, there exist propositional formulas $B_1(r), \ldots, B_n(r)$ such that 
\begin{equation}\label{equiv0}
	\models r \leftrightarrow A(B_1(r), \ldots, B_n(r)).
\end{equation} 
By the FGH theorem, there exists an $\LA$-sentence $\varphi \in \FGH_T(\sigma)$. 
For each $i$ ($1 \leq i \leq n$), let $\varphi_i$ be the $\LA$-sentence $B_i(\varphi)$. 
Then, by the equivalence (\ref{equiv0}), we obtain
\[
	\PA \vdash \varphi \leftrightarrow A(\varphi_1, \ldots, \varphi_n).
\] 
By Proposition \ref{FGHeq}, we conclude $A(\varphi_1, \ldots, \varphi_n) \in \FGH_T(\sigma)$ by Proposition \ref{FGHeq}. 

$(2 \Rightarrow 1)$: 
Suppose that $A(p_1, \ldots, p_n)$ is not contingent. 
Then, for any $\LA$-sentences $\varphi_1, \ldots, \varphi_n$, either $A(\varphi_1, \ldots, \varphi_n)$ or $\neg A(\varphi_1, \ldots, \varphi_n)$ is provable in $\PA$. 
Let $\sigma$ be any $\Sigma_1$ sentence independent of $\PA + \Con_T$. 
Then, $A(\varphi_1, \ldots, \varphi_n)$ is not in $\FGH_T(\sigma)$ for all $\LA$-sentences $\varphi_1, \ldots, \varphi_n$. 
\end{proof}

As mentioned in the introduction, the FGH theorem and the weak representability theorem are related to each other. 
In particular, we can also prove the following theorem by a similar proof.

\begin{thm}\label{Thm1'}
Let $A(p_1, \ldots, p_n)$ be any propositional formula with only the indicated propositional variables. 
Then, the following are equivalent: 
\begin{enumerate}
	\item $A(p_1, \ldots, p_n)$ is contingent. 
	\item For any c.e.~set $X$, there exist $\LA$-formulas $\varphi_1(v), \ldots, \varphi_n(v)$ such that $A(\varphi_1(v), \ldots, \varphi_n(v))$ weakly represents $X$ in $T$. 
\end{enumerate}
\end{thm}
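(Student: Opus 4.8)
The plan is to mirror the proof of Theorem \ref{Thm1}, replacing the FGH theorem by the Ehrenfeucht--Feferman weak representability theorem (Theorem \ref{EFThm}) and Proposition \ref{FGHeq} by the analogous closure fact for weak representability. The key observation is that weak representability is preserved under $\PA$-provable equivalence: if $A(\varphi_1(v),\dots,\varphi_n(v))$ and $\psi(v)$ satisfy $\PA \vdash \forall v\,\bigl(A(\varphi_1(v),\dots,\varphi_n(v)) \leftrightarrow \psi(v)\bigr)$, then since $T$ extends $\PA$ we get $T \vdash A(\varphi_1(\num{k}),\dots,\varphi_n(\num{k}))$ iff $T \vdash \psi(\num{k})$ for every $k$, so the two formulas weakly represent the same set. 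This is the exact analogue of the role played by Proposition \ref{FGHeq} and Lemma \ref{Lem1} together.

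For the direction $(1 \Rightarrow 2)$, suppose $A(p_1,\dots,p_n)$ is contingent and let $X$ be any c.e.\ set. By Theorem \ref{EFThm} there is a formula $\chi(v)$ weakly representing $X$ in $T$. By Lemma \ref{Lem1}, choose propositional formulas $B_1(r),\dots,B_n(r)$ with $\models r \leftrightarrow A(B_1(r),\dots,B_n(r))$, and set $\varphi_i(v) := B_i(\chi(v))$, reading the Boolean operations of $B_i$ as the corresponding $\LA$-connectives applied to the single ``atom'' $\chi(v)$. Since the propositional equivalence $\models r \leftrightarrow A(B_1(r),\dots,B_n(r))$ is a tautology, substituting $\chi(v)$ for $r$ gives $\PA \vdash \forall v\,\bigl(\chi(v) \leftrightarrow A(\varphi_1(v),\dots,\varphi_n(v))\bigr)$. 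By the closure fact above, $A(\varphi_1(v),\dots,\varphi_n(v))$ weakly represents $X$ in $T$, as required.

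For the direction $(2 \Rightarrow 1)$, suppose $A(p_1,\dots,p_n)$ is not contingent; say $A$ is a tautology (the unsatisfiable case is symmetric). Then for every choice of $\LA$-formulas $\varphi_1(v),\dots,\varphi_n(v)$, the formula $A(\varphi_1(v),\dots,\varphi_n(v))$ is provable in $\PA$, hence in $T$, so $T \vdash A(\varphi_1(\num{k}),\dots,\varphi_n(\num{k}))$ for every $k$; thus $A(\varphi_1(v),\dots,\varphi_n(v))$ can only weakly represent the set $\omega$ of all natural numbers. Choosing $X$ to be any c.e.\ set other than $\omega$ (for instance $X = \emptyset$) shows that $(2)$ fails. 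If $A$ is unsatisfiable, the same argument with $T \nvdash A(\cdots)$ for all substitutions shows $A$ can only weakly represent $\emptyset$, so taking $X = \omega$ refutes $(2)$.

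I do not expect a genuine obstacle here: the whole argument is a routine transfer of the proof of Theorem \ref{Thm1} along the dictionary ``FGH theorem $\leadsto$ Theorem \ref{EFThm}'' and ``Proposition \ref{FGHeq} $\leadsto$ closure of weak representability under $\PA$-provable equivalence.'' The only point requiring minor care is the second direction, where one must separate the tautology and unsatisfiable cases and exhibit, in each, a c.e.\ set witnessing the failure of $(2)$ --- but both $\emptyset$ and $\omega$ are manifestly c.e., so this is immediate.
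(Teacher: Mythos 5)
Your proposal is correct and is exactly the ``similar proof'' the paper alludes to: it transfers the argument for Theorem \ref{Thm1} by replacing the FGH theorem with Theorem \ref{EFThm}, Proposition \ref{FGHeq} with closure of weak representability under $\PA$-provable equivalence, and the independent $\Sigma_1$ sentence in the converse direction with a suitable choice of c.e.\ set ($\emptyset$ or $\omega$), with Lemma \ref{Lem1} playing the same role in both. No gaps.
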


Theorem \ref{Thm1} will be extended to the framework of modal propositional logic in the next section. 
In this section, we further improve Theorem \ref{Thm1} in the framework of classical propositional logic. 
For any propositional formula $A(p_1, \ldots, p_n, q_1, \ldots, q_m)$ with the only indicated propositional variables, let $\sFGH_T[A; q_1, \ldots, q_m]$ denote the metamathematical statement ``for any $\LA$-sentences $\psi_1, \ldots, \psi_m$ and for any $\Sigma_1$ sentence $\sigma$, there exist $\LA$-sentences $\varphi_1, \ldots, \varphi_n$ such that $A(\varphi_1, \ldots, \varphi_n, \psi_1, \ldots, \psi_m) \in \FGH_T(\sigma)$'', and we provide a necessary and sufficient condition for $\sFGH_T[A; q_1, \ldots, q_m]$. 
From this, we obtain more detailed information about the elements of $\FGH(\sigma)$ and the first incompleteness theorem (see Corollary \ref{FI1}).

Let $\mathcal{F}_m$ denote the set of all functions $f : \{1, \ldots, m\} \to \{0, 1\}$. 
We prove the following theorem which is one of main theorems of the present paper.

\begin{thm}\label{Thm2}
For any propositional formula $A(p_1, \ldots, p_n, q_1, \ldots, q_m)$, the following are equivalent: 
\begin{enumerate}
	\item For all $f \in \mathcal{F}_m$, $A(p_1, \ldots, p_n, \top^{f(1)}, \ldots, \top^{f(m)})$ are contingent. 
	\item $\sFGH_T[A; q_1, \ldots, q_m]$ holds. 
\end{enumerate}
\end{thm}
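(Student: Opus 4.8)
The plan is to reduce Theorem \ref{Thm2} to the already-established Theorem \ref{Thm1} (equivalently Lemma \ref{Lem1}) by case analysis on the values of the parameter variables $q_1, \dots, q_m$. For the direction $(1 \Rightarrow 2)$, suppose that $A(p_1, \dots, p_n, \top^{f(1)}, \dots, \top^{f(m)})$ is contingent for every $f \in \mathcal{F}_m$. Fix arbitrary $\LA$-sentences $\psi_1, \dots, \psi_m$ and a $\Sigma_1$ sentence $\sigma$. The idea is to combine the finitely many formulas $B_i^f(r)$ supplied by Lemma \ref{Lem1} for each $f$ into a single family of substitution formulas, guarded by which ``truth pattern'' the $\psi_j$ happen to realize. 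Concretely, for each $f \in \mathcal{F}_m$ let $\delta_f$ be the conjunction $\bigwedge_{j=1}^m \psi_j^{f(j)}$ (so $\delta_f$ asserts that the $\psi_j$ have exactly the truth values prescribed by $f$), and note that $\PA$ proves $\bigvee_{f \in \mathcal{F}_m} \delta_f$ and that the $\delta_f$ are pairwise exclusive. By Lemma \ref{Lem1} applied to the contingent formula $A(p_1,\dots,p_n,\top^{f(1)},\dots,\top^{f(m)})$, choose propositional formulas $B_1^f(r), \dots, B_n^f(r)$ with $\models r \leftrightarrow A(B_1^f(r), \dots, B_n^f(r), \top^{f(1)}, \dots, \top^{f(m)})$.

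Next I would take $\varphi \in \FGH_T(\sigma)$ from the FGH theorem, and for each $i$ define the $\LA$-sentence
\[
	\varphi_i := \bigvee_{f \in \mathcal{F}_m} \bigl( \delta_f \land B_i^f(\varphi) \bigr).
\]
The key computation is then that, working in $\PA$ and splitting into the (provably exhaustive, mutually exclusive) cases $\delta_f$, within each case $\varphi_i$ is $\PA$-equivalent to $B_i^f(\varphi)$ and each $\psi_j$ is $\PA$-equivalent to $\top^{f(j)}$, so $A(\varphi_1, \dots, \varphi_n, \psi_1, \dots, \psi_m)$ is $\PA$-equivalent to $A(B_1^f(\varphi), \dots, B_n^f(\varphi), \top^{f(1)}, \dots, \top^{f(m)})$, which in turn is $\PA$-equivalent to $\varphi$ by the chosen tautology. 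Hence $\PA \vdash \varphi \leftrightarrow A(\varphi_1, \dots, \varphi_n, \psi_1, \dots, \psi_m)$, and Proposition \ref{FGHeq} gives $A(\varphi_1, \dots, \varphi_n, \psi_1, \dots, \psi_m) \in \FGH_T(\sigma)$, as required.

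For the converse $(2 \Rightarrow 1)$, I would argue by contraposition. Suppose $A(p_1, \dots, p_n, \top^{g(1)}, \dots, \top^{g(m)})$ fails to be contingent for some $g \in \mathcal{F}_m$; that is, it is a tautology or unsatisfiable. Instantiate the parameters by $\psi_j := \top^{g(j)}$ (i.e. $\top$ or $\bot$ according to $g(j)$). Then for any $\LA$-sentences $\varphi_1, \dots, \varphi_n$, the sentence $A(\varphi_1, \dots, \varphi_n, \psi_1, \dots, \psi_m)$ is a propositional substitution instance of the non-contingent propositional formula $A(p_1, \dots, p_n, \top^{g(1)}, \dots, \top^{g(m)})$, hence $\PA$ proves it or proves its negation. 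Choosing $\sigma$ to be a $\Sigma_1$ sentence independent of $\PA + \Con_T$ (for instance a Rosser-style sentence, exactly as in the proof of Theorem \ref{Thm1}), we see that $A(\varphi_1, \dots, \varphi_n, \psi_1, \dots, \psi_m) \notin \FGH_T(\sigma)$ for all $\varphi_1, \dots, \varphi_n$, so $\sFGH_T[A; q_1, \dots, q_m]$ fails.

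The only genuinely delicate point is the bookkeeping in the forward direction: one must be careful that the case-split formulas $\delta_f$ really are $\PA$-provably exhaustive and disjoint, and that substituting a disjunction $\varphi_i = \bigvee_f (\delta_f \land B_i^f(\varphi))$ into $A$ behaves correctly under the case assumption $\delta_f$ — this is where a clean lemma about propositional substitution under a $\PA$-provable case hypothesis (or simply a direct induction on the structure of $A$) keeps the argument honest. Everything else is a routine adaptation of the proof of Theorem \ref{Thm1}.
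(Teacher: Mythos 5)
Your proposal is correct and follows essentially the same route as the paper: your case formulas $\delta_f$ and the sentences $\varphi_i = \bigvee_f (\delta_f \land B_i^f(\varphi))$ are precisely the arithmetical instances of the paper's $Q^f(\vec{q})$ and $B_i(r,\vec{q})$ from Lemma \ref{Lem2}, and your converse direction matches the paper's verbatim. The only (cosmetic) difference is that the paper packages the case analysis as a purely propositional tautology before substituting $\LA$-sentences, whereas you carry it out directly inside $\PA$.
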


Theorem \ref{Thm2} also follows from the following lemma that is an improvement of Lemma \ref{Lem1}. 
For the sake of simplicity, we sometimes abbreviate the tuples $p_1, \ldots, p_n$ and $q_1, \ldots, q_m$ as $\vec{p}$ and $\vec{q}$, respectively.  

\begin{lem}\label{Lem2}
Let $A(\vec{p}, \vec{q})$ be any propositional formula with only the indicated propositional variables and let $r$ be any propositional variable not contained in $A$. 
If $A(\vec{p}, \top^{f(1)}, \ldots, \top^{f(m)})$ is contingent for all $f \in \mathcal{F}_m$, then there exist propositional formulas $B_1(r, \vec{q}), \ldots, B_n(r, \vec{q})$ such that
\[
	\models r \leftrightarrow A(B_1(r, \vec{q}), \ldots, B_n(r, \vec{q}), \vec{q}).
\] 
\end{lem}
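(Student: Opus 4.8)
The plan is to lift the construction from the proof of Lemma~\ref{Lem1} by a ``definition by cases'' on the truth values of the parameters $\vec q$. For each $f \in \mathcal{F}_m$, the formula $A(\vec p, \top^{f(1)}, \ldots, \top^{f(m)})$ is contingent by hypothesis, so, exactly as in the proof of Lemma~\ref{Lem1}, there are functions $a_f, b_f \colon \{1, \ldots, n\} \to \{0,1\}$ with
\[
	\models A(\top^{a_f(1)}, \ldots, \top^{a_f(n)}, \top^{f(1)}, \ldots, \top^{f(m)}) \quad\text{and}\quad \models \neg A(\top^{b_f(1)}, \ldots, \top^{b_f(n)}, \top^{f(1)}, \ldots, \top^{f(m)}).
\]

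Next I would introduce, for each $f \in \mathcal{F}_m$, the ``selector'' formula $\delta_f(\vec q) := q_1^{f(1)} \land \cdots \land q_m^{f(m)}$, which under any truth assignment is true precisely when the values of $q_1, \ldots, q_m$ are those prescribed by $f$; in particular the $\delta_f$ are pairwise incompatible and their disjunction is a tautology. Then, for each $i \in \{1, \ldots, n\}$, I set
\[
	B_i(r, \vec q) := \bigvee_{f \in \mathcal{F}_m} \Bigl( \delta_f(\vec q) \land \bigl( (r \land \top^{a_f(i)}) \lor (\neg r \land \top^{b_f(i)}) \bigr) \Bigr),
\]
so that $B_i$ is a straightforward elaboration of the formula $(r \land \top^{f(i)}) \lor (\neg r \land \top^{g(i)})$ used in Lemma~\ref{Lem1}, with the choice of witnessing assignment now made to depend on the observed values of $\vec q$.

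To verify $\models r \leftrightarrow A(B_1(r, \vec q), \ldots, B_n(r, \vec q), \vec q)$, I would fix an arbitrary truth assignment and let $f_0 \in \mathcal{F}_m$ record the values it assigns to $\vec q$. Then only the disjunct indexed by $f_0$ is satisfied in each $B_i$, so $B_i$ gets the truth value $a_{f_0}(i)$ if $r$ is true and $b_{f_0}(i)$ if $r$ is false; substituting these into $A$ (and remembering that $q_j$ gets the value $f_0(j)$) shows that $A(B_1(r,\vec q), \ldots, B_n(r, \vec q), \vec q)$ receives the same truth value as the closed formula $A(\top^{a_{f_0}(1)}, \ldots, \top^{a_{f_0}(n)}, \top^{f_0(1)}, \ldots, \top^{f_0(m)})$ when $r$ is true and as $A(\top^{b_{f_0}(1)}, \ldots, \top^{b_{f_0}(n)}, \top^{f_0(1)}, \ldots, \top^{f_0(m)})$ when $r$ is false. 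By the two displayed tautologies the former is true and the latter is false, which in both cases agrees with the value of $r$.

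The construction is essentially forced once one sees that the parameters $\vec q$ must be handled by a case split, so the only real work is bookkeeping: checking that substitution of the $B_i$'s into $A$ commutes with evaluation under a fixed assignment, and that $\delta_{f_0}$ genuinely isolates a single disjunct. The one point I would be careful to highlight is that the hypothesis is needed for \emph{every} $f \in \mathcal{F}_m$ — in contrast with Lemma~\ref{Lem1}, where a single pair of witnessing assignments suffices — precisely because the values of the parameters are not under our control.
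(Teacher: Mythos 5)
Your proof is correct and follows essentially the same route as the paper: the selector formulas $\delta_f(\vec q)$ are exactly the paper's $Q^f(\vec q)$, and your $B_i$ is the paper's $B_i(r,\vec q) = \bigvee_{f}\bigl(C_i^f(r)\land Q^f(\vec q)\bigr)$ with the formulas $C_i^f(r)$ supplied by Lemma~\ref{Lem1} written out explicitly as $(r\land\top^{a_f(i)})\lor(\neg r\land\top^{b_f(i)})$. The only cosmetic difference is that you inline the construction from Lemma~\ref{Lem1} and verify the equivalence by evaluating under an arbitrary assignment, whereas the paper cites Lemma~\ref{Lem1} as a black box and argues via tautological implications under each $Q^g(\vec q)$.
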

\begin{proof}
For each $f \in \mathcal{F}_m$, let $Q^f(\vec{q})$ denote the formula $q_1^{f(1)} \land \cdots \land q_m^{f(m)}$. 
Notice that for each distinct elements $f, g$ of $\mathcal{F}_m$, $\models \neg \bigl(Q^f(\vec{q}) \land Q^g(\vec{q}) \bigr)$. 
Also $\models \bigvee_{f \in \mathcal{F}_m} Q^f(\vec{q})$. 

Suppose that $A(\vec{p}, \top^{f(1)}, \ldots, \top^{f(m)})$ is contingent for all $f \in \mathcal{F}_m$. 
For each $g \in \mathcal{F}_m$, there exist propositional formulas $C_1^g(r), \ldots, C_n^g(r)$ such that 
\begin{equation}\label{equiv}
	\models r \leftrightarrow A(C_1^g(r), \ldots, C_n^g(r), \top^{g(1)}, \ldots, \top^{g(m)})
\end{equation}
by Lemma \ref{Lem1}. 
For each $i$ ($1 \leq i \leq n$), let $B_i(r, \vec{q})$ be the propositional formla $\bigvee_{f \in \mathcal{F}_m} \bigl(C_i^f(r) \land Q^f(\vec{q}) \bigr)$. 
Then, $\models Q^g(\vec{q}) \to \bigl(B_i(r, \vec{q}) \leftrightarrow C_i^g(r) \bigr)$ and $\models Q^g(\vec{q}) \to (q_i \leftrightarrow \top^{g(i)})$. 
By the equivalence (\ref{equiv}), we obtain 
\[
	\models Q^g(\vec{q}) \to \bigl(r \leftrightarrow A(B_1(r, \vec{q}), \ldots, B_n(r, \vec{q}), \vec{q}) \bigr). 
\]
Then, 
\[
	\models \bigvee_{f \in \mathcal{F}_m} Q^f(\vec{q}) \to \bigl(r \leftrightarrow A(B_1(r, \vec{q}), \ldots, B_n(r, \vec{q}), \vec{q}) \bigr). 
\]
Since $\models \bigvee_{f \in \mathcal{F}_m} Q^f(\vec{q})$, we conclude
\[
	\models r \leftrightarrow A(B_1(r, \vec{q}), \ldots, B_n(r, \vec{q}), \vec{q}). \tag*{\mbox{\qedhere}}
\]
\end{proof}

\begin{proof}[Proof of Theorem \ref{Thm2}]
$(1 \Rightarrow 2)$: 
Suppose that $A(\vec{p}, \top^{f(1)}, \ldots, \top^{f(m)})$ is contingent for all $f \in \mathcal{F}_m$. 
Let $\psi_1, \ldots, \psi_m$ be any $\LA$-sentences and let $\sigma$ be any $\Sigma_1$ sentence. 
By Lemma \ref{Lem2}, there exist propositional formulas $B_1(r, \vec{q}), \ldots, B_n(r, \vec{q})$ such that
\begin{equation}\label{equiv3}
	\models r \leftrightarrow A(B_1(r, \vec{q}), \ldots, B_n(r, \vec{q}), \vec{q}).
\end{equation}
By the FGH theorem, there exists an $\LA$-sentence $\varphi \in \FGH_T(\sigma)$. 
For each $i$ ($1 \leq i \leq n$), let $\varphi_i$ be the $\LA$-sentence $B_i(\varphi, \psi_1, \ldots, \psi_m)$. 
Then, by the equivalence (\ref{equiv3}), 
\[
	\PA \vdash \varphi \leftrightarrow A(\varphi_1, \ldots, \varphi_n, \psi_1, \ldots, \psi_m).
\]
By Proposition \ref{FGHeq}, we have $A(\varphi_1, \ldots, \varphi_n, \psi_1, \ldots, \psi_m) \in \FGH_T(\sigma)$. 
Therefore, we conclude that $\sFGH_T[A; \vec{q}]$ holds. 

$(2 \Rightarrow 1)$: Suppose that $A(p_1, \ldots, p_n, \top^{f(1)}, \ldots, \top^{f(m)})$ is either a tautology or unsatisfiable for some $f \in \mathcal{F}_m$. 
For $i \in \{1, \ldots, m\}$, let $\psi_i$ be the $\LA$-sentence $0 = 0^{f(i)}$. 
Then, for any $\LA$-sentences $\varphi_1, \ldots, \varphi_n$, the $\LA$-sentence $A(\varphi_1, \ldots, \varphi_n, \psi_1, \ldots, \psi_m)$ is either provable or refutable in $\PA$. 
Let $\sigma$ be any $\Sigma_1$ sentence independent of $\PA + \Con_T$. 
Then, $A(\varphi_1, \ldots, \varphi_n, \psi_1, \ldots, \psi_m) \notin \FGH_T(\sigma)$. 
Therefore $\sFGH_T[A; q_1, \ldots, q_m]$ does not hold. 
\end{proof}

For example, let $A(p, q)$ be the propositional formula $p \leftrightarrow q$. 
Since both $p \leftrightarrow \top$ and $p \leftrightarrow \bot$ are contingent, $\sFGH_T[A; q]$ holds by Theorem \ref{Thm2}. 
That is, for any $\LA$-sentence $\psi$ and any $\Sigma_1$ sentence $\sigma$, there exists an $\LA$-sentence $\varphi$ such that $\varphi \leftrightarrow \psi \in \FGH_T(\sigma)$. 
Another interesting corollary to Theorem \ref{Thm2} is the following version of the first incompleteness theorem. 

\begin{cor}\label{FI1}
Let $A(p_1, \ldots, p_n, q_1, \ldots, q_m)$ be any propositional formula. 
If $A(p_1, \ldots, p_n, \top^{f(1)}, \ldots, \top^{f(m)})$ are contingent for all $f \in \mathcal{F}_m$, then for any $\LA$-sentences $\psi_1, \ldots, \psi_m$, there exist $\LA$-sentences $\varphi_1, \ldots, \varphi_n$ such that $A(\varphi_1, \ldots, \varphi_n, \psi_1, \ldots, \psi_m)$ is independent of $T$. 
\end{cor}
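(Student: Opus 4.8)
The plan is to derive this directly from Theorem \ref{Thm2} together with the consistency of $T$. Assume $A(p_1,\ldots,p_n,\top^{f(1)},\ldots,\top^{f(m)})$ is contingent for all $f \in \mathcal{F}_m$, and let $\psi_1,\ldots,\psi_m$ be arbitrary $\LA$-sentences. First I would fix a $\Sigma_1$ sentence $\sigma$ that is independent of $\PA + \Con_T$; such a sentence exists because $\PA + \Con_T$ is a consistent c.e.\ theory (as $T$ is consistent, $\Con_T$ holds in the standard model, so $\PA + \Con_T$ is sound hence consistent) and the Gödel--Rosser theorem supplies an independent $\Sigma_1$ sentence. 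Apply $\sFGH_T[A; q_1,\ldots,q_m]$, which holds by the implication $(1 \Rightarrow 2)$ of Theorem \ref{Thm2}, to this $\sigma$ and these $\psi_i$: there exist $\LA$-sentences $\varphi_1,\ldots,\varphi_n$ with $A(\varphi_1,\ldots,\varphi_n,\psi_1,\ldots,\psi_m) \in \FGH_T(\sigma)$, i.e.
\[
	\PA + \Con_T \vdash \sigma \leftrightarrow \PR_T(\gn{A(\varphi_1,\ldots,\varphi_n,\psi_1,\ldots,\psi_m)}).
\]

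Next I would transfer the independence of $\sigma$ to the target sentence. Write $\vartheta$ for $A(\varphi_1,\ldots,\varphi_n,\psi_1,\ldots,\psi_m)$. If $T \vdash \vartheta$, then $\PA \vdash \PR_T(\gn{\vartheta})$ by the first clause on $\PR_T$, hence $\PA + \Con_T \vdash \sigma$, contradicting the independence of $\sigma$ over $\PA + \Con_T$ (note $T$ extends $\PA + \Con_T$? — more carefully, $T$ proves $\PA$ but need not prove $\Con_T$; so from $\PA \vdash \PR_T(\gn{\vartheta})$ we get $\PA + \Con_T \vdash \PR_T(\gn{\vartheta})$ and then $\PA + \Con_T \vdash \sigma$, which is the contradiction). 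For the other direction, if $T \vdash \neg\vartheta$, then $T \vdash \vartheta \to 0=1$, so $\PA \vdash \PR_T(\gn{\vartheta}) \to \PR_T(\gn{0=1})$ by the second clause, i.e.\ $\PA + \Con_T \vdash \neg \PR_T(\gn{\vartheta})$, whence $\PA + \Con_T \vdash \neg\sigma$, again contradicting independence. Therefore $T \nvdash \vartheta$ and $T \nvdash \neg\vartheta$, i.e.\ $\vartheta$ is independent of $T$.

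The only genuine point requiring care is the transfer step: the $\FGH_T$ equivalence is over $\PA + \Con_T$, while the desired independence is over $T$, and these two theories are not comparable in general ($T$ extends $\PA$ but may not prove $\Con_T$, and conversely $\PA + \Con_T$ need not prove $T$'s extra axioms). The argument above sidesteps this by working entirely within $\PA + \Con_T$: $T$-provability of $\vartheta$ or $\neg\vartheta$ is reflected, via the derivability conditions, into $\PA$-provable facts about $\PR_T(\gn{\vartheta})$, which then combine with the $\FGH_T$ equivalence inside $\PA + \Con_T$ to contradict the $\Sigma_1$-independence of $\sigma$ there. I expect this to be the main obstacle only in the sense of bookkeeping — choosing the right base theory at each inference — rather than any real difficulty; once $\sigma$ is chosen $\Sigma_1$ and independent over $\PA + \Con_T$, the rest is a routine application of the derivability conditions.
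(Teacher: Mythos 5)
Your proposal is correct and follows the paper's proof exactly: fix a $\Sigma_1$ sentence $\sigma$ independent of $\PA + \Con_T$, apply Theorem \ref{Thm2} to get the equivalence $\PA + \Con_T \vdash \sigma \leftrightarrow \PR_T(\gn{A(\varphi_1, \ldots, \varphi_n, \psi_1, \ldots, \psi_m)})$, and transfer independence via the derivability conditions. The paper leaves the last step as ``easy to show''; your spelled-out version of it is accurate.
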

\begin{proof}
Let $\sigma$ be a $\Sigma_1$ sentence independent of $\PA + \Con_T$ and $\psi_1, \ldots, \psi_m$ be any $\LA$-sentences. 
By Theorem \ref{Thm2}, there exist $\LA$-sentences $\varphi_1, \ldots, \varphi_n$ such that
\[
	\PA + \Con_T \vdash \sigma \leftrightarrow \PR_T(\gn{A(\varphi_1, \ldots, \varphi_n, \psi_1, \ldots, \psi_m)}). 
\]
Then, it is easy to show that $A(\varphi_1, \ldots, \varphi_n, \psi_1, \ldots, \psi_m)$ is independent of $T$. 

\end{proof}

Notice that we can also prove the parameterized version and the weak representability version of Theorem \ref{Thm2}.

\section{On the form of a sentence in the FGH theorem\\ -- the case of modal propositional logic}\label{Modal}

In this section, we extend Theorem \ref{Thm1} to the framework of modal propositional logic. 
This section consists of three subsections. 
First, we give some preparations which are needed for our arguments. 
In the second subsection, we prove our modal version of Theorem \ref{Thm1}. 
Finally, we also investigate our modal version for $\LA$-theories with finite heights. 

Here we define the height of theories. 
The sequence $\langle \Con_T^n \rangle_{n \in \omega}$ of $\LA$-sentences is recursively defined as follows: 
$\Con_T^0$ is $0 = 0$; and $\Con_T^{n+1}$ is $\neg \PR_T(\gn{\neg \Con_T^n})$. 
Notice that $\Con_T^1$ is $\PA$-provably equivalent to $\Con_T$. 
If there exists a natural number $k \geq 1$ such that $T \vdash \neg \Con_T^k$, then the least such a number $k$ is called the \textit{height} of $T$.  
If not, we say that the height of $T$ is $\infty$.

\subsection{Preparations}

The language of modal propositional logic is that of classical propositional logic equipped with the unary modal operation $\Box$. 
For each modal formula $A$, let $\Sub(A)$ be the set of all subformulas of $A$. 
Let $\cx(A)$ be the cardinality of the set $\{B \in \Sub(A) \mid B$ is of the form $\Box C\}$. 
The modal formula $\Box^n A$ is recursively defined as follows: $\Box^0 A$ is $A$; and $\Box^{n+1} A$ is $\Box \Box^n A$. 
The formula $\Diamond^n A$ is an abbreviation for $\neg \Box^n \neg A$.

The base logic of our investigations in this section is the G\"odel--L\"ob logic $\GL$ which is known as the logic of provability. 
The axioms of $\GL$ are as follows: 
\begin{enumerate}
	\item All propositional tautologies in the language of modal propositional logic, 
	\item $\Box(p \to q) \to (\Box p \to \Box q)$, 
	\item $\Box (\Box p \to p) \to \Box p$. 
\end{enumerate}
The inference rules of $\GL$ are modus ponens, necessitation, and uniform substitution. 

A \textit{$\GL$-frame} is a tuple $\langle W, \sqsubset, r \rangle$ where $W$ is a nonempty finite set, $\sqsubset$ is a transitive irreflexive binary relation on $W$ and $r$ is an element of $W$ with $r \sqsubset x$ for all $x \in W \setminus \{r\}$. 
Such an element $r$ is called the \textit{root} of the frame. 
A \textit{$\GL$-model} is a tuple $M = \langle W, \sqsubset, r, \Vdash \rangle$ where $\langle W, \sqsubset, r \rangle$ is a $\GL$-frame, 
and $\Vdash$ is a binary relation between $W$ and the set of all modal formulas satisfying the usual conditions for satisfaction and the following condition: 
$a \Vdash \Box A$ if and only if for all $b \in W$, $b \Vdash A$ if $a \sqsubset b$. 
It is known that $\GL$ is sound and complete with respect to the class of all $\GL$-frames (cf.~Segerberg \cite{Seg}). 
Moreover, the proof of the Kripke completeness of $\GL$ given in the textbook \cite{Boo} by Boolos shows that the following theorem holds. 

\begin{thm}\label{GLCompl}
For any modal formula $A$, if $\GL \nvdash A$, then there exists a $\GL$-model $\langle W, \sqsubset, r, \Vdash \rangle$ such that $r \Vdash \Box^{\cx(A) + 1} \bot$ and $r \nVdash A$.   
\end{thm}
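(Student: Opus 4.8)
The plan is to trace through the standard Kripke completeness proof for $\GL$ (as presented in Boolos's textbook) and observe that the model it produces can be chosen so that its depth is bounded by $\cx(A) + 1$. First I would recall the setup: assuming $\GL \nvdash A$, one works with the \emph{adequate set} $\Phi$ consisting of all subformulas of $A$ together with their negations (and possibly $\Box\bot$, closed under single negation). The worlds of the canonical countermodel are taken to be maximal $\GL$-consistent subsets of $\Phi$, with the accessibility relation $X \sqsubset Y$ defined by the usual transitivity-forcing condition: every $\Box B \in \Phi$ that lies in $X$ has both $\Box B$ and $B$ in $Y$, \emph{and} some $\Box C \in \Phi$ lies in $Y$ but not in $X$ (the strict-inclusion clause on boxed formulas, which is what makes $\sqsubset$ irreflexive and well-founded given the $\GL$ axiom $\Box(\Box p \to p)\to \Box p$). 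Because $\GL \nvdash A$, there is such a maximal consistent set $w_0$ with $\neg A \in w_0$; restricting to the submodel generated by $w_0$ and then bolting on a root below it (or simply taking $w_0$ as root, since the generated submodel is already rooted) gives a finite $\GL$-model $\langle W, \sqsubset, r, \Vdash\rangle$ with $r \nVdash A$ by the Truth Lemma.

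The key quantitative step is the depth bound. Along any $\sqsubset$-chain $X_0 \sqsubset X_1 \sqsubset \cdots \sqsubset X_k$ in this model, the strict-inclusion clause forces the sets $\{\Box B \in \Phi : \Box B \in X_0\} \subsetneq \{\Box B \in \Phi : \Box B \in X_1\} \subsetneq \cdots$ to be strictly increasing, and each is a subset of the set of boxed formulas in $\Phi$, which has exactly $\cx(A)$ elements (here I use that the boxed subformulas of $\neg A$ are precisely those of $A$, so $\cx(\neg A) = \cx(A)$, and that $\Box\bot$, if added to the adequate set, is among the $\cx(A)$ boxed formulas only when $\Box\bot$ is already a subformula — more carefully, one adds $\Box\bot$ as an extra boxed formula, giving at most $\cx(A)+1$ boxed formulas in $\Phi$, but since $\Box\bot \in X$ would make $X$ inconsistent, it never actually occurs in any world, so the effective bound on chain length is still governed by the $\cx(A)$ genuine boxed subformulas). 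Hence any chain has length at most $\cx(A)$, i.e. the longest $\sqsubset$-chain starting at $r$ has at most $\cx(A)+1$ worlds, so every chain of length $\cx(A)+1$ is empty: this is exactly the semantic content of $r \Vdash \Box^{\cx(A)+1}\bot$, which one verifies by an easy induction on $n$ showing $x \Vdash \Box^{n}\bot$ whenever every chain out of $x$ has length less than $n$.

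I would then assemble the proof in three moves: (i) quote the Kripke completeness construction from \cite{Boo} to get a finite rooted $\GL$-model $M$ with root $r$ and $r \nVdash A$, built over the adequate set $\Phi$ for $A$ (augmented with $\Box\bot$); (ii) prove the combinatorial lemma that every $\sqsubset$-ascending chain in $M$ has length at most $\cx(A)$, via the strictly increasing family of boxed-formula sets; (iii) deduce $r \Vdash \Box^{\cx(A)+1}\bot$ from (ii) by the induction on the modal degree noted above.

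The main obstacle is a bookkeeping one rather than a conceptual one: pinning down exactly which adequate set Boolos's proof uses and confirming that the count of boxed formulas available to drive the strict-inclusion clause is genuinely $\cx(A)$ (not $\cx(A)+1$ or something larger) after whatever closure operations are applied — in particular handling the $\Box\bot$ that is conventionally thrown in to guarantee the model is nonempty and irreflexive. Once one observes that $\Box\bot$ can never belong to a consistent world and therefore cannot contribute to lengthening a chain, the "$+1$" in the exponent is seen to come purely from turning a chain-length bound of $\cx(A)$ into a statement about chains of length $\cx(A)+1$ being empty, and the theorem follows.
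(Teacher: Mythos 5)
Your overall strategy is exactly what the paper intends: the paper gives no proof of this theorem, merely observing that Boolos's Kripke completeness proof for $\GL$ yields it, and your reconstruction --- maximal consistent subsets of the negation-closed set of subformulas, the accessibility relation with the strict-inclusion clause on boxed formulas, and the resulting bound of $\cx(A)$ on the length of $\sqsubset$-chains --- is the right way to extract the depth bound.

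There is, however, one concrete error in your bookkeeping: the claim that ``$\Box\bot \in X$ would make $X$ inconsistent'' is false. $\Box\bot$ is $\GL$-consistent ($\GL$ does not prove $\neg\Box\bot$; $\Box\bot$ holds at any dead-end world), so a maximal consistent subset of an adequate set containing $\Box\bot$ can perfectly well contain $\Box\bot$, and such a world could then sit at the top of a $\sqsubset$-chain one step longer than your bound allows (the new boxed formula gained at the last step being $\Box\bot$ itself). As assembled, your step (i) (adequate set ``augmented with $\Box\bot$'') together with step (ii) (chain length at most $\cx(A)$) is therefore not justified. The repair is simply to drop the augmentation: Boolos's construction does not add $\Box\bot$ to the adequate set and does not need to, since irreflexivity and converse well-foundedness of $\sqsubset$ already follow from the clause requiring some $\Box C$ in $Y$ but not in $X$ --- the very clause that yields the strictly increasing family of boxed-formula sets you use. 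With the adequate set consisting only of subformulas of $A$ and their negations, the boxed formulas number exactly $\cx(A)$, every chain from the root $w_0 \ni \neg A$ has at most $\cx(A)$ proper successors, and $r \Vdash \Box^{\cx(A)+1}\bot$ follows as you describe. (Also, take $w_0$ itself as the root rather than ``bolting on a root below'', which would lengthen chains by one and spoil the bound.)
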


For each set $\Gamma$ of modal formulas, let $\GL + \Gamma$ denote the logic whose axioms are all theorems of $\GL$ and all elements of $\Gamma$, and whose inference rules are modus ponens and uniform substitution. 
We identify each axiomatic system of modal propositional logic with the set of all its theorems. 
We introduce the following two extensions of $\GL$ which are studied in the context of the classification of propositional provability logics (cf.~Artemov and Beklemishev \cite{AB}). 

\begin{itemize}
	\item $\GL_\omega : = \GL + \{\Diamond^n \top \mid n \in \omega\}$. 
	\item $\GLS : = \GL + \{\Box p \to p\}$. 
\end{itemize}

Notice $\GL \subseteq \GL_\omega \subseteq \GLS$. 
To connect these logics with arithmetic, we introduce the notion of arithmetical interpretation. 

\begin{defn}[Arithmetical interpretations]
A mapping from the set of all propositional variables to a set of $\LA$-sentences is called an \textit{arithmetical interpretation}. 
Each arithmetical interpretation $f$ is uniquely extended to the mapping $f_T$ from the set of all modal formulas to a set of $\LA$-sentences by the following clauses: 
\begin{enumerate}
	\item $f_T(\bot)$ is $0=1$, 
	\item $f_T(\neg A)$ is $\neg f_T(A)$, 
	\item $f_T(A \circ B)$ is $f_T(A) \circ f_T(B)$ for $\circ \in \{\land, \lor, \to, \leftrightarrow\}$, 
	\item $f_T(\Box A)$ is $\PR_T(\gn{f_T(A)})$. 
\end{enumerate}
\end{defn}

The logics $\GL$, $\GL_\omega$ and $\GLS$ are sound with respect to arithmetical interpretations. 
Let $\N$ denote the standard model of arithmetic. 

\begin{fact}[Arithmetical soundness (cf.~Artemov and Beklemishev \cite{AB})]\label{AS}
Let $A$ be any modal formula. 
\begin{enumerate}
	\item If $\GL \vdash A$, then $\PA \vdash f_T(A)$ for any arithmetical interpretation $f$. 
	\item If the height of $T$ is $\infty$ and $\GL_\omega \vdash A$, then $\N \models f_T(A)$ for any arithmetical interpretation $f$. 
	\item If $T$ is sound and $\GLS \vdash A$, then $\N \models f_T(A)$ for any arithmetical interpretation $f$. 
\end{enumerate}
\end{fact}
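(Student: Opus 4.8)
The plan is to prove all three clauses by induction on the length of a derivation in the system at hand, showing that $f_T$ sends every axiom to a sentence of the required kind and that the inference rules preserve this. For clause~(1) the relevant property is ``$\PA\vdash f_T(\cdot)$ for every arithmetical interpretation $f$''; for clauses~(2) and~(3) it is ``$\N\models f_T(\cdot)$ for every arithmetical interpretation $f$''. In all cases the quantifier over interpretations must be kept inside the induction hypothesis, since this is what makes the uniform substitution rule go through: if $A'$ arises from $A$ by substituting modal formulas for propositional variables, then $f_T(A')=g_T(A)$ where $g$ maps each variable to $f_T$ of the formula substituted for it, so applying the hypothesis for $A$ to $g$ gives the conclusion for $A'$.

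For clause~(1) I would induct on a $\GL$-proof of $A$. Each propositional tautology is sent to an $\LA$-sentence that is a substitution instance of a propositional tautology, hence provable in $\PA$. The distribution axiom $\Box(p\to q)\to(\Box p\to\Box q)$ is sent to an instance of the second clause for $\PR_T$ recorded in the introduction. The L\"ob axiom $\Box(\Box p\to p)\to\Box p$ is sent to an instance of the formalized L\"ob theorem $\PA\vdash\PR_T(\gn{\PR_T(\gn{\psi})\to\psi})\to\PR_T(\gn{\psi})$, which is derivable from the three clauses for $\PR_T$ (see \cite{Boo}). Modus ponens is immediate; for necessitation, $\PA\vdash f_T(B)$ implies $T\vdash f_T(B)$ since $\PA\subseteq T$, whence the first clause for $\PR_T$ gives $\PA\vdash\PR_T(\gn{f_T(B)})$, i.e., $\PA\vdash f_T(\Box B)$; uniform substitution is handled as above.

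For clauses~(2) and~(3) the proof of $A$ uses only modus ponens and uniform substitution, so it suffices to check that the axioms are $\N$-valid under all interpretations (modus ponens plainly preserves $\N$-truth, and substitution preserves $\N$-validity under all interpretations, as in clause~(1)). Theorems of $\GL$ used as axioms are covered by clause~(1). For $\GL_\omega$ it remains to treat $\Diamond^n\top$: unwinding $f_T$ shows that $f_T(\Diamond^n\top)$ is $\PA$-provably equivalent to $\Con_T^n$, and for $n\ge1$ one has $\N\models\Con_T^n$ iff $T\nvdash\neg\Con_T^{n-1}$, while $\Con_T^0$ is $0=0$; since the height of $T$ is $\infty$ we have $T\nvdash\neg\Con_T^k$ for all $k\ge1$, and $T$ is consistent, so $\N\models\Con_T^n$ for all $n$. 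For $\GLS$ it remains to treat $\Box p\to p$, whose $f_T$-image is $\PR_T(\gn{f_T(p)})\to f_T(p)$: if $\N\models\PR_T(\gn{f_T(p)})$ then $T\vdash f_T(p)$, and the soundness of $T$ yields $\N\models f_T(p)$.

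I do not expect a genuine obstacle here: every step is a routine verification of the Hilbert--Bernays--L\"ob derivability conditions, of the definition of the height of $T$, or of the soundness hypothesis. The only places needing a little care are the bookkeeping for the substitution rule (resolved by quantifying over interpretations in the induction hypothesis) and the invocation of the formalized L\"ob theorem, both entirely standard.
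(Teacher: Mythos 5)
Your argument is correct; the paper itself states this as a Fact with a citation to Artemov--Beklemishev and gives no proof, and your induction on derivations is exactly the standard argument found there. You rightly handle the two points that actually require care: keeping the quantifier over interpretations inside the induction hypothesis so that uniform substitution goes through, and observing that $\GL_\omega$ and $\GLS$ are closed only under modus ponens and substitution (not necessitation), which is essential since $\N$-truth of $f_T(\Box B \to B)$ is not preserved by necessitation.
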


We show some interrelationships between these logics. 
For each modal formula $A$, let $\Rf(A)$ denote the set $\{\Box B \to B \mid \Box B \in \Sub(A)\}$. 
Notice that the cardinality of the set $\Rf(A)$ is exactly $\cx(A)$. 	

\begin{fact}[Solovay \cite{Sol}]\label{GLvsGLS}
For any modal formula $A$, the following are equivalent: 
\begin{enumerate}
	\item $\GLS \vdash A$. 
	\item $\GL \vdash \bigwedge \Rf(A) \to A$. 
\end{enumerate}
\end{fact}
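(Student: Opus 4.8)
The plan is to treat the two implications separately; both rest on the following normal form for $\GLS$. Since $\GLS$ is $\GL$ together with the schema $\Box p \to p$ under modus ponens and uniform substitution, one has $\GLS \vdash A$ if and only if $\GL \vdash \bigwedge_{i=1}^{k}(\Box B_i \to B_i) \to A$ for some modal formulas $B_1, \dots, B_k$: the class of $A$ satisfying the right-hand side contains every theorem of $\GL$ and every instance of $\Box p \to p$, and --- using that $\GL$ contains all propositional tautologies and is closed under uniform substitution --- is closed under modus ponens and uniform substitution, hence contains $\GLS$, while the converse inclusion is immediate. Since $\Rf(A)$ is one admissible choice of $\{\Box B_1 \to B_1, \dots, \Box B_k \to B_k\}$, the implication $(2) \Rightarrow (1)$ follows at once.

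For $(1) \Rightarrow (2)$ I argue by contraposition. Suppose $\GL \nvdash \bigwedge \Rf(A) \to A$. By the Kripke completeness of $\GL$ (a fortiori by Theorem \ref{GLCompl}, whose stronger depth bound is not needed here) there is a $\GL$-model $M = \langle W, \sqsubset, r, \Vdash \rangle$ with $r \nVdash A$ and $r \Vdash \Box C \to C$ for every $\Box C \in \Sub(A)$. By the normal form it suffices, given arbitrary $B_1, \dots, B_k$, to construct a $\GL$-model with a point forcing both $\bigwedge_{i=1}^{k}(\Box B_i \to B_i)$ and $\neg A$.

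The construction attaches a tall finite tower below $M$. Fix $d$ exceeding every $\mathrm{md}(B_i)$ (and $d \ge 1$), where $\mathrm{md}$ denotes modal depth, and let $M^{*}$ be the $\GL$-model obtained from $M$ by adding new points with $u_d \sqsubset^{*} u_{d-1} \sqsubset^{*} \cdots \sqsubset^{*} u_0 \sqsubset^{*} r$ (so, by transitivity, $u_j \sqsubset^{*} z$ for every $z \in W$ and every $j$), declaring each $u_j$ to force exactly the propositional variables forced by $r$, and keeping the valuation of $M$ on $W$; the root of $M^{*}$ is $u_d$. Two claims are then proved by induction on formulas. First, $u_j \Vdash^{*} D$ iff $r \Vdash D$ for every $D \in \Sub(A)$ and every $j$: the only delicate case is $D = \Box C \in \Sub(A)$, where forcing $\Box C$ at $u_j$ amounts to ``$C$ holds throughout $W$'' together with ``$C$ holds at $u_0, \dots, u_{j-1}$'', and each of these reduces to $r \Vdash \Box C$ by using that $r$ is the root of $M$, the inductive hypothesis for $C$, and the reflection principle $r \Vdash \Box C \to C$; in particular $u_d \nVdash^{*} A$. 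Second, a stabilization lemma: for every formula $C$, the value of $u_j \Vdash^{*} C$ is the same for all $j$ with $\mathrm{md}(C) \le j \le d$; again the $\Box$ case is the heart of it, where the inductive hypothesis shows that the contribution of the tower nodes below $u_j$ stops changing once $j$ passes the modal depth of the subformula. Applying stabilization to each $B_i$ (so $\mathrm{md}(B_i) \le d-1$), the nodes $u_d$ and $u_{d-1}$ agree on $B_i$; and if $u_d \Vdash^{*} \Box B_i$ then in particular its successor $u_{d-1}$ forces $B_i$, whence $u_d \Vdash^{*} B_i$. Thus $u_d \Vdash^{*} \bigwedge_{i=1}^{k}(\Box B_i \to B_i)$ while $u_d \nVdash^{*} A$, as required.

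The main obstacle is exactly this second claim. Adding a single new root below $M$ already yields the first claim (and thus reproves $\GL \nvdash A$), but such a point need not satisfy reflection for formulas unrelated to $A$ --- for instance $\Box \Box p \to \Box p$ can fail at a point placed immediately below $M$ --- so one is forced to let the height of the prefixed tower grow with the modal complexity of the $B_i$ and to establish the stabilization lemma, which is what makes a sufficiently tall tower behave ``as if reflexive'' for all formulas up to the relevant depth. A secondary point to get right is the $\GLS$ normal form itself, which relies on $\GL$ containing all tautologies and being closed under uniform substitution.
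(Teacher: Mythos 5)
The paper states this result as a bare Fact with a citation to Solovay and gives no proof, so there is nothing internal to compare against; judged on its own, your argument is correct and complete. The direction $(2)\Rightarrow(1)$ via the ``deduction theorem'' normal form for $\GLS$ (using that $\GL$ contains all tautologies and that the only rules of $\GLS$ beyond $\GL$ are modus ponens and uniform substitution, both of which preserve the form $\GL \vdash \bigwedge_i(\Box B_i \to B_i) \to A$) is exactly right, and it correctly isolates why $(1)\Rightarrow(2)$ is the nontrivial direction: one must handle reflection instances $\Box B_i \to B_i$ for \emph{arbitrary} $B_i$, not just subformulas of $A$. Your finite-tower construction with the stabilization lemma is a known purely modal technique (a finite truncation of the ``tail model'' idea), and your inductions check out: Claim~1 uses $r \Vdash \Rf(A)$ precisely where needed in the $\Box C$ case, and Claim~2 correctly shows that once the tower is taller than $\mathrm{md}(B_i)$ the top two nodes agree on $B_i$, which forces $\Box B_i \to B_i$ at the new root. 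This differs from Solovay's original route, which obtains the equivalence as a by-product of the arithmetical completeness theorem for $\GLS$ (if $\GL \nvdash \bigwedge\Rf(A)\to A$, a modified Solovay embedding produces an arithmetical interpretation falsifying $A$, whence $\GLS \nvdash A$ by arithmetical soundness); your proof has the advantage of being entirely elementary and self-contained within Kripke semantics, at the cost of the somewhat delicate stabilization induction. The only point worth making explicit is that truth values at points of $W$ are unchanged in the extended model because $W$ is upward closed under $\sqsubset^{*}$ (a generated-submodel observation), which you use tacitly when you speak of ``$C$ holds throughout $W$.''
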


The following fact is a kind of folklore which is proved through arithmetical interpretations. 

\begin{fact}\label{LemBox}
For any modal formula $A$, the following are equivalent: 
\begin{enumerate}
	\item $\GL \vdash A$. 
	\item $\GL_\omega \vdash \Box A$. 
	\item $\GLS \vdash \Box A$. 
\end{enumerate}
\end{fact}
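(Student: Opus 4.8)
The plan is to prove the chain $(1)\Rightarrow(2)\Rightarrow(3)\Rightarrow(1)$, using arithmetical soundness of $\GLS$ for the only nontrivial direction. The implication $(1)\Rightarrow(2)$ is the easy half: if $\GL\vdash A$, then by necessitation $\GL\vdash\Box A$, and since $\GL\subseteq\GL_\omega$ we get $\GL_\omega\vdash\Box A$. Similarly $(2)\Rightarrow(3)$ is immediate from $\GL_\omega\subseteq\GLS$ (noted right after the definitions). So the whole content is in $(3)\Rightarrow(1)$.

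For $(3)\Rightarrow(1)$, I would argue by contraposition. Suppose $\GL\nvdash A$. By Theorem~\ref{GLCompl} (the refined Kripke completeness), there is a $\GL$-model $M=\langle W,\sqsubset,r,\Vdash\rangle$ with $r\Vdash\Box^{\cx(A)+1}\bot$ and $r\nVdash A$. Now I want to use this model to refute $\Box A$ in $\GLS$. The standard trick in Solovay-style arguments is to pass to an ``extended'' model: adjoin a new root $r^{-}$ below $r$ (i.e.\ set $W'=W\cup\{r^{-}\}$ with $r^{-}\sqsubset x$ for every $x\in W$, keeping $\sqsubset$ transitive and irreflexive), and let $r^{-}$ force exactly the same propositional variables as... well, it does not matter which, but the point is that at $r^{-}$ we have $r^{-}\nVdash\Box A$ because $r\sqsupset r^{-}$ (using $r\sqsupset$, i.e.\ $r^- \sqsubset r$) and $r\nVdash A$, so $r^{-}\nVdash\Box A$ as a $\GL$-model. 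Since $\GL$ is complete with respect to $\GL$-frames, this yields $\GL\nvdash\Box A$. But this alone only shows $\GL\nvdash\Box A$, not $\GLS\nvdash\Box A$; to get the latter I use Fact~\ref{GLvsGLS}: $\GLS\vdash\Box A$ iff $\GL\vdash\bigwedge\Rf(\Box A)\to\Box A$. Here $\Rf(\Box A)=\{\Box B\to B\mid \Box B\in\Sub(\Box A)\}=\Rf(A)\cup\{\Box A\to A\}$. So I must show $\GL\nvdash\bigwedge\Rf(A)\wedge(\Box A\to A)\to\Box A$.

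The key observation that makes this work is the condition $r\Vdash\Box^{\cx(A)+1}\bot$ in Theorem~\ref{GLCompl}: it forces the model below $r$ to have $\sqsubset$-depth at most $\cx(A)$, so along any $\sqsubset$-chain from $r$ the formulas $\Box B$ for $\Box B\in\Sub(A)$ cannot all ``flip'' — more precisely, there is a point $w$ in $M$ (a $\sqsubset$-maximal point reachable appropriately, or $r$ itself after iterating) at which every $\Box B\in\Sub(A)$ satisfies $w\Vdash\Box B\to B$, and likewise $w\Vdash\Box A\to A$, while still $w\nVdash\Box A$ for a suitable choice; the depth bound $\cx(A)+1$ is exactly what guarantees enough room to find such a $w$ without running out of the $\cx(A)$ many boxed subformulas. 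Concretely I would: take the extended model with new root $r^-$; observe $r^-\nVdash\Box A$; then check that $r^-$ satisfies every reflection axiom in $\Rf(A)$ and $\Box A\to A$ — for $\Box B\to B$ this holds at $r^-$ iff either $r^-\nVdash\Box B$ or $r^-\Vdash B$, and using $r\Vdash\Box^{\cx(A)+1}\bot$ together with a counting/pigeonhole argument on the $\le\cx(A)+1$ levels one arranges the propositional valuation at $r^-$ so that all these implications hold simultaneously. That gives a $\GL$-model refuting $\bigwedge\Rf(A)\wedge(\Box A\to A)\to\Box A$ at $r^-$, hence by completeness of $\GL$ this formula is not a $\GL$-theorem, hence by Fact~\ref{GLvsGLS} $\GLS\nvdash\Box A$, completing the contrapositive.

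The main obstacle I anticipate is precisely the construction of the valuation at the new root $r^-$ (or, equivalently, the identification of a point validating all of $\Rf(A)\cup\{\Box A\to A\}$ while failing $\Box A$): one has to be careful that adjoining $r^-$ and choosing what it forces does not accidentally validate $\Box A$ or invalidate some reflection instance, and this is where the bound $r\Vdash\Box^{\cx(A)+1}\bot$ is essential and must be used quantitatively. An alternative, possibly cleaner route is to avoid the explicit model surgery and instead argue semantically through arithmetical interpretations: build a Solovay-style arithmetical interpretation $f$ from the model $M$ so that $\PA$ (or $\N$) sees $f_T(\Box A)$ false while all the $\GLS$-axiom instances are true, using Fact~\ref{AS}(3); but since the paper states Fact~\ref{LemBox} is ``proved through arithmetical interpretations,'' I would lean toward that semantic argument, deriving $\GLS\nvdash\Box A$ from arithmetical soundness of $\GLS$ (Fact~\ref{AS}) applied to a soundness-witnessing interpretation built from the refuting $\GL$-model, with the depth bound ensuring the interpretation can be chosen to make $T$ sound on the relevant sentences.
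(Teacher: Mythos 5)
The paper states this Fact without proof, remarking only that it is folklore ``proved through arithmetical interpretations''; that one-line hint corresponds to your \emph{second} route: if $\GL\nvdash A$, Solovay's arithmetical completeness theorem (applied to a \emph{sound} theory, e.g.\ $\PA$ itself, not the paper's arbitrary consistent $T$) yields an interpretation $f$ with $\PA\nvdash f_\PA(A)$, hence $\N\models\neg f_\PA(\Box A)$, and Fact~\ref{AS}.3 then gives $\GLS\nvdash\Box A$; the chain $(1)\Rightarrow(2)\Rightarrow(3)$ is trivial as you say. Your closing sentence, however, garbles this: soundness of the theory is a hypothesis you secure by choosing $\PA$, not something ``the interpretation can be chosen to make'' hold, and you need the full Solovay completeness theorem, which the paper only cites.

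Your primary, model-theoretic route has a genuine gap exactly where you flag it, and the repair is not the one you gesture at. A single new root $r^-$ need not validate $\Rf(\Box A)$: the bad case is a subformula $\Box B$ with $r^-\Vdash\Box B$ but $r^-\nVdash B$, and since $B$ may itself contain boxes (whose values at $r^-$ differ from their values at $r$ because $r^-$ additionally sees $r$), no choice of propositional valuation at $r^-$ fixes this in general. Moreover the hypothesis $r\Vdash\Box^{\cx(A)+1}\bot$ is a red herring for this direction — any countermodel to $A$ works — and your suggestion to find the witnessing point \emph{inside} $M$ at a $\sqsubset$-maximal point cannot work, since maximal points force $\Box A$ vacuously. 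The correct surgery is to append a descending chain of $\cx(A)+3$ new points $c_0\sqsupset c_1\sqsupset\cdots$ below $r$ (valuation irrelevant): the sets $S_j=\{\Box B\in\Sub(\Box A)\mid c_j\Vdash\Box B\}$ decrease as $j$ increases, so some consecutive $S_i=S_{i+1}$, and at $c_i$ every $\Box B\to B$ with $\Box B\in\Sub(\Box A)$ holds (if $c_i\Vdash\Box B$ then $c_{i+1}\Vdash\Box B$, whence $c_i\Vdash B$), while $c_i\nVdash\Box A$ because $c_i\sqsubset r$ and $r\nVdash A$ is preserved by adding points below. Then Fact~\ref{GLvsGLS} gives $\GLS\nvdash\Box A$. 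This is the same chain-plus-pigeonhole device the paper itself uses in Lemma~\ref{TLem} and attributes to \cite[Lemma 26]{AB}; as written, your ``arrange the valuation at $r^-$'' step does not go through.
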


\begin{lem}\label{LemNegBox}
For any modal formula $A$, the following are equivalent: 
\begin{enumerate}
	\item $\GL \vdash \bigwedge \Rf(\Box A) \to \neg \Box A$. 
	\item $\GL \vdash \Diamond^{\cx(A)+1} \top \to \neg \Box A$. 
	\item $\GL_\omega \vdash \neg \Box A$. 
	\item $\GLS \vdash \neg \Box A$. 
\end{enumerate}
\end{lem}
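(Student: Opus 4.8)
The plan is to close the cycle $(1)\Rightarrow(2)\Rightarrow(3)\Rightarrow(4)\Rightarrow(1)$, in which three of the implications are essentially free and the substance is in $(1)\Rightarrow(2)$. For $(4)\Rightarrow(1)$ I would invoke Fact~\ref{GLvsGLS} with $\neg\Box A$ in place of $A$: since $\neg\Box A$ is not of the form $\Box C$, the boxed subformulas of $\neg\Box A$ are exactly those of $\Box A$, so $\Rf(\neg\Box A)=\Rf(\Box A)$, and Fact~\ref{GLvsGLS} then reads precisely ``$\GLS\vdash\neg\Box A$ iff $\GL\vdash\bigwedge\Rf(\Box A)\to\neg\Box A$''. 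For $(3)\Rightarrow(4)$ use $\GL_\omega\subseteq\GLS$. For $(2)\Rightarrow(3)$, note that $\Diamond^{\cx(A)+1}\top$ is an axiom of $\GL_\omega$, so modus ponens applied to the $\GL$-theorem in (2) gives $\GL_\omega\vdash\neg\Box A$.

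The remaining implication $(1)\Rightarrow(2)$ I would prove by contraposition, using the Kripke semantics of $\GL$. Suppose $\GL\nvdash\Diamond^{\cx(A)+1}\top\to\neg\Box A$, i.e.\ (equivalently in $\GL$) $\GL\nvdash\Box A\to\Box^{\cx(A)+1}\bot$. By the Kripke completeness of $\GL$ (Theorem~\ref{GLCompl}) there is a $\GL$-model $\langle W,\sqsubset,r,\Vdash\rangle$ with $r\Vdash\Box A$ and $r\nVdash\Box^{\cx(A)+1}\bot$; the latter yields a $\sqsubset$-chain $r=x_0\sqsubset x_1\sqsubset\cdots\sqsubset x_{\cx(A)+1}$. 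By transitivity every $x_j$ with $j\ge 1$ is a successor of $r$, so $x_j\Vdash A$, and the successors of $x_j$ form a subset of the successors of $r$, so $x_j\Vdash\Box A$ as well; hence $x_j\Vdash A\wedge\Box A$ for each $j\in\{1,\dots,\cx(A)+1\}$.

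The core observation is a persistence fact about boxed subformulas: if $\Box B\in\Sub(A)$ and $x_i\Vdash\Box B$, then $x_j\Vdash B$ for all $j>i$ (because $x_i\sqsubset x_j$), so $\Box B\to B$ holds at every such $x_j$; therefore $\Box B\to B$ fails at most at one node of the chain. As $A$ has exactly $\cx(A)$ boxed subformulas, the set of $j\in\{1,\dots,\cx(A)+1\}$ at which some implication $\Box B\to B$ with $\Box B\in\Sub(A)$ fails has at most $\cx(A)$ elements, so by the pigeonhole principle there is $j^*\in\{1,\dots,\cx(A)+1\}$ at which $\Box B\to B$ holds for every $\Box B\in\Sub(A)$, i.e.\ $x_{j^*}\Vdash\bigwedge\Rf(A)$. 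Since $\Rf(\Box A)=\{\Box A\to A\}\cup\Rf(A)$ and $x_{j^*}\Vdash A\wedge\Box A$ (so $\Box A\to A$ holds there too), we get $x_{j^*}\Vdash\bigwedge\Rf(\Box A)\wedge\Box A$. This contradicts the Kripke soundness of $\GL$ together with (1), so in fact $\GL\vdash\Diamond^{\cx(A)+1}\top\to\neg\Box A$, completing the cycle.

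I do not expect a genuine obstacle here: once the cycle of implications is arranged, the only points needing care are the bookkeeping identities $\cx(\Box A)=\cx(A)+1$ and $\Rf(\Box A)=\{\Box A\to A\}\cup\Rf(A)$, the elementary semantic fact that $r\nVdash\Box^{m}\bot$ produces a $\sqsubset$-chain with $m$ edges, and making sure the pigeonhole count compares the $\cx(A)+1$ nodes $x_1,\dots,x_{\cx(A)+1}$ against at most $\cx(A)$ ``bad'' indices.
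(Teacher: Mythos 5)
Your proposal is correct, and the overall architecture coincides with the paper's: the same cycle of implications, with $(2\Rightarrow 3)$ and $(3\Rightarrow 4)$ immediate and $(4\Rightarrow 1)$ obtained from Fact~\ref{GLvsGLS} via the identity $\Rf(\neg\Box A)=\Rf(\Box A)$. The one place where you genuinely diverge is the key step $(1\Rightarrow 2)$. The paper handles it syntactically in two lines by citing the known $\GL$-theorem $\Diamond^{\cx(\Box A)}\top\to\bigwedge\Rf(\Box A)\lor\Diamond\bigwedge\Rf(\Box A)$ (Lemma~26 of Artemov--Beklemishev) and the fact that $\GL\vdash\Diamond\neg\Box A\to\neg\Box A$, together with $\cx(\Box A)=\cx(A)+1$. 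You instead give a self-contained semantic proof by contraposition: from a countermodel to $\Box A\to\Box^{\cx(A)+1}\bot$ you extract a chain of $\cx(A)+1$ successors of the root, observe that each reflection instance $\Box B\to B$ with $\Box B\in\Sub(A)$ can fail at most once along the chain, and conclude by pigeonhole that some node satisfies $\bigwedge\Rf(\Box A)\land\Box A$, contradicting (1) by soundness. This is essentially a proof of the cited lemma specialized to the present situation; your counting is right ($\cx(A)+1$ candidate nodes versus at most $\cx(A)$ bad indices, with $\Box A\to A$ holding automatically at every successor of the root since $r\Vdash\Box A$). What the paper's route buys is brevity by leaning on a standard reference; what yours buys is that the lemma becomes self-contained modulo Kripke completeness and soundness of $\GL$, both of which the paper already assumes.
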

\begin{proof}
$(1 \Rightarrow 2)$: 
Suppose $\GL \vdash \bigwedge \Rf(\Box A) \to \neg \Box A$. 
It is known that $\GL$ proves $\Diamond^{\cx(\Box A)} \top \to \bigwedge \Rf(\Box A) \lor \Diamond \bigwedge \Rf(\Box A)$ (cf.~\cite[Lemma 26]{AB}). 
Then, $\GL \vdash \Diamond^{\cx(\Box A)} \top \to \neg \Box A \lor \Diamond \neg \Box A$. 
Since $\GL \vdash \Diamond \neg \Box A \to \neg \Box A$, we have $\GL \vdash \Diamond^{\cx(\Box A)} \top \to \neg \Box A$. 
Hence, $\GL \vdash \Diamond^{\cx(A)+1} \top \to \neg \Box A$ because $\cx(\Box A) = \cx(A) + 1$. 

$(2 \Rightarrow 3)$ and $(3 \Rightarrow 4)$: Obvious. 

$(4 \Rightarrow 1)$: By Fact \ref{GLvsGLS} and $\Rf(\Box A) = \Rf(\neg \Box A)$. 
\end{proof}

The conditions ``$\GL \vdash A$'' and ``$\GL_{\omega} \vdash \neg \Box A$'' are also characterized by provability in other extensions of $\GL$.

\begin{defn}
Let $F_s$ be the modal formula $\Box^{s+1} \bot \to \Box^s \bot$. 
\end{defn}

\begin{lem}\label{LemBox2}
For any modal formula $A$ and any natural number $s$ with $s > \cx(A)$, the following are equivalent: 
\begin{enumerate}
	\item $\GL \vdash A$. 
	\item $\GL + \{\neg F_s\} \vdash \Box A$. 
\end{enumerate}
\end{lem}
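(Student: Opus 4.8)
The plan is to prove both implications directly, using the Kripke semantics for $\GL$ together with the syntactic reductions already established. For the easy direction $(1 \Rightarrow 2)$: if $\GL \vdash A$, then since $\GL \subseteq \GL + \{\neg F_s\}$ and necessitation is among the inference rules, we immediately get $\GL + \{\neg F_s\} \vdash \Box A$. Strictly speaking one should note that $\GL + \{\neg F_s\}$ as defined has only modus ponens and uniform substitution as rules, so $\Box A$ is obtained not by applying necessitation but from the $\GL$-theorem $\GL \vdash \Box A$ (which follows from $\GL \vdash A$ by necessitation inside $\GL$) together with the inclusion $\GL \subseteq \GL + \{\neg F_s\}$. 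This direction does not use the hypothesis $s > \cx(A)$.

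For the hard direction $(2 \Rightarrow 1)$, I would argue contrapositively: assume $\GL \nvdash A$ and produce a $\GL + \{\neg F_s\}$-model (or rather a $\GL$-model validating $\neg F_s$) refuting $\Box A$, which suffices because $\GL + \{\neg F_s\}$ is sound with respect to $\GL$-models in which $\neg F_s$ holds at every point — or, more carefully, because if $\GL + \{\neg F_s\} \vdash \Box A$ then $\GL \vdash F_s \lor \cdots$; let me instead phrase it via the deduction-style fact that $\GL + \{\neg F_s\} \vdash \Box A$ implies $\GL$ proves some formula of the shape built from $\neg F_s$ and $\Box A$. The cleaner route: by Theorem \ref{GLCompl}, since $\GL \nvdash A$ there is a $\GL$-model $M_0 = \langle W_0, \sqsubset_0, r_0, \Vdash_0 \rangle$ with $r_0 \Vdash \Box^{\cx(A)+1}\bot$ and $r_0 \nVdash A$. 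The key construction is to glue below $r_0$ a fresh chain of $s - \cx(A) \geq 1$ new points $w_1 \sqsubset w_2 \sqsubset \cdots \sqsubset w_{s-\cx(A)} = r_0'$, with $w_1$ the new root, each new point seeing all points above it; the valuation on the new points is irrelevant for propositional variables but can be copied from $r_0$. Call the resulting model $M$. Since $r_0 \Vdash \Box^{\cx(A)+1}\bot$ in $M_0$, the depth of $M$ from its root is exactly $s+1$, so one checks $M, w_1 \Vdash \Box^{s+1}\bot$ but $M, w_1 \nVdash \Box^{s}\bot$; hence $M, w_1 \Vdash \neg F_s$. By transitivity and the fact that every new point lies below the root $w_1$ of $M$, one verifies $\neg F_s$ holds at every world of $M$ (each world $w$ has a chain of $\bot$-witnesses of the right length below it — this is where $s > \cx(A)$ is used, to have room for the chain). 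Finally, since $w_1 \sqsubset r_0$ and $M_0$ is a generated submodel of $M$ (the accessibility and valuation agree on $W_0$), satisfaction is preserved, so $M, r_0 \nVdash A$, whence $M, w_1 \nVdash \Box A$. As $w_1$ is the root of a $\GL$-model validating $\neg F_s$ everywhere, and $\GL + \{\neg F_s\}$ is sound for such models, we conclude $\GL + \{\neg F_s\} \nvdash \Box A$.

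The main obstacle I anticipate is the soundness bookkeeping for $\GL + \{\neg F_s\}$: one must be careful that $\neg F_s$ is validated not merely at the root but at every point of $M$, because the logic $\GL + \{\neg F_s\}$ closes $\neg F_s$ under uniform substitution but (crucially) \emph{not} under necessitation, so global validity of $\neg F_s$ in $M$ — together with $\GL$-validity of all $\GL$-theorems in every $\GL$-model — is exactly the soundness statement needed. Getting the length of the appended chain right (namely $s - \cx(A)$, using $\Box^{\cx(A)+1}\bot$ at $r_0$ so that the total depth becomes $s+1$) is the one genuine calculation, and it is where the hypothesis $s > \cx(A)$ enters essentially. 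An alternative, more syntactic proof would first invoke Lemma \ref{LemNegBox}-style manipulations to reduce $\GL + \{\neg F_s\} \vdash \Box A$ to a $\GL$-derivation, but the semantic construction above seems the most transparent.
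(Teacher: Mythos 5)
Your overall strategy coincides with the paper's: direction $(1 \Rightarrow 2)$ is immediate, and for $(2 \Rightarrow 1)$ you take the countermodel supplied by Theorem \ref{GLCompl} (root forcing $\Box^{\cx(A)+1}\bot \land \neg A$) and append a chain of fresh points below its root so that the new root forces $\Box^{s+1}\bot \land \Diamond^s\top$, i.e.\ $\neg F_s$, while still refuting $\Box A$. However, two steps as you have written them do not stand. First, the bookkeeping: you append exactly $s - \cx(A)$ points and assert that the depth of the resulting model ``is exactly $s+1$'' because $r_0 \Vdash \Box^{\cx(A)+1}\bot$. That condition only bounds the depth of $M_0$ from above; $M_0$ could be a single dead end even when $\cx(A)$ is large, in which case your new root forces $\Box^s\bot$ and hence $F_s$, not $\neg F_s$. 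The correct prescription (and what the paper implicitly does) is to append however many points are needed to make the new root force $\Diamond^s\top \land \Box^{s+1}\bot$; this is possible precisely because the depth of $M_0$ is at most $\cx(A) < s$.

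Second, and more seriously, your soundness step is based on a false claim: $\neg F_s$ does \emph{not} hold at every world of $M$. At any dead end (and $\GL$-models are finite, so they exist) $\Box\bot$ holds, hence $\Box^s\bot$ and therefore $F_s$ hold; in fact $\neg F_s$ holds only at the new root. Consequently the soundness principle you invoke --- validity in $\GL$-models that globally validate $\neg F_s$ --- is vacuous, since no such model exists, and it cannot yield $\GL + \{\neg F_s\} \nvdash \Box A$. You also have the role of necessitation backwards: it is exactly because $\GL + \{\neg F_s\}$ lacks necessitation that you only need $\neg F_s$ at the root. The repair is short: since $\neg F_s$ contains no propositional variables and the only rules are modus ponens and uniform substitution, one checks by induction on derivations that $\GL + \{\neg F_s\} \vdash B$ if and only if $\GL \vdash \neg F_s \to B$; hence it suffices to exhibit a $\GL$-model whose \emph{root} forces $\neg F_s \land \neg \Box A$, and your corrected construction provides one. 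With these two adjustments your argument matches the paper's proof.
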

\begin{proof}
$(1 \Rightarrow 2)$: Obvious. 

$(2 \Rightarrow 1)$: If $\GL \nvdash A$, then there exists a $\GL$-model $\langle W, \sqsubset, r, \Vdash \rangle$ such that $r \Vdash \Box^{\cx(A)+1} \bot \land \neg A$ by Theorem \ref{GLCompl}. 
Let $\langle W', \sqsubset', r', \Vdash' \rangle$ be a $\GL$-model obtained from $\langle W, \sqsubset, r, \Vdash \rangle$ by adding a chain of new elements below $r$ so that $r' \Vdash' \Box^{s+1} \bot \land \Diamond^s \top$. 
Such a model exists because $s > \cx(A)$. 
Since $r' \Vdash' \neg \Box A$, we obtain $\GL + \{\neg F_s\} \nvdash \Box A$. 
\end{proof}

\begin{lem}\label{LemNegBox2}
For any modal formula $A$ and any natural number $s$ with $s > \cx(A)$, the following are equivalent: 
\begin{enumerate}
	\item $\GL_\omega \vdash \neg \Box A$. 
	\item $\GL + \{\neg F_s\} \vdash \neg \Box A$. 
\end{enumerate}
\end{lem}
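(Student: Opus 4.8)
The plan is to mimic the proof of Lemma~\ref{LemBox2}, but to invoke Lemma~\ref{LemNegBox} where that proof appealed directly to Theorem~\ref{GLCompl}, and to carry the reflection instances $\Rf(\Box A)$ along so that a downward extension of a $\GL$-model still forces $\Box A$ at its new root. Direction $(1\Rightarrow 2)$ is the easy half: if $\GL_\omega\vdash\neg\Box A$, then Lemma~\ref{LemNegBox} yields $\GL\vdash\Diamond^{\cx(A)+1}\top\to\neg\Box A$. Since $s>\cx(A)$, the logic $\GL$ proves $\Diamond^s\top\to\Diamond^{\cx(A)+1}\top$ (a chain of length $s$ contains one of length $\cx(A)+1$; formally, iterate the $\GL$-provable transitivity $\Box p\to\Box\Box p$). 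On the other hand $\GL+\{\neg F_s\}$ proves $\neg F_s$, which is $\GL$-equivalent to $\Box^{s+1}\bot\land\Diamond^s\top$, so it proves $\Diamond^s\top$, hence $\Diamond^{\cx(A)+1}\top$, hence $\neg\Box A$.

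For $(2\Rightarrow 1)$ I would argue by contraposition. Assume $\GL_\omega\nvdash\neg\Box A$; I shall build a $\GL$-model whose root forces $\neg F_s\land\Box A$, which then witnesses $\GL+\{\neg F_s\}\nvdash\neg\Box A$ exactly as in the proof of Lemma~\ref{LemBox2}. By Lemma~\ref{LemNegBox}, $\GL\nvdash\bigwedge\Rf(\Box A)\to\neg\Box A$. One checks that the subformulas of $\bigwedge\Rf(\Box A)\to\neg\Box A$ of the form $\Box C$ are exactly those of $\Box A$, so $\cx(\bigwedge\Rf(\Box A)\to\neg\Box A)=\cx(A)+1$; hence Theorem~\ref{GLCompl} provides a $\GL$-model $\langle W,\sqsubset,r,\Vdash\rangle$ with $r\Vdash\Box^{\cx(A)+2}\bot\land\bigwedge\Rf(\Box A)\land\Box A$. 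Let $d$ be its depth, so $d\le\cx(A)+1\le s$. I then replace the root $r$ by a chain $r_0\sqsubset r_1\sqsubset\cdots\sqsubset r_{s-d}$ of fresh points, each assigned the valuation of $r$ and each placed below every element of $W\setminus\{r\}$. This is again a $\GL$-model, and its root $r_0$ satisfies $\Box^{s+1}\bot\land\Diamond^s\top$, i.e.\ $\neg F_s$.

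It remains to see $r_0\Vdash\Box A$ in the new model, and this is the step I expect to be the crux. The key claim is that for every $i\le s-d$ and every $C\in\Sub(\Box A)$, $r_i$ forces $C$ in the new model if and only if $r$ forced $C$ in the old one; I would prove this by induction starting from $r_{s-d}$ (which is bisimilar to the old $r$, since its successors are exactly $W\setminus\{r\}$ with unchanged generated submodels) and descending the chain, with a sub-induction on $C$. The only nontrivial case is $C=\Box D$ with $\Box D\in\Sub(\Box A)$; there the successors of $r_i$ are the $r_j$ with $i<j\le s-d$ together with all of $W\setminus\{r\}$, and the inductive hypotheses collapse the claim to the equivalence ``$r\Vdash\Box D$ iff $r\Vdash\Box D\land D$'', which holds because $\Box D\to D\in\Rf(\Box A)$ and $r\Vdash\bigwedge\Rf(\Box A)$. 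Taking $i=0$ and $C=\Box A$ then gives $r_0\Vdash\Box A$, as required.

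The main obstacle is precisely this last point: a naive downward extension of a model of $\Box A$ can destroy $\Box A$ at the new root, because the new chain points might falsify subformulas of $A$ that occur boxed in $A$. Starting instead from a model in which $r$ already satisfies every reflection instance in $\Rf(\Box A)$ — which is exactly what Lemma~\ref{LemNegBox} delivers — and taking the new points to be copies of $r$ is what makes those points transparent to the subformulas of $\Box A$ and lets the construction go through.
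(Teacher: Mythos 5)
Your proof is correct, but for the substantive direction $(2\Rightarrow 1)$ you take a genuinely different route from the paper. You argue by contraposition and build a Kripke countermodel: starting from Lemma~\ref{LemNegBox} you get a model whose root forces $\bigwedge\Rf(\Box A)\land\Box A$ (with bounded depth via Theorem~\ref{GLCompl}), and you then pad the model downward with a chain of copies of the root, using the reflection instances to show the copies are transparent to subformulas of $\Box A$, so that the new root forces $\neg F_s\land\Box A$. This is sound — it is essentially the same model surgery the paper performs in the $(5\Rightarrow 1)$ direction of Proposition~\ref{NonT} (the chain $r_0,r_1,\ldots$ together with Lemma~\ref{TLem}), and your identification of the reflection instances as the mechanism that keeps $\Box A$ true under downward extension is exactly the right point. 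The paper instead gives a three-line syntactic derivation: from $\GL+\{\neg F_s\}\vdash\neg\Box A$ it gets $\GL\vdash\Box A\to F_s$, necessitates to $\GL\vdash\Box\Box A\to\Box(\Box^{s+1}\bot\to\Box^s\bot)$, applies L\"ob's principle to conclude $\GL\vdash\Box\Box A\to\Box^{s+1}\bot$, hence $\GL\vdash\Box A\to\Box^{s+1}\bot$ and $\GL_\omega\vdash\neg\Box A$. The paper's argument is shorter and avoids any appeal to Lemma~\ref{LemNegBox} or to frame constructions; yours is longer but makes the semantic content explicit and produces the explicit countermodel (the same kind needed in Lemma~\ref{LemBox2} and Proposition~\ref{NonT}), at the cost of some care with the depth bookkeeping, which you handle only informally but consistently.
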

\begin{proof}
$(1 \Rightarrow 2)$: If $\GL_\omega \vdash \neg \Box A$, then by Lemma \ref{LemNegBox}, $\Diamond^{\cx(A)+1} \top \to \neg \Box A$ is proved in $\GL$, and thus $\GL + \{\neg F_s\} \vdash \neg \Box A$ because $s \geq \cx(A)+1$. 

$(2 \Rightarrow 1)$: 
Suppose $\GL + \{\neg F_s\} \vdash \neg \Box A$, then $\GL \vdash \Box A \to F_s$, that is, $\GL \vdash \Box A \to (\Box^{s+1} \bot \to \Box^s \bot)$. 
Then, $\GL \vdash \Box \Box A \to \Box(\Box^{s+1} \bot \to \Box^s \bot)$. 
By L\"ob's principle, we get $\GL \vdash \Box \Box A \to \Box^{s+1} \bot$. 
Then, $\GL \vdash \Box A \to \Box^{s+1} \bot$, and hence $\GL \vdash \Diamond^{s+1} \top \to \neg \Box A$. 
We obtain $\GL_\omega \vdash \neg \Box A$. 
\end{proof}

Here we introduce a notion that will be central in this section.

\begin{defn}
A modal formula $A$ is said to be \textit{nontrifling} if $\GL_\omega \nvdash \Box \Box A \to \Box A$. 
\end{defn}

Then, this notion is characterized in many ways as follows. 

\begin{prop}\label{NonT}
For any modal formula $A$ and any number $s > \cx(A)$, the following are equivalent: 
\begin{enumerate}
	\item $A$ is nontrifling. 
	\item $\GL_\omega \nvdash \Box A$ and $\GL_\omega \nvdash \neg \Box A$. 
	\item $\GLS \nvdash \Box A$ and $\GLS \nvdash \neg \Box A$. 
	\item $\GL + \{\neg F_s\} \nvdash \Box A$ and $\GL + \{\neg F_s\} \nvdash \neg \Box A$. 
	\item $\GL \nvdash A$ and $\GL \nvdash \bigwedge \Rf(\Box A) \to \neg \Box A$. 
\end{enumerate}
\end{prop}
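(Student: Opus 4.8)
The plan is to derive $(1)\Rightarrow(2)$, to check that $(2)$, $(3)$, $(4)$, $(5)$ are pairwise equivalent almost directly from the preparatory results, and then to close the circle with $(5)\Rightarrow(1)$, which is where the real work lies.

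The equivalences among $(2)$–$(5)$ are immediate once one notices that each of these four conditions is the conjunction of a ``$\Box A$-clause'' and a ``$\neg\Box A$-clause''. All four $\Box A$-clauses are equivalent to $\GL\nvdash A$: by Fact~\ref{LemBox} for $\GL_\omega$ and $\GLS$, and by Lemma~\ref{LemBox2} for $\GL+\{\neg F_s\}$ (here $s>\cx(A)$ is used). All four $\neg\Box A$-clauses are equivalent to $\GL\nvdash\bigwedge\Rf(\Box A)\to\neg\Box A$: by Lemma~\ref{LemNegBox} for $\GL_\omega$ and $\GLS$, and by Lemma~\ref{LemNegBox2} for $\GL+\{\neg F_s\}$. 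Hence $(2)\Leftrightarrow(3)\Leftrightarrow(4)\Leftrightarrow(5)$. For $(1)\Rightarrow(2)$ I argue contrapositively: if $\GL_\omega\vdash\Box A$ then trivially $\GL_\omega\vdash\Box\Box A\to\Box A$, while if $\GL_\omega\vdash\neg\Box A$ then by Lemma~\ref{LemNegBox} $\GL\vdash\Box A\to\Box^{\cx(A)+1}\bot$, so necessitation and the distribution axiom give $\GL\vdash\Box\Box A\to\Box^{\cx(A)+2}\bot$, and as $\Diamond^{\cx(A)+2}\top$ is an axiom of $\GL_\omega$ we get $\GL_\omega\vdash\neg\Box\Box A$ and hence $\GL_\omega\vdash\Box\Box A\to\Box A$; either way $A$ is not nontrifling.

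For $(5)\Rightarrow(1)$ I will use the (folklore) reduction of $\GL_\omega$ to $\GL$: because $\GL_\omega=\GL+\{\Diamond^n\top\mid n\in\omega\}$ is closed only under modus ponens and uniform substitution and each $\Diamond^n\top$ is variable-free, a derivation uses only finitely many of these axioms, so $\GL_\omega\vdash B$ iff $\GL\vdash\Diamond^k\top\to B$ for some $k\in\omega$. It therefore suffices to show that, assuming $\GL\nvdash A$ and $\GL\nvdash\bigwedge\Rf(\Box A)\to\neg\Box A$, we have $\GL\nvdash\Diamond^k\top\to(\Box\Box A\to\Box A)$ for every $k$. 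From $\GL\nvdash A$ and L\"ob's theorem, $\GL\nvdash\Box A\to A$, so there is a $\GL$-model $N$ with root $n$ and $n\Vdash\Box A\land\neg A$. From $\GL\nvdash\bigwedge\Rf(\Box A)\to\neg\Box A$ there is a $\GL$-model $M$ with root $r$ and $r\Vdash\Box A\land\bigwedge\Rf(\Box A)$; as $\Box A\to A$ is a conjunct of $\bigwedge\Rf(\Box A)$, also $r\Vdash A$, so $A$ holds at every world of $M$ and $r\Vdash\Box C\to C$ for each $\Box C\in\Sub(A)$. Given $k$, I build a $\GL$-model $P$: take fresh worlds $c_0,\dots,c_k$ with $c_k\sqsubset c_{k-1}\sqsubset\cdots\sqsubset c_0\sqsubset r$ placed below $r$ (so every $c_i$ sees all of $M$), attach $N$ as a new branch above $c_k$ (add $c_k\sqsubset n$, otherwise disjoint), make $c_k$ the root, and let the valuation at each $c_i$ copy the valuation at $r$. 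An induction on subformulas of $A$, whose only nontrivial step is the box-case and which there uses exactly the reflection $r\Vdash\Box C\to C$ for $\Box C\in\Sub(A)$, shows $A$ is true at every $c_i$. It follows that every world strictly above $c_k$ satisfies $\Box A$ (the $c_i$'s and the $M$-part since $A$ holds throughout there; the $N$-part since $n\Vdash\Box A$), hence $c_k\Vdash\Box\Box A$; moreover $n\Vdash\neg A$ gives $c_k\Vdash\neg\Box A$ and the chain gives $c_k\Vdash\Diamond^k\top$. So $c_k\nVdash\Diamond^k\top\to(\Box\Box A\to\Box A)$, and by soundness of $\GL$ this implication is not a theorem of $\GL$, as desired.

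The main obstacle is exactly the construction of $P$: one must realize $\Box\Box A\land\neg\Box A$ at the root of a $\GL$-model of arbitrarily large depth, and the obvious move of stretching $M$ (or $N$) downwards by a long chain fails because $A$ need not stay true at the new interior worlds. The hypothesis that $\Box A\land\bigwedge\Rf(\Box A)$ is $\GL$-consistent is precisely what repairs this: it furnishes a world $r$ that is ``reflectively transparent'' for all box-subformulas of $A$, which is exactly the condition under which copying $r$'s valuation down the chain preserves the truth of $A$. A secondary point to record explicitly is the reduction of $\GL_\omega$-derivability to $\GL$-derivability used above, which is what turns the family of finite countermodels into genuine $\GL_\omega$-unprovability.
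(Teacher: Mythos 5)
Your proof is correct. The treatment of $(1)\Rightarrow(2)$ and of the equivalences $(2)\Leftrightarrow(3)\Leftrightarrow(4)\Leftrightarrow(5)$ coincides with the paper's. For $(5)\Rightarrow(1)$ the underlying combinatorics is the same as the paper's --- glue the model of $\Box A\land\bigwedge\Rf(\Box A)$ on top of a descending chain whose worlds copy its root's valuation, attach the model of $\Box A\land\neg A$ at the new root, and use the reflection formulas at $r$ to push the truth of $A$ down the chain --- but the packaging differs: the paper builds a \emph{single infinite} model (an infinite descending chain below $r_0$) whose root forces every $\Diamond^n\top$ and hence every theorem of $\GL_\omega$, whereas you build one finite $\GL$-model for each $k$ and invoke the finitary reduction ``$\GL_\omega\vdash B$ iff $\GL\vdash\Diamond^k\top\to B$ for some $k$''. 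Your version stays inside the paper's official (finite) notion of $\GL$-frame and avoids having to argue that the infinite frame still validates $\GL$, at the price of making the folklore reduction explicit; both routes are sound. One small overstatement to note: your induction yields $c_i\Vdash B\Leftrightarrow r\Vdash B$ only for $i<k$, because $c_k$ also sees $N$, so the backward direction of the box case has an extra, unhandled subcase there; indeed $c_k$ may genuinely falsify $A$ (e.g.\ when some world of $N$ refutes a $C$ with $r\Vdash\Box C$). This is harmless, since the claim ``$A$ holds at every $c_i$'' is only needed at the worlds strictly above $c_k$, i.e.\ for $i\le k-1$, and for those your induction goes through exactly as written; but the statement should be restricted accordingly.
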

\begin{proof}
$(1 \Rightarrow 2)$: 
We prove the contrapositive. 
If $\GL_\omega \vdash \Box A$, then $\GL_\omega \vdash \Box \Box A \to \Box A$ is obvious. 
Suppose $\GL_\omega \vdash \neg \Box A$. 
By Lemma \ref{LemNegBox}, we have $\GL \vdash \Diamond^{\cx(A)+1} \top \to \neg \Box A$. 
Then, $\GL \vdash \Diamond^{\cx(A)+2} \top \to \neg \Box \Box A$. 
We obtain $\GL_\omega \vdash \neg \Box \Box A$, and hence $\GL_\omega \vdash \Box \Box A \to \Box A$. 

$(2 \Leftrightarrow 3 \Leftrightarrow 4 \Leftrightarrow 5)$: By Fact \ref{LemBox} and Lemmas \ref{LemNegBox}, \ref{LemBox2}, and \ref{LemNegBox2}. 

$(5 \Rightarrow 1)$: Suppose $\GL \nvdash A$ and $\GL \nvdash \bigwedge \Rf(\Box A) \to \neg \Box A$. 
Then, $\GL \nvdash \Box A \to A$. 
By Theorem \ref{GLCompl}, there exist $\GL$-models $M = \langle W, \sqsubset, r, \Vdash \rangle$ and $M_0 = \langle W_0, \sqsubset_0, r_0, \Vdash_0 \rangle$ such that $r \Vdash \Box A \land \neg A$ and $r_0 \Vdash_0 \Rf(\Box A) \land \Box A$. 
We merge these two models $M$ and $M_0$ into one model $M^*$, which is illustrated in Fig \ref{Fig0}. 
We give the precise definition of the $\GL$-model $M^* = \langle W^*, \sqsubset^*, r^*, \Vdash^* \rangle$ as follows: 
\begin{itemize}
	\item $W^* = W \cup W_0 \cup \{r_i \mid i \geq 1\} \cup \{r^*\}$, 
	\item $\sqsubset^*$ is the transitive closure of 
	\[
		\sqsubset \cup \sqsubset_0 \cup \{(r_i, r_j) \mid i > j\} \cup \{(r^*, r_i) \mid i \in \omega\} \cup \{(r^*, r)\},
	\] 
	\item for $a \in W$, $a \Vdash^* p$ if and only if $a \Vdash p$, \\
	for $a \in W_0$, $a \Vdash^* p$ if and only if $a \Vdash_0 p$, and \\
	for $a \in \{r_i \mid i \geq 1\} \cup \{r^*\}$, $a \Vdash^* p$ if and only if $r_0 \Vdash_0 p$. 
\end{itemize}

\begin{figure}[th]
\centering
\begin{tikzpicture}

\coordinate (root) at (0, -1); 
\coordinate (leftroot) at (-2, 2); 
\coordinate (rightroot0) at (2, 2); 
\coordinate (rightroot1) at (2, 1.5); 
\coordinate (rightroot2) at (2, 1); 
\coordinate (rightroot3) at (2, 0.7); 
\coordinate (right) at (2, 0); 

\fill (root) circle (2pt) (leftroot) circle (2pt) (rightroot0) circle (2pt) (rightroot1) circle (2pt) (rightroot2) circle (2pt) ;

\draw (root) -- (leftroot); 
\draw (root) -- (right); 
\draw (rightroot3) -- (rightroot2) -- (rightroot1) -- (rightroot0); 

\draw [thick] (leftroot) -- (-0.5, 4) -- (-3.5, 4) -- cycle; 
\draw [thick] (rightroot0) -- (0.5, 4) -- (3.5, 4) -- cycle;

\draw (root) node [left] {$r^*$};
\draw (leftroot) node [left] {$r$};
\draw (rightroot0) node [left] {$r_0$};
\draw (rightroot1) node [left] {$r_1$};
\draw (rightroot2) node [left] {$r_2$};
\draw (right) node [above] {$\vdots$};

\draw (leftroot) node [right] {$\Box A \land \neg A$};
\draw (rightroot0) node [right] {$\bigwedge \Rf(\Box A) \land \Box A$};

\draw (-2, 4) node [above] {$W$};
\draw (2, 4) node [above] {$W_0$};

\end{tikzpicture}
\caption{The $\GL$-model $M^*$}\label{Fig0}
\end{figure}

It is easy to show that $r \Vdash^* \Box A \land \neg A$ and $r_0 \Vdash^* \Rf(\Box A) \land \Box A$. 
Since $r^* \Vdash^* \Diamond^n \top$ for all $n \in \omega$, every theorem of $\GL_\omega$ is true in $r^*$. 
Thus, it suffices to show that $r^* \nVdash^* \Box \Box A \to \Box A$. 
For this purpose, we prove the following lemma. 

\begin{lem}\label{TLem}
For any $i \in \omega$ and any subformula $B$ of $\Box A$, $r_i \Vdash^* B$ if and only if $r_0 \Vdash^* B$. 
\end{lem}
\begin{proof}
We prove the lemma by induction on $i$. 
For $i = 0$, the lemma is trivial. 
Suppose that the lemma holds for $i$, and we prove the case of $i + 1$ by induction on the construction of the subformula $B$ of $\Box A$.  
The case that $B$ is a propositional variable is obvious from the definition of $\Vdash^*$. 
The cases for propositional connectives are easily proved by the induction hypothesis. 
We only give a proof of the case that $B$ is of the form $\Box C$, that is, we prove $r_{i+1} \Vdash^* \Box C$ if and only if $r_0 \Vdash^* \Box C$. 

$(\Rightarrow)$: 
Suppose $r_{i+1} \Vdash^* \Box C$, then $r_{i+1} \Vdash^* \Box \Box C$ because $\sqsubset^*$ is transitive. 
Since $r_{i+1} \sqsubset^* r_0$, we obtain $r_0 \Vdash^* \Box C$. 

$(\Leftarrow)$: 
Suppose $r_{i+1} \nVdash^* \Box C$. 
Then, $a \nVdash^* C$ for some $a \in W^*$ with $r_{i+1} \sqsubset^* a$. 
We distinguish the following three cases: 
\begin{itemize}
	\item $r_0 \sqsubset^* a$. \\
	Then, $r_0 \nVdash^* \Box C$ trivially holds. 
	\item $a = r_0$. \\
	We have $r_0 \Vdash^* \Box C \to C$ because $r_0 \Vdash^* \Rf(\Box A)$. 
	Thus, $r_0 \nVdash^* \Box C$.  
	\item 	$a = r_j$ for $0 < j < i+1$. \\
	By the induction hypothesis for $j$, we have $r_0 \nVdash^* C$. 
	Then, $r_j \nVdash^* \Box C$. 
	By the induction hypothesis for $j$ again, we obtain $r_0 \nVdash^* \Box C$. \qedhere
\end{itemize}
\end{proof}

We are ready to prove $r^* \nVdash^* \Box \Box A \to \Box A$. 
Let $a \in W^*$ be any element such that $r^* \sqsubset^* a$. 
If $a \in W$, then $a \Vdash^* \Box A$ because $r \Vdash^* \Box A$. 
If $a \in W_0$, then $a \Vdash^* \Box A$ because $r_0 \Vdash^* \Box A$. 
If $a \in \{r_i \mid i \geq 1\}$, then $a \Vdash^* \Box A$ by Lemma \ref{TLem}. 
Therefore, we obtain $r^* \Vdash^* \Box \Box A$. 
Since $r^* \sqsubset^* r$ and $r \nVdash^* A$, we get $r^* \nVdash^* \Box A$. 
Thus, we conclude $r^* \nVdash^* \Box \Box A \to \Box A$, and hence $A$ is nontrifling. 
\end{proof}

If a formula $A$ does not contain $\Box$, then the following proposition holds. 

\begin{prop}\label{ContNonT}
A propositional formula $A$ is contingent if and only if $A$ is nontrifling. 
\end{prop}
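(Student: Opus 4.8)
The plan is to prove the two directions separately, using the characterization of nontrifling formulas given in Proposition \ref{NonT} (particularly condition (5)) together with the fact that a $\Box$-free formula is, syntactically, just a classical propositional formula — so in $\GL$ its provability coincides with being a tautology, and $\cx(A) = 0$, $\Rf(\Box A) = \{\Box A \to A\}$.

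For the forward direction, suppose $A$ is contingent. Since $A$ is $\Box$-free and a tautology in classical logic iff it is a theorem of $\GL$ (as $\GL$ extends classical propositional logic conservatively over $\Box$-free formulas), $A$ being not a tautology gives $\GL \nvdash A$. For the second disjunct of condition (5) of Proposition \ref{NonT}, I must show $\GL \nvdash (\Box A \to A) \to \neg \Box A$, i.e.\ $\GL \nvdash \Box A \to (A \to \neg \Box A)$, which since $\GL \vdash \Box A \to \Box\Box A$ reduces to showing $\GL \nvdash \Box A \to \neg A$. Equivalently I want a $\GL$-model with a root forcing $\Box A \land A$. Take any world $w$ forcing $A$ (possible since $A$ is not unsatisfiable) and put it atop a chain; actually the cleanest route is: take a one-point $\GL$-frame $\{r\}$ with empty accessibility and a valuation making $A$ true at $r$ (available since $A$ is satisfiable); then $r \Vdash \Box B$ vacuously for every $B$, so $r \Vdash \Box A \land A$, hence $r \Vdash \bigwedge\Rf(\Box A) \land \Box A$, and $\GL \nvdash \bigwedge\Rf(\Box A) \to \neg\Box A$. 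By Proposition \ref{NonT}, $A$ is nontrifling.

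For the converse, suppose $A$ is nontrifling. By condition (5) of Proposition \ref{NonT}, $\GL \nvdash A$, so $A$ is not a classical tautology. It remains to show $A$ is not unsatisfiable, i.e.\ $\GL \nvdash \neg A$. If $A$ were unsatisfiable then $\GL \vdash \neg A$, so $\GL \vdash \Box A \to A$ (since $\GL \vdash \neg A \to (\Box A \to A)$), which by Fact \ref{GLvsGLS} gives $\GLS \vdash \Box A$; but also $\GL \vdash \neg A$ yields $\GL \vdash \neg\Box\bot \to \neg\Box A$ is not quite what I want — more directly, from $\GL \vdash \neg A$ we get $\GL \vdash \Box\neg A$ by necessitation, and since in $\GL$ one proves $\Box\neg A \leftrightarrow \neg\Diamond A$, while consistency of the relevant extension is irrelevant; the cleanest finish is: $\GL \vdash \neg A$ implies $\GL \vdash A \to \bot$, hence $\GL \vdash \Box A \to \Box\bot$, hence $\GL \vdash \Diamond\top \to \neg\Box A$, i.e.\ $\GL \vdash \Diamond^{\cx(A)+1}\top \to \neg\Box A$ (using $\cx(A)=0$), which by Lemma \ref{LemNegBox} gives $\GL_\omega \vdash \neg\Box A$, contradicting nontrifling (via condition (2) of Proposition \ref{NonT}). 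Hence $A$ is contingent.

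The main obstacle I anticipate is bookkeeping the two implications inside condition (5) of Proposition \ref{NonT} correctly — in particular making sure that for a $\Box$-free $A$ the set $\Rf(\Box A)$ is exactly $\{\Box A \to A\}$ and that $\cx(A) = 0$, so that the displays $\bigwedge\Rf(\Box A) \to \neg\Box A$ and $\Diamond^{\cx(A)+1}\top \to \neg\Box A$ simplify as claimed. Once that is pinned down, both directions are short: the forward direction uses a one-point $\GL$-model where all boxes are vacuously true, and the backward direction uses necessitation plus L\"ob-free reasoning ($\Box A \to \Box\bot$ from $A \to \bot$) together with Lemma \ref{LemNegBox}. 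No genuinely new construction is needed beyond what Proposition \ref{NonT} already supplies.
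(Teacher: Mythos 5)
Your proof is correct and follows essentially the same route as the paper: both directions reduce to the characterizations in Proposition \ref{NonT} via small Kripke models (your one-point model verifying clause (5) directly, in place of the paper's two-point model verifying $\GL_\omega \nvdash \neg \Box A$ — an immaterial difference) together with Lemma \ref{LemNegBox}. The only blemish is the discarded aside claiming that $\GL \vdash \Box A \to A$ yields $\GLS \vdash \Box A$ by Fact \ref{GLvsGLS} — that would require $\GL \vdash (\Box A \to A) \to \Box A$, not the converse — but your final argument does not rely on this and stands on its own.
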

\begin{proof}
$(\Rightarrow)$: 
Let $\mathsf{v}_0$ be a truth assignment in classical propositional logic such that $\mathsf{v}_0(A)$ is false. 
Let $\langle W_0, \sqsubset_0, r_0, \Vdash_0 \rangle$ be a $\GL$-model satisfying $W_0 = \{r_0\}$, $\sqsubset_0 = \varnothing$, and $r_0 \Vdash_0 p$ if and only if $\mathsf{v}_0(p)$ is true. 
Then $r_0 \nVdash_0 A$, and hence $\GL \nvdash A$. 

Let $\mathsf{v}_1$ be a truth assignment such that $\mathsf{v}_1(A)$ is true. 
Let $\langle W_1, \sqsubset_1, r_1, \Vdash_1 \rangle$ be a $\GL$-model satisfying $W_1 = \{r_1, a\}$, $r_1 \sqsubset_1 a$, and for each $w \in W_1$, $w \Vdash_1 p$ if and only if $\mathsf{v}_1(p)$ is true. 
Then, $r_1 \Vdash_1 \Diamond \top \land \Box A$. 
Since $\cx(A) = 0$, we have $\GL \nvdash \Diamond^{\cx(A)+1} \top \to \neg \Box A$. 
By Lemma \ref{LemNegBox}, $\GL_\omega \nvdash \neg \Box A$. 
By Proposition \ref{NonT}, $A$ is nontrifling. 

$(\Leftarrow)$: 
If $A$ is a tautology, then $\GL \vdash A$. 
If $A$ is unsatisfiable, then $\GL \vdash \neg A$, and $\GL \vdash \Diamond \top \to \neg \Box A$. 
Hence, $\GL_\omega \vdash \neg \Box A$. 
In either case, $A$ is not nontrifling by Proposition \ref{NonT}. 
\end{proof}

\subsection{A modal version of the FGH theorem}

We are ready to prove an extension of Theorem \ref{Thm1}. 

\begin{thm}\label{MT}
For any modal formula $A$, if $A$ is nontrifling, then for any $\Sigma_1$ sentence $\sigma$, there exists an arithmetical interpretation $f$ such that 
\begin{description}
	\item [(a)] $\PA \vdash \sigma \to f_T(\Box A)$, and 
	\item [(b)] $\PA + \Con_T^{\cx(A) + 1} \vdash f_T(\Box A) \to \sigma$. 
\end{description}
\end{thm}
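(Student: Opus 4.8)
The natural approach is to adapt Solovay's completeness proof, following the strategy already used in the proof of Proposition~\ref{NonT}. By hypothesis, $A$ is nontrifling, so by Proposition~\ref{NonT}(5), $\GL\nvdash A$ and $\GL\nvdash\bigwedge\Rf(\Box A)\to\neg\Box A$, hence $\GL\nvdash\Box A\to A$. The plan is to run Solovay's argument on a Kripke model witnessing $\GL\nvdash\Box A\to A$ — essentially the model $M$ appearing in Proposition~\ref{NonT}, where $r\Vdash\Box A\wedge\neg A$. The key point of the FGH-style construction is that we do \emph{not} attach a Solovay sentence to the root $r$ itself; instead we let the root's ``limit'' behaviour be governed by the $\Sigma_1$ sentence $\sigma$. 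Concretely, one builds a primitive recursive function $h$ (a ``Solovay function'') climbing the frame $\langle W,\sqsubset,r\rangle$ together with a branch that ``pretends to sit at $r$'' as long as $\sigma$ has not yet been witnessed, and jumps into a node strictly above $r$ once (and if) a witness to $\sigma$ appears. Then define $f(p)$ to be the disjunction of the Solovay sentences $S_a$ over those $a$ with $a\Vdash p$, \emph{plus} the disjunct $\sigma$ whenever $r\Vdash p$; the arithmetical interpretation $f_T$ is the induced extension.

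The main verification, as in Solovay, is the pair of arithmetized lemmas: (i) if $\sigma$ is false then $\PA$ proves that exactly one $S_a$ holds and the limit node is $r$ (so $f_T$ mimics forcing at $r$); (ii) if $\sigma$ is true (hence $\Sigma_1$-provably so), then $\PA\vdash\sigma\to S_b$ for some fixed $b$ with $r\sqsubset b$, and more generally $f_T$ mimics forcing at that node $b$. Since $b\Vdash\Box A$ (because $r\Vdash\Box A$ and $b$ lies above $r$, and $\Box A$ is upward-persistent), this yields $\PA\vdash\sigma\to f_T(\Box A)$, giving clause~(a). For clause~(b), one argues: if $\neg\sigma$, then the limit is $r$, so working inside $\PA+\Con_T^{\cx(A)+1}$ one shows $f_T(\Box A)$ fails. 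Here the consistency strength $\Con_T^{\cx(A)+1}$ is exactly what is needed so that $\PA$ can verify that the height-$(\cx(A)+1)$ truncation of the Solovay model behaves correctly at $r$ — this is where Theorem~\ref{GLCompl}'s bound $r\Vdash\Box^{\cx(A)+1}\bot$ is used, ensuring the relevant boxed subformulas of $\Box A$ are ``decided'' within $\cx(A)+1$ steps, so that $\Con_T^{\cx(A)+1}$ suffices to run the standard Solovay argument that $r\nVdash\Box A$ implies $\neg f_T(\Box A)$ provably.

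More carefully for~(b): the standard Solovay lemmas give, for each node $a$ and each subformula $B$ of $\Box A$, that $\PA$ proves $S_a\to(B^{\N}\leftrightarrow f_T(B))$ under the hypothesis that $a$ is the limit node, where truth of boxed subformulas requires bounded consistency assumptions; assembling these over the $\cx(A)+1$ ``levels'' of the frame above $r$ uses precisely $\Con_T^{\cx(A)+1}$. Since $r\nVdash A$ but we cannot simply say $r\nVdash\Box A$ — indeed $r\Vdash\Box A$! — the point is subtler: we need that $f_T(\Box A)\to\sigma$, equivalently (contrapositive) $\neg\sigma\to\neg f_T(\Box A)$. Under $\neg\sigma$ the limit is $r$; we want $f_T(\Box A)=\PR_T(\gn{f_T(A)})$ to fail, i.e.\ $T\nvdash f_T(A)$, verifiably in $\PA+\Con_T^{\cx(A)+1}$. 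This is where one uses that $r\Vdash\neg A$: the Solovay machinery gives $\PA\vdash S_r\to\neg f_T(A)^{\?}$... actually one shows $\PA+\Con_T^{\cx(A)+1}$ proves ``if the limit is $r$ then $\neg\PR_T(\gn{f_T(A)})$'', because $A$ fails at $r$ and $A$'s boxed subformulas only reach depth $\cx(A)+1$.

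\textbf{Main obstacle.} The delicate part is the bookkeeping that makes clause~(b) go through with exactly the bound $\cx(A)+1$: one must set up the Solovay function so that the finitely many boxed subformulas of $\Box A$ are all ``resolved'' by climbing at most $\cx(A)+1$ levels above $r$, and then check that $\PA$ genuinely verifies the forcing-matches-truth lemma at $r$ using only $\Con_T^{\cx(A)+1}$ rather than full $\Con_T$. The interplay between ``not attaching a Solovay sentence to $r$'' (so that $\sigma$ controls the root) and the usual Solovay fixed-point definition of $h$ needs to be arranged so that $h$ is still primitive recursive and its basic properties ($h$ is nondecreasing in the $\sqsubset$-sense, takes a limit, etc.) remain $\PA$-provable; this is the technical heart and the place where the construction genuinely differs from both the textbook Solovay proof and the plain FGH proof.
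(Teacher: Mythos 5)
Your plan is in the right family (a Solovay-style construction), but as described it proves only clause (a); clause (b) fails, and the missing ingredient is the Rosser-style \emph{witness comparison between $\sigma$ and $\PR_T(\gn{f_T(A)})$}, which is the engine of the FGH theorem. In your construction the function's departure from the root is triggered by $\sigma$ alone (possibly together with ordinary Solovay moves), so the Solovay sentences above $r$ are, up to $\PA$-provable equivalence, built from $\sigma$ and plain provability statements with no race against a $T$-proof of $f_T(A)$. Already for $A=p$ this breaks: the countermodel to $\Box p\to p$ is $r\sqsubset a$ with $a\Vdash p$, $r\nVdash p$, your $f(p)$ is the sentence $S_a$ saying that $h$ eventually leaves $r$, and $\PA\vdash S_a\leftrightarrow\sigma$ (or $S_a\leftrightarrow\sigma\lor\PR_T(\gn{\neg S_a})$ if Solovay moves are kept). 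Taking $\sigma=\neg\Con_T$ (resp.\ $\sigma = 0=1$ in the second variant), clause (b) would require $\PA+\Con_T\vdash\neg\PR_T(\gn{\neg\Con_T})$, i.e.\ that $\PA+\Con_T$ proves the consistency of $T+\Con_T$, contradicting the second incompleteness theorem. Your own text flags exactly this spot (``$\PA\vdash S_r\to\neg f_T(A)$?'') and then asserts the needed implication without argument; it is false as stated, because the limit sitting at $r$ with $r\Vdash\neg A$ gives no handle whatsoever on $T\nvdash f_T(A)$ — the embedding lemma genuinely fails at the root, and no amount of bounded consistency fixes that by itself.

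The paper's proof differs in two structural ways that repair this. First, it uses \emph{two} countermodels rather than your single countermodel to $\Box A\to A$: one with $r_0\Vdash\Box^{\cx(A)+1}\bot\land\neg A$ and one with $r_1\Vdash\bigwedge\Rf(\Box A)\land\Box A$, glued above a common root $0$ with fresh nodes $\pair{0,0}$, $\pair{1,0}$ inserted below $r_0,r_1$. Second, the Solovay function's first move is decided by the race: if $\sigma\prec\PR_T(\gn{f_T(A)})$ it enters the $\Box A$-side at $\pair{1,0}$ (whence (a), using that $r_1$ satisfies the reflection formulas $\Rf(\Box A)$ so the embedding lemma applies to boxed subformulas there); if $\PR_T(\gn{f_T(A)})\pc\sigma$ it enters the $\neg A$-side, where $\Con_T^{\cx(A)+1}$ pins the limit to $\pair{0,0}$ and then $\pair{0,0}\sqsubset\pair{0,1}\Vdash\neg A$ yields $\neg\PR_T(\gn{f_T(A)})$, contradicting the case hypothesis — that contradiction is exactly how (b) is obtained. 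Note also that the self-reference via the recursion theorem is needed not just to define $\lambda$ but to let $h$ consult proofs of $f_T(A)$ itself, and that a single countermodel to $\Box A\to A$ from Theorem \ref{GLCompl} would only give the depth bound $\cx(A)+2$, weakening (b) to $\Con_T^{\cx(A)+2}$.
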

\begin{proof}
Suppose that $A$ is nontrifling and let $\sigma$ be any $\Sigma_1$ sentence. 
We may assume that $\sigma$ is of the form $\exists x \, \delta(x)$ for some $\Delta_0$ formula $\delta(x)$. 
By Proposition \ref{NonT}, we have $\GL \nvdash A$ and $\GL \nvdash \bigwedge \Rf(\Box A) \to \neg \Box A$. 
By Theorem \ref{GLCompl}, there exist $\GL$-models $M_0 = \pair{W_0, \sqsubset_0, r_0, \Vdash_0}$ and $M_1 = \pair{W_1, \sqsubset_1, r_1, \Vdash_1}$ such that $r_0 \Vdash_0 \Box^{\cx(A)+1} \bot \land \neg A$ and $r_1 \Vdash_1 \bigwedge \Rf(\Box A) \land \Box A$.

Let $\langle \cdot, \cdot \rangle$ be a natural injective mapping from $\omega^2$ to $\omega$. 
Also let $\pi_0$ and $\pi_1$ be natural mappings such that $\pi_0(\pair{i, j}) = i$ and $\pi_1(\pair{i, j}) = j$. 
We may assume that $0$ is not in the range of the mapping $\langle \cdot, \cdot \rangle$ and is not in the domain of the mappings $\pi_0$ and $\pi_1$. 
Let $n_0$ and $n_1$ be the cardinalities of $W_0$ and $W_1$, respectively. 
Without loss of generality, we may assume 
\begin{itemize}
	\item $W_0 = \{\pair{0, 1}, \pair{0, 2}, \ldots, \pair{0, n_0}\}$, $r_0 = \pair{0, 1}$, and 
	\item $W_1 = \{\pair{1, 1}, \pair{1, 2}, \ldots, \pair{1, n_1}\}$, $r_1 = \pair{1, 1}$. 
\end{itemize}
We merge the two models $M_0$ and $M_1$ into one $\GL$-model $M$, which is visualized in Figure \ref{Fig1}. 
The definition of $M = \pair{W', \sqsubset', 0, \Vdash'}$ is as follows: 
\begin{itemize}
	\item $W' = W_0 \cup W_1 \cup \{0, \pair{0, 0}, \pair{1, 0}\}$, 
	\item $\sqsubset' = \sqsubset_0 \cup \sqsubset_1 \cup \{(0, a) \mid a \in W' \setminus \{0\}\} \cup \{(\pair{i, 0}, a) \mid i \in \{0, 1\}, a \in W_i\}$, 
	\item For $\pair{i, j} \in W_i$, $\pair{i, j} \Vdash' p$ if and only if $\pair{i, j} \Vdash_i p$. \\
	Also $0 \Vdash' p$, and $\pair{i, 0} \Vdash' p$ if and only if $\pair{i, 1} \Vdash_i p$. 
\end{itemize}

\begin{figure}[th]
\centering
\begin{tikzpicture}

\coordinate (root) at (0, 0); 
\coordinate (leftroot0) at (-2, 1); 
\coordinate (rightroot0) at (2, 1); 
\coordinate (leftroot1) at (-2, 2); 
\coordinate (rightroot1) at (2, 2); 

\fill (root) circle (2pt) (leftroot0) circle (2pt) (rightroot0) circle (2pt) (leftroot1) circle (2pt) (rightroot1) circle (2pt);

\draw (root) -- (leftroot0) -- (leftroot1); 
\draw (root) -- (rightroot0) -- (rightroot1); 

\draw [thick] (leftroot1) -- (-0.5, 4) -- (-3.5, 4) -- cycle; 
\draw [thick] (rightroot1) -- (0.5, 4) -- (3.5, 4) -- cycle;

\draw (root) node [left] {$0$};
\draw (leftroot0) node [left] {$\pair{0, 0}$};
\draw (leftroot1) node [left] {$\pair{0, 1}$};
\draw (rightroot0) node [left] {$\pair{1, 0}$};
\draw (rightroot1) node [left] {$\pair{1, 1}$};

\draw (leftroot1) node [right] {$\Box^{\cx(A)+1} \bot \land \neg A$};
\draw (rightroot1) node [right] {$\bigwedge \Rf(\neg \Box A) \land \Box A$};

\draw (-2, 4) node [above] {$W_0$};
\draw (2, 4) node [above] {$W_1$};

\end{tikzpicture}
\caption{The $\GL$-model $M'$}\label{Fig1}
\end{figure}
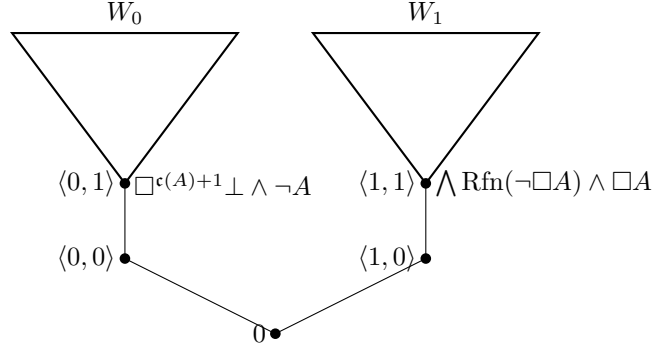

Then, it is easy to show that $\pair{0,1} \Vdash' \Box^{\cx(A) + 1} \bot \land \neg A$ and $\pair{1,1} \Vdash' \bigwedge \Rf(\neg \Box A) \land \Box A$. 
As in the usual proof of Solovay's arithmetical completeness theorem, we recursively define a primitive recursive function $h : \omega \to W'$ by referring to $T$-proofs. 
In the definition of $h$, we use the formula $\lambda(x) \equiv \exists y \, \forall z \, {\geq} y \, (h(z) = x)$ and the sentence $f_T(A)$ for the arithmetical interpretation $f$ defined by $f(p) \equiv \bigvee_{\substack{a \in W' \\ a \Vdash' p}} \lambda(\num{a})$. 
This is done by the aid of the Fixed Point Lemma or the recursion theorem as in the proof of Solovay's theorem because such $\lambda(x)$ and $f_T(A)$ are effectively computable from $h$ (see Boolos \cite{Boo}). 

Here we give the definition of the function $h$. 
Let $h(0) : = 0$. 
Suppose that the value of $h(x)$ has already been defined. 
We define the value of $h(x+1)$. 
If $\forall y \, {\leq}\, x \, \neg \delta(y)$ holds and there exists no $T$-proof of $f_T(A)$ smaller than or equal to $x$, let $h(x+1) : = 0$. 
That is, the output of $h$ remains $0$ unless the smallest witness of $\delta(x)$ or the smallest $T$-proof of $f_T(A)$ appears. 

After such a witness appears, $h$ changes its value.
At the first stage when the smallest witness $x$ of $\delta(x)$ or the smallest $T$-proof $x$ of $f_T(A)$ appears, we distinguish the following two cases:
\begin{itemize}
	\item Case 1: $\forall y \, {<} \, x \, \neg \delta(y)$ holds and $x$ is the smallest $T$-proof of $f_T(A)$. \\
	Let $h(x+1) : = \pair{0, 0}$. 
	
	\item Case 2: $\forall y \, {<} \, x \, \neg \delta(y)$ and $\delta(x)$ hold and there is no proof of $f_T(A)$ less than or equal to $x$. \\
	Let $h(x+1) : = \pair{1, 0}$. 

\end{itemize}
That is, Cases 1 and 2 correspond to the situations $\PR_T(\gn{f_T(A)}) \pc \sigma$ and $\sigma \prec \PR_T(\gn{f_T(A)})$, respectively. 

After that, we define the value of $h(x+1)$ as follows: 
\[
	h(x+1) : = \begin{cases} a & \text{if}\ h(x) \sqsubset' a\ \&\ x\ \text{is a}\ T\text{-proof of}\ \neg \lambda(\num{a}); \\ h(x) & \text{otherwise.} \end{cases}
\]

The definition of $h$ is hereby completed. 

We introduce three lemmas. 
The first lemma concerns the general properties of the function $h(x)$, the formula $\lambda(x)$, and the arithmetical interpretation $f_T$. 

\begin{lem}\label{L1}
Let $a, b \in W'$. 
\begin{enumerate}
	\item If $a \neq b$, then $\PA \vdash \lambda(\num{a}) \to \neg \lambda(\num{b})$, 
	\item $\displaystyle \PA \vdash h(x) = \num{a} \to \lambda(\num{a}) \lor \bigvee_{a \sqsubset' c} \lambda(\num{c})$, 
	\item $\displaystyle \PA \vdash \PR_T(\gn{f_T(A)}) \pc \sigma \leftrightarrow \bigvee_{\pi_0(c) = 0} \lambda(\num{c})$, 
	\item $\displaystyle \PA \vdash \sigma \prec \PR_T(\gn{f_T(A)}) \leftrightarrow \bigvee_{\pi_0(c) = 1} \lambda(\num{c})$, 
	\item If $a \sqsubset' b$, then $\PA \vdash \lambda(\num{a}) \to \neg \PR_T(\gn{\neg \lambda(\num{b})})$, 
	\item If $\pi_1(a) \geq 1$, then $\PA \vdash \lambda(\num{a}) \to \PR_T(\gn{\neg \lambda(\num{a})})$, 
	\item If $\pi_1(a) \geq 1$, then $\PA \vdash \lambda(\num{a}) \to \PR_T(\gn{\bigvee_{a \sqsubset' c} \lambda(\num{c})})$. 
\end{enumerate}
\end{lem}
\begin{proof}
Except the implications $\leftarrow$ in Clauses 3 and 4, these statements are proved in a similar way as in the usual proof of Solovay's arithmetical completeness theorem (cf.~\cite{AB,Boo,JD,Sol}). 

For the implication $\leftarrow$ in Clause 3, we argue in $\PA$: 
Suppose that $\lambda(\num{c})$ holds and $\pi_0(c) = 0$. 
Then, $h(k) = c$ for some $k$. 
If $\neg \sigma$ and $\neg \PR_T(\gn{f_T(A)})$ hold, then $\forall x \, h(x) = 0$ holds, a contradiction. 
Hence, $\PR_T(\gn{f_T(A)}) \pc \sigma$ or $\sigma \prec \PR_T(\gn{f_T(A)})$ holds. 
By Clause 1, we have $\neg \bigvee_{\pi_0(d) = 1} \lambda(\num{d})$. 
Then, by the implication $\to$ in Clause 4, $\sigma \prec \PR_T(\gn{f_T(A)})$ does not hold. 
Therefore, $\PR_T(\gn{f_T(A)}) \pc \sigma$ holds. 

The implication $\leftarrow$ in Clause 4 is also proved similarly. 
\end{proof}

The second lemma states that the satisfaction relation for $a \in W'$ with $\pi_1(a) \geq 1$ is embedded into $\PA$. 

\begin{lem}\label{L2}
Let $a \in W'$ with $\pi_1(a) \geq 1$ and let $B$ be a modal formula. 
\begin{enumerate}
	\item If $a \Vdash' B$, then $\PA \vdash \lambda(\num{a}) \to f_T(B)$.  
	\item If $a \nVdash' B$, then $\PA \vdash \lambda(\num{a}) \to \neg f_T(B)$. 
\end{enumerate}
\end{lem}
\begin{proof}
Clauses 1 and 2 are proved by induction on the construction of $B$ simultaneously as in the usual proof of Solovay's theorem. 
\end{proof}

The third lemma concerns the satisfaction relation for the element $\pair{1,1}$. 

\begin{lem}\label{L3}
Let $B$ be any subformula of $\Box A$. 
\begin{enumerate}
	\item If $\pair{1, 1} \Vdash' B$, then $\PA \vdash \lambda(\lp{1, 0}) \to f_T(B)$,  
	\item If $B$ is $\Box C$ and $\pair{1, 1} \Vdash' \Box C$, then $\PA \vdash \sigma \prec \PR_T(\gn{f_T(A)}) \to f_T(\Box C)$,  
	\item If $\pair{1, 1} \nVdash' B$, then $\PA \vdash \lambda(\lp{1, 0}) \to \neg f_T(B)$.  
\end{enumerate}
\end{lem}
\begin{proof}
We prove Clauses 1, 2 and 3 by induction on the construction of $B \in \Sub(\Box A)$ simultaneously. 
We only give a proof of the case that $B$ is $\Box C$. 

1 and 2: Suppose $\pair{1, 1} \Vdash' \Box C$. 
Since $\Box C \in \Sub(\Box A)$, we have $\pair{1, 1} \Vdash' \Box C \to C$ because $\pair{1, 1} \Vdash' \bigwedge \Rf(\Box A)$. 
Then, $\pair{1, 1} \Vdash' C$. 
Hence, for all $a \in W'$ with $\pi_0(a) = 1$ and $\pi_1(a) \geq 1$, we have $a \Vdash' C$. 
By Lemma \ref{L2}.1, $\PA \vdash \bigvee_{\substack{\pi_0(a) = 1 \\ \pi_1(a) \geq 1}} \lambda(\num{a}) \to f_T(C)$. 
Also, by the induction hypothesis, $\PA \vdash \lambda(\lp{1, 0}) \to f_T(C)$. 
Thus, 
\[
	\PA \vdash \bigvee_{\pi_0(a) = 1} \lambda(\num{a}) \to f_T(C). 
\]
By Lemma \ref{L1}.4, 
\[
	\PA \vdash \sigma \prec \PR_T(\gn{f_T(A)}) \to f_T(C).
\]
Then,
\[
	\PA \vdash \PR_T(\gn{\sigma \prec \PR_T(\gn{f_T(A)})}) \to f_T(\Box C).
\] 
Since $\sigma \prec \PR_T(\gn{f_T(A)})$ is a $\Sigma_1$ sentence, we have
\[
	\PA \vdash \sigma \prec \PR_T(\gn{f_T(A)}) \to f_T(\Box C). 
\]
This completes the proof of Clause 2. 
$\PA \vdash \lambda(\lp{1,0}) \to \sigma \prec \PR_T(\gn{f_T(A)})$ by Lemma \ref{L1}.4, and hence Clause 1 also holds. 

3: Suppose $\pair{1, 1} \nVdash' \Box C$. 
Then, $a \nVdash' C$ for some $a \in W'$ with $\pair{1, 1} \sqsubset' a$. 
By Lemma \ref{L2}.2, we have 
\[
	\PA \vdash \lambda(\num{a}) \to \neg f_T(C).
\]
Then, 
\[
	\PA \vdash \neg \PR_T(\gn{\neg \lambda(\num{a})}) \to \neg f_T(\Box C).
\] 
Since $\pair{1, 0} \sqsubset' a$, by Lemma \ref{L1}.5, 
\[
	\PA \vdash \lambda(\lp{1, 0}) \to \neg \PR_T(\gn{\neg \lambda(\num{a})}). 
\]
Therefore, $\PA \vdash \lambda(\lp{1, 0}) \to \neg f_T(\Box C)$. 
\end{proof}

We are ready to show the required two statements: (a) $\PA \vdash \sigma \to f_T(\Box A)$ and (b) $\PA + \Con_T^{\cx(A)+1} \vdash f_T(\Box A) \to \sigma$. 

(a): Since $\pair{1, 1} \Vdash' \Box A$, by Lemma \ref{L3}.2, 
\[
	\PA \vdash \sigma \prec \PR_T(\gn{f_T(A)}) \to f_T(\Box A). 
\]
Since
\[
	\PA \vdash \sigma \land \neg f_T(\Box A) \to \sigma \prec \PR_T(\gn{f_T(A)}), 
\]
we have
\[
	\PA \vdash \sigma \land \neg f_T(\Box A) \to f_T(\Box A), 
\]
and hence we obtain $\PA \vdash \sigma \to f_T(\Box A)$. 

(b): Since $\pair{0, 1} \Vdash' \Box^{\cx(A)+1} \bot$, for all $a \in W'$ with $\pi_0(a) = 0$ and $\pi_1(a) \geq 1$, we have $a \Vdash' \Box^{\cx(A)+1} \bot$. 
By Lemma \ref{L2}.1, 
\[
	\PA \vdash \bigvee_{\substack{\pi_0(a)=0 \\ \pi_1(a) \geq 1}} \lambda(\num{a}) \to \neg \Con_T^{\cx(A)+1},
\]
and thus 
\begin{align}\label{fml1}
	\PA + \Con_T^{\cx(A)+1} \vdash \bigwedge_{\substack{\pi_0(a)=0 \\ \pi_1(a) \geq 1}} \neg \lambda(\num{a}). 
\end{align}
Since 
\[
	\PA \vdash f_T(\Box A) \land \neg \sigma \to \PR_T(\gn{f_T(A)}) \pc \sigma, 
\]
by Lemma \ref{L1}.3, we have
\[
	\PA \vdash f_T(\Box A) \land \neg \sigma \to \bigvee_{\pi_0(a) = 0} \lambda(\num{a}). 
\]
By combining this with (\ref{fml1}), we obtain
\begin{align}\label{fml2}
	\PA + \Con_T^{\cx(A)+1} \vdash f_T(\Box A) \land \neg \sigma \to \lambda(\lp{0, 0}). 
\end{align}

Since $\pair{0, 1} \nVdash' A$, by Lemma \ref{L2}.2, $\PA \vdash \lambda(\lp{0, 1}) \to \neg f_T(A)$. 
Then, 
\[
	\PA \vdash \neg \PR_T(\gn{\neg \lambda(\lp{0, 1})}) \to \neg f_T(\Box A). 
\]
Since $\pair{0, 0} \sqsubset' \pair{0,1}$, by Lemma \ref{L1}.5, 
\[
	\PA \vdash \lambda(\lp{0, 0}) \to \neg \PR_T(\gn{\neg \lambda(\lp{0, 1})}), 
\]
and hence we have $\PA \vdash \lambda(\lp{0, 0}) \to \neg f_T(\Box A)$. 
From this with (\ref{fml2}), we obtain $\PA + \Con_T^{\cx(A)+1} \vdash f_T(\Box A) \land \neg \sigma \to \neg f_T(\Box A)$. 
We conclude $\PA + \Con_T^{\cx(A)+1} \vdash f_T(\Box A) \to \sigma$. 
\end{proof}

If the height of $T$ is larger than $\cx(A)$, then the converse of Theorem \ref{MT} also holds. 

\begin{prop}\label{CMT}
Let $A$ be any modal formula. 
Suppose that the height of $T$ is larger than $\cx(A)$ and, for any $\Sigma_1$ sentence $\sigma$, there exists an arithmetical interpretation $f$ such that $\PA \vdash \sigma \to f_T(\Box A)$ and $\PA + \Con_T^{\cx(A) + 1} \vdash f_T(\Box A) \to \sigma$. 
Then, $A$ is nontrifling. 
\end{prop}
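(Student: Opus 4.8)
The plan is to prove the contrapositive: assume the height of $T$ is larger than $\cx(A)$ but $A$ is \emph{not} nontrifling, and produce a $\Sigma_1$ sentence $\sigma$ for which no arithmetical interpretation $f$ satisfies both \textbf{(a)} and \textbf{(b)} of Theorem \ref{MT}. By the equivalence of clauses (1) and (2) of Proposition \ref{NonT}, the failure of nontriflingness means that $\GL_\omega \vdash \Box A$ or $\GL_\omega \vdash \neg\Box A$, and I would treat these two cases separately. In both I use two auxiliary observations: first, $f_T(\Box^n\bot)$ is $\PA$-provably equivalent to $\neg\Con_T^n$ for every $n \in \omega$ (a routine induction on $n$ from the defining clauses of $\PR_T$ and $\Con_T^n$); second, since $\PA$ is $\Sigma_1$-sound and $\neg\Con_T^{\cx(A)+1}$ is equivalent to the $\Sigma_1$ sentence $\PR_T(\gn{\neg\Con_T^{\cx(A)}})$, the theory $\PA + \Con_T^{\cx(A)+1}$ is consistent --- otherwise $T \vdash \neg\Con_T^{\cx(A)}$, which (as $T$ is consistent) entails $\cx(A) \geq 1$ and hence that the height of $T$ is at most $\cx(A)$, against the hypothesis.

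In the case $\GL_\omega \vdash \Box A$, Fact \ref{LemBox} gives $\GL \vdash A$, hence $\GL \vdash \Box A$ by necessitation, so the arithmetical soundness of $\GL$ (Fact \ref{AS}.1) yields $\PA \vdash f_T(\Box A)$ for \emph{every} arithmetical interpretation $f$. Thus \textbf{(a)} is automatic and \textbf{(b)} reduces to $\PA + \Con_T^{\cx(A)+1} \vdash \sigma$. Taking $\sigma$ to be the false $\Sigma_1$ sentence $0 = 1$, which is unprovable in the consistent theory $\PA + \Con_T^{\cx(A)+1}$, we see that \textbf{(b)} fails for every $f$, and therefore so does the conjunction of \textbf{(a)} and \textbf{(b)}.

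In the case $\GL_\omega \vdash \neg\Box A$, Lemma \ref{LemNegBox} gives $\GL \vdash \Diamond^{\cx(A)+1}\top \to \neg\Box A$, equivalently $\GL \vdash \Box A \to \Box^{\cx(A)+1}\bot$, so by Fact \ref{AS}.1 and the first auxiliary observation, $\PA \vdash f_T(\Box A) \to \neg\Con_T^{\cx(A)+1}$ for every $f$. In particular $\PA + \Con_T^{\cx(A)+1} \vdash \neg f_T(\Box A)$, so \textbf{(b)} is automatic for every $f$, while \textbf{(a)} combined with this implication forces $\PA + \Con_T^{\cx(A)+1} \vdash \neg\sigma$. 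Taking $\sigma$ to be $0 = 0$, which has $\PA + \Con_T^{\cx(A)+1} \nvdash \neg\sigma$ by consistency, we again get that the conjunction of \textbf{(a)} and \textbf{(b)} fails for every $f$. This finishes the contrapositive.

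The argument is mostly a direct invocation of facts established earlier in the section; the only step requiring a little attention --- and the place where the height hypothesis is used --- is the equivalence between ``the height of $T$ is larger than $\cx(A)$'' and the consistency of $\PA + \Con_T^{\cx(A)+1}$, so I expect that bookkeeping to be the (minor) main obstacle.
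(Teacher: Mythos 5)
Your proof is correct, and modulo being phrased as a contrapositive rather than directly, it is essentially the paper's argument: the same two test sentences $0=1$ and $0=0$, the same reduction via Proposition \ref{NonT}, Fact \ref{LemBox}, Lemma \ref{LemNegBox} and Fact \ref{AS}.1, and the same use of the height hypothesis to secure the consistency (in the paper, soundness) of $\PA + \Con_T^{\cx(A)+1}$. No gaps.
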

\begin{proof}
Since $0 = 1$ is a $\Sigma_1$ sentence, there exists an arithmetical interpretation $f$ such that $\PA + \Con_T^{\cx(A) + 1} \vdash f_T(\Box A) \to 0 = 1$. 
Since $\N \models \Con_T^{\cx(A) + 1}$, we have $\N \models \neg f_T(\Box A)$. 
Thus, $T \nvdash f_T(A)$. 
By Fact \ref{AS}.1, $\GL \nvdash A$. 

Also, since $0 = 0$ is a $\Sigma_1$ sentence, there exists an arithmetical interpretation $g$ such that $\PA \vdash 0 = 0 \to g_T(\Box A)$. 
Then, $\PA \vdash g_T(\Box A)$. 
Since $\N \models \Con_T^{\cx(A)+1}$, $\PA \nvdash \neg \Con_T^{\cx(A)+1}$, and thus $\PA \nvdash \Con_T^{\cx(A)+1} \to \neg g_T(\Box A)$. 
By Fact \ref{AS}.1, $\GL \nvdash \Diamond^{\cx(A)+1} \top \to \neg \Box A$. 
By Lemma \ref{LemNegBox}, $\GL_\omega \nvdash \neg \Box A$. 
Then, by Proposition \ref{NonT}, $A$ is nontrifling. 
\end{proof}

For any classical propositional formula $A$, we have $\cx(A) = 0$. 
So Proposition \ref{ContNonT} shows that Theorem \ref{MT} is actually an extension of Theorem \ref{Thm1}. 

For a variable $v$ of first-order logic, an \textit{$v$-arithmetical interpretation} $f$ is a mapping where for each propositional variable $p$, $f(p)$ is an $\LA$-formula $\varphi(v)$ with only the free variable $v$. 
Each $v$-arithmetical interpretation $f$ is uniquely extended to the mapping $f_T$ from the set of all modal formulas to a set of $\LA$-formulas with at most the free variable $v$ as the usual arithmetical interpretations with the clause $f_T(\Box A)(v) \equiv \PR_T(\gn{f_T(A)(\dot{v})})$. 
By tracing the proof of Theorem \ref{MT} entirely using the function $h(x, v)$ and the formula $\lambda(x, v)$, the following parameterized version of Theorem \ref{MT} is also proved.  

\begin{thm}\label{MT2}
For any modal formula $A$, if $A$ is nontrifling, then for any $\Sigma_1$ formula $\sigma(v)$, there exists an $v$-arithmetical interpretation $f$ such that 
\begin{enumerate}
	\item $\PA \vdash \forall v\, \bigl(\sigma(v) \to f_T(\Box A)(v) \bigr)$, and 
	\item $\PA + \Con_T^{\cx(A) + 1} \vdash \forall v\, \bigl(f_T(\Box A)(v) \to \sigma(v) \bigr)$. 
\end{enumerate}
\end{thm}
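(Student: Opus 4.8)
The plan is to obtain Theorem~\ref{MT2} by re-running the construction in the proof of Theorem~\ref{MT} with a free parameter $v$ carried through every piece of the argument. Since $A$ is nontrifling, Proposition~\ref{NonT} again gives $\GL \nvdash A$ and $\GL \nvdash \bigwedge \Rf(\Box A) \to \neg \Box A$, so by Theorem~\ref{GLCompl} we have the same two finite $\GL$-models $M_0$ and $M_1$ and we merge them into the same model $M = \pair{W', \sqsubset', 0, \Vdash'}$ exactly as before; the Kripke side of the proof does not see the parameter at all. Write $\sigma(v)$ as $\exists x\, \delta(x, v)$ with $\delta \in \Delta_0$.

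First I would redo the Solovay-style construction with a binary primitive recursive function $h(x, v)$ in place of $h(x)$: for each fixed value of $v$, $h(\cdot, v)$ is the usual Solovay function driven by $\Proof_T$ and by the search for a witness of $\delta(\cdot, v)$, with the two branching cases again corresponding to $\PR_T(\gn{f_T(A)(\dot v)}) \pc \sigma(v)$ and $\sigma(v) \prec \PR_T(\gn{f_T(A)(\dot v)})$. Correspondingly I set $\lambda(x, v) \equiv \exists y\, \forall z\,{\geq}\,y\,(h(z, v) = x)$ and define the $v$-arithmetical interpretation $f$ by $f(p)(v) \equiv \bigvee_{a \Vdash' p} \lambda(\num a, v)$; the Fixed Point Lemma / recursion theorem applies uniformly in $v$ since $\lambda(x, v)$ and $f_T(A)(v)$ are still effectively computable from the (now binary) $h$. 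Then Lemmas~\ref{L1}, \ref{L2}, \ref{L3} all go through verbatim after replacing $\lambda(\num a)$ by $\lambda(\num a, v)$, $\sigma$ by $\sigma(v)$, $f_T(\Box A)$ by $f_T(\Box A)(v)$, and prefixing each displayed implication by $\forall v$: none of those proofs uses anything about $\sigma$ beyond its being $\Sigma_1$, and each step is a $\PA$-provable implication that holds for every value of the free variable, so universally quantifying is harmless. Finally the computations labelled (a) and (b) in the proof of Theorem~\ref{MT} carry over inside the scope of $\forall v$, yielding $\PA \vdash \forall v\, (\sigma(v) \to f_T(\Box A)(v))$ and $\PA + \Con_T^{\cx(A)+1} \vdash \forall v\, (f_T(\Box A)(v) \to \sigma(v))$, which is the claim.

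The only point that needs genuine care — and which I expect to be the main obstacle — is verifying that the fixed-point construction is legitimately \emph{uniform} in $v$: one must check that a single primitive recursive $h(x, v)$ can be defined so that, simultaneously for all $v$, it refers to the formula $f_T(A)(\dot v)$ built from that very $h$, and that the arithmetized metatheory ($\PA$) proves the recursion equations and the basic facts ``$\forall v\, (h(0, v) = 0)$'', ``$h$ eventually stabilizes for each $v$'', ``$\PA \vdash \lambda(\num a, v) \to \neg \lambda(\num b, v)$ for $a \neq b$'', etc. This is routine in the sense that it is the standard way the parameterized FGH theorem and parameterized Solovay arguments are handled (the $\gn{\varphi(\dot v)}$ machinery described in the introduction is exactly what makes $\PR_T(\gn{f_T(A)(\dot v)})$ meaningful), but it is the part where a careless treatment would break down. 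Everything else is a mechanical annotation of the proof of Theorem~\ref{MT} with a spectator variable, so I would state it as such rather than reproduce the lemmas.
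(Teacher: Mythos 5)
Your proposal matches the paper's treatment exactly: the paper proves Theorem \ref{MT2} by the one-line remark that one traces the proof of Theorem \ref{MT} using a parameterized Solovay function $h(x,v)$ and formula $\lambda(x,v)$, which is precisely the construction you describe. Your additional attention to the uniformity of the fixed-point construction in $v$ is a sensible elaboration of the same argument, not a different route.
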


From Theorem \ref{MT2}, we obtain an extension of Theorem \ref{Thm1'} to the framework of modal logic. 

\begin{thm}\label{MT3}
Let $A$ be any modal formula $A$. 
If $A$ is nontrifling and the height of $T$ is larger than $\cx(A)$, then for any c.e.~set $X$, there exists an $v$-arithmetical interpretation $f$ such that $f_T(A)(v)$ weakly represents $X$ in $T$. 
\end{thm}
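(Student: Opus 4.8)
The plan is to derive Theorem~\ref{MT3} from the parameterized modal FGH theorem (Theorem~\ref{MT2}) in exactly the same way that Theorem~\ref{Thm1'} is obtained from the parameterized classical FGH theorem, using that a large enough consistency statement is true in $\N$ to pass from the two $\PA$-provable implications to a genuine weak representation. So suppose $A$ is nontrifling and the height of $T$ is larger than $\cx(A)$; let $X$ be any c.e.\ set. First I would fix a $\Sigma_1$ formula $\sigma(v)$ that \emph{defines} $X$ over $\N$ in the $\Sigma_1$ sense, i.e.\ for every $n\in\omega$, $n\in X \iff \N\models\sigma(\num n)$; such a $\sigma(v)$ exists because $X$ is c.e. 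Then by Theorem~\ref{MT2} there is an $v$-arithmetical interpretation $f$ with
\[
	\PA \vdash \forall v\,\bigl(\sigma(v)\to f_T(\Box A)(v)\bigr)
	\quad\text{and}\quad
	\PA + \Con_T^{\cx(A)+1} \vdash \forall v\,\bigl(f_T(\Box A)(v)\to \sigma(v)\bigr).
\]
Since the height of $T$ exceeds $\cx(A)$, $T\nvdash\neg\Con_T^{\cx(A)+1}$, and more to the point $\N\models\Con_T^{\cx(A)+1}$, so both implications above hold in $\N$; combining them, $\N\models \forall v\,\bigl(\sigma(v)\leftrightarrow f_T(\Box A)(v)\bigr)$, i.e.\ $\N\models \forall v\,\bigl(\sigma(v)\leftrightarrow\PR_T(\gn{f_T(A)(\dot v)})\bigr)$.

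Next I would show $f_T(A)(v)$ weakly represents $X$ in $T$. Fix $n\in\omega$. If $n\in X$, then $\N\models\sigma(\num n)$, hence $\N\models\PR_T(\gn{f_T(A)(\num n)})$, and since $\PR_T$ is a $\Sigma_1$ provability predicate whose truth in $\N$ means exactly that there is a genuine $T$-proof, we get $T\vdash f_T(A)(\num n)$. Conversely, if $T\vdash f_T(A)(\num n)$, then $\N\models\PR_T(\gn{f_T(A)(\num n)})$, so by the $\N$-equivalence above $\N\models\sigma(\num n)$, i.e.\ $n\in X$. Thus for every $n$, $n\in X \iff T\vdash f_T(A)(\num n)$, which is precisely the statement that $f_T(A)(v)$ weakly represents $X$ in $T$. (Here I use that $f_T(A)(v)$ is an $\LA$-formula with at most the free variable $v$, which is part of the definition of $v$-arithmetical interpretation, and that $\gn{f_T(A)(\dot v)}$ evaluated at $v=n$ is the Gödel number of $f_T(A)(\num n)$.)

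The only genuinely delicate point — and the step I expect to be the main obstacle — is confirming that $\N\models\PR_T(\gn{f_T(A)(\num n)})$ implies $T\vdash f_T(A)(\num n)$ for the \emph{specific} sentence obtained by instantiating a term of the form $\gn{f_T(A)(\dot v)}$ at a numeral. This is the familiar arithmetized-syntax bookkeeping: one must check that the primitive recursive term $\gn{f_T(A)(\dot v)}$ indeed computes, from input $n$, the Gödel number of the sentence $f_T(A)(\num n)$, so that the $\Sigma_1$ truth of $\PR_T$ at that value corresponds to a real $T$-derivation. All of this is standard and is exactly the same instantiation mechanism already invoked in the introduction's discussion of the parameterized FGH theorem (``$\gn{\varphi(\dot v)}$ is a primitive recursive term corresponding to a primitive recursive function calculating the Gödel number of $\varphi(\num n)$ from $n$''); the soundness direction for $\PR_T$ is the first bullet in the list of clauses for $\PR_T(x)$ read contrapositively together with $\Sigma_1$-completeness of $\N$. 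So the argument is essentially a transcription of the proof of Theorem~\ref{Thm1'}, with ``$\Con_T$ is sound'' replaced by ``$\Con_T^{\cx(A)+1}$ is true in $\N$, which holds because the height of $T$ exceeds $\cx(A)$,'' and the observation that nontriflingness of $A$ is precisely what licenses the appeal to Theorem~\ref{MT2}.
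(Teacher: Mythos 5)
Your proposal is correct and follows essentially the same route as the paper: apply Theorem~\ref{MT2} to a $\Sigma_1$ formula defining $X$, use that the height hypothesis gives $\N \models \Con_T^{\cx(A)+1}$ to turn the two provable implications into a true equivalence over $\N$, and then read off weak representability from the soundness and $\Sigma_1$-completeness of $\PR_T$ over $\N$. The extra bookkeeping you flag about instantiating $\gn{f_T(A)(\dot v)}$ at numerals is indeed standard and is left implicit in the paper.
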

\begin{proof}
Let $\sigma(v)$ be a $\Sigma_1$ formula defining $X$ over $\N$. 
By Theorem \ref{MT2}, there exists an $v$-arithmetical interpretation $f$ such that 
\begin{enumerate}
	\item $\PA \vdash \forall v \, \bigl(\sigma(v) \to f_T(\Box A)(v) \bigr)$, and 
	\item $\PA + \Con_T^{\cx(A) + 1} \vdash \forall v\, \bigl(f_T(\Box A)(v) \to \sigma(v) \bigr)$. 
\end{enumerate}
Since $\N \models \Con_T^{\cx(A)+1}$, we have $\N \models \forall v\, \bigl(\sigma(v) \leftrightarrow f_T(\Box A)(v) \bigr)$. 
Then, for any $k \in \omega$, $k \in X$ if and only if $T \vdash f_T(A)(\num{k})$. 
This means that $f_T(A)(v)$ weakly represents $X$ in $T$.  
\end{proof}

\subsection{A modal version of the FGH theorem for theories with finite heights}

Notice that Theorem \ref{MT3} requires the assumption that the height of $T$ is larger than $\cx(A)$. 
In this subsection, we investigate variations of Theorems \ref{MT} and \ref{MT3}, keeping in mind theories with finite heights. 

It is already proved in Proposition \ref{NonT} that for any natural number $s$ with $s > \cx(A)$, $A$ is nontrifling if and only if $\GL + \{\neg F_s\} \nvdash \Box A$ and $\GL + \{\neg F_s\} \nvdash \neg \Box A$. 
On the other hand, even for a natural number $s$ with $s \leq \cx(A)$, we prove the following version of the FGH theorem concerning the condition ``$\GL + \{\neg F_s\} \nvdash \Box A$ and $\GL + \{\neg F_s\} \nvdash \neg \Box A$''.

\begin{thm}\label{MT4}
For any modal formula $A$, if $\GL + \{\neg F_s\} \nvdash \Box A$ and $\GL + \{\neg F_s\} \nvdash \neg \Box A$, then for any $\Sigma_1$ sentence $\sigma$, there exists an arithmetical interpretation $f$ such that 
\[
	\PA + \neg \Con_T^{s+1} + \Con_T^s \vdash \sigma \leftrightarrow f_T(\Box A). 
\]
\end{thm}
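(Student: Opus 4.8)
The plan is to adapt the proof of Theorem~\ref{MT} by replacing the two-component Solovay model with one that respects the intended arithmetical constraint $\PA + \neg\Con_T^{s+1} + \Con_T^s$. The hypothesis that $\GL + \{\neg F_s\} \nvdash \Box A$ and $\GL + \{\neg F_s\} \nvdash \neg \Box A$ gives, by soundness and completeness of $\GL + \{\neg F_s\}$ with respect to $\GL$-models satisfying $\Box^{s+1}\bot \land \Diamond^s\top$ at the root, two such models: one model $M_0$ with root $r_0 \Vdash \Box^{s+1}\bot \land \Diamond^s\top \land \neg A$ (witnessing $\GL+\{\neg F_s\}\nvdash \Box A$ — more precisely one gets $r_0 \nVdash \Box A$, which we can refine to a point seeing a counterexample to $A$), and one model $M_1$ with root $r_1 \Vdash \Box^{s+1}\bot \land \Diamond^s\top \land \Box A$ (witnessing $\GL+\{\neg F_s\}\nvdash \neg\Box A$). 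As in the proof of Proposition~\ref{NonT}, I would first pass to models where the relevant reflection formulas $\bigwedge\Rf(\Box A)$ hold at $r_1$, so that the embedding argument for subformulas of $\Box A$ goes through; but here the key new feature is that both roots already validate $\Box^{s+1}\bot \land \Diamond^s\top$, so I do not need the tall "chain" of $r_i$'s that made $r^*$ satisfy all $\Diamond^n\top$.

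Next I would merge $M_0$ and $M_1$ into a single $\GL$-model $M$ with a fresh root $0$ below everything, together with duplicated roots $\pair{0,0}, \pair{1,0}$ as in the proof of Theorem~\ref{MT}, arranged so that the whole model has depth exactly $s+1$: concretely, $0 \Vdash' \Box^{s+1}\bot \land \Diamond^s\top$, and the subtrees rooted at $\pair{0,0}$ and $\pair{1,0}$ have height $s$. Then I would run the Solovay function $h:\omega\to W'$ exactly as before — $h(0)=0$, branching into $\pair{0,0}$ (Case~1, corresponding to $\PR_T(\gn{f_T(A)}) \pc \sigma$) or $\pair{1,0}$ (Case~2, corresponding to $\sigma \prec \PR_T(\gn{f_T(A)})$), and afterwards climbing up the tree by reading $T$-proofs of $\neg\lambda(\num{a})$ — defining $\lambda(x)$ and the arithmetical interpretation $f$ by the Fixed Point Lemma just as in Theorem~\ref{MT}. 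The analogues of Lemmas~\ref{L1}, \ref{L2}, \ref{L3} hold verbatim (they are the standard Solovay lemmas plus the $\bigwedge\Rf(\Box A)$-bookkeeping at $\pair{1,1}$).

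Finally I would derive the biconditional. As in part~(a) of Theorem~\ref{MT}, $\pair{1,1}\Vdash' \Box A$ together with Lemma~\ref{L3}.2 gives $\PA \vdash \sigma \prec \PR_T(\gn{f_T(A)}) \to f_T(\Box A)$, hence $\PA \vdash \sigma \to f_T(\Box A)$ (using $\PA \vdash \sigma \land \neg f_T(\Box A) \to \sigma\prec \PR_T(\gn{f_T(A)})$); so that half needs no height hypothesis. For the converse direction, the point where Theorem~\ref{MT} used $\Con_T^{\cx(A)+1}$ to kill the $\pi_0=0$ branch now becomes: $\pair{0,1}$ sees a $\Box^{s+1}\bot$-world but also, by construction, $0 \Vdash' \Con_T^s$ fails to block it — more carefully, I arrange that $\PA + \Con_T^s \vdash \bigwedge_{\pi_0(a)=0,\pi_1(a)\geq 1}\neg\lambda(\num{a})$ is \emph{not} what I want; instead I use that under $\neg\Con_T^{s+1} + \Con_T^s$ the function $h$ is forced to actually leave the root (since $\neg\Con_T^{s+1}$ means $\N$-falsity is provable deep enough, which via Lemma~\ref{L1} forces some $\lambda(\num{a})$ with $a\neq 0$), and $\Con_T^s$ prevents it from descending too far into the $M_0$ subtree past the $\neg A$-witness; combining with Lemma~\ref{L3}.3-style reasoning on the $\pair{0,0}$ branch yields $\PA + \neg\Con_T^{s+1} + \Con_T^s \vdash f_T(\Box A) \land \neg\sigma \to \neg f_T(\Box A)$, hence $f_T(\Box A) \to \sigma$. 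The main obstacle is getting the depths and the behaviour of $h$ to line up so that $\neg\Con_T^{s+1}+\Con_T^s$ is exactly strong enough both to guarantee $h$ moves off $0$ and to pin down which duplicated root it lands on relative to the $\Box A$/$\neg A$ witnesses; this is a delicate but routine adjustment of the bookkeeping in the proof of Theorem~\ref{MT}.
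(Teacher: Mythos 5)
Your overall skeleton --- take the two countermodels supplied by $\GL + \{\neg F_s\} \nvdash \Box A$ and $\GL + \{\neg F_s\} \nvdash \neg \Box A$, merge them into one $\GL$-model with a fresh root $0$ and duplicated roots $\pair{0,0},\pair{1,0}$, and run the Solovay function branching on $\PR_T(\gn{f_T(A)}) \pc \sigma$ versus $\sigma \prec \PR_T(\gn{f_T(A)})$ --- is exactly the paper's. But the step you defer as ``delicate but routine bookkeeping'' is precisely the new content of this theorem, and the mechanism you sketch for it is wrong. The actual mechanism is that the two extra axioms surgically exclude every world except the two original roots: every $a$ with $\pi_1(a) > 1$ forces $\Box^s \bot$, so $\Con_T^s$ yields $\neg\lambda(\num{a})$ for all such $a$; and since $\pair{i,1} \Vdash' \Diamond^s \top$, one gets $\PA \vdash \lambda(\lp{i,0}) \to \Con_T^{s+1}$, so $\neg\Con_T^{s+1}$ kills the duplicated roots. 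Hence over $\PA + \neg\Con_T^{s+1} + \Con_T^s$ the only possible limits in the $\pi_0 = 1$ (resp.\ $\pi_0 = 0$) part are $\pair{1,1}$ (resp.\ $\pair{0,1}$), and Lemma \ref{L2} applied to the single formula $\Box A$ at those worlds ($\lambda(\lp{1,1}) \to f_T(\Box A)$ because $\pair{1,1} \Vdash' \Box A$; $\lambda(\lp{0,1}) \to \neg f_T(\Box A)$ because $\pair{0,1} \nVdash' \Box A$) closes both directions. Your version says the axioms should ``pin down which duplicated root'' the limit lands on; it is the opposite --- the duplicated roots are exactly what gets eliminated.

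This matters because the machinery you import from Theorem \ref{MT} to handle the duplicated roots is neither available nor needed here. You cannot in general ``pass to a model where $\bigwedge \Rf(\Box A)$ holds at $r_1$'': the hypothesis only yields $r_1 \Vdash \Box^{s+1}\bot \land \Diamond^s\top \land \Box A$, and the standard way of adding the reflection formulas (via the principle $\Diamond^{\cx(\Box A)}\top \to \bigwedge\Rf(\Box A) \lor \Diamond\bigwedge\Rf(\Box A)$ used in Lemma \ref{LemNegBox}) requires depth greater than $\cx(\Box A)$, i.e.\ $s > \cx(A)$ --- exactly the regime this theorem is designed to avoid. Consequently Lemma \ref{L3}.2 is not at your disposal, and your claim that $\PA \vdash \sigma \to f_T(\Box A)$ holds with no consistency hypothesis is unjustified; the theorem asserts, and the paper proves, that implication only over $\PA + \neg\Con_T^{s+1} + \Con_T^s$, where it follows from the exclusion argument above. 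Two smaller points: ``refining'' $r_0 \nVdash \Box A$ to a world forcing $\neg A$ would destroy $\Diamond^s\top$ and is unnecessary, since $\neg\Box A$ at $\pair{0,1}$ is what Lemma \ref{L2}.2 consumes; and your depth bookkeeping is inconsistent --- if $\pair{i,1}$ still forces $\Diamond^s\top$ then $0$ forces $\Diamond^{s+2}\top$, not $\Box^{s+1}\bot$, but nothing in the proof requires the new root to force $\Box^{s+1}\bot$.
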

\begin{proof}
Suppose $\GL + \{\neg F_s\} \nvdash \Box A$ and $\GL + \{\neg F_s\} \nvdash \neg \Box A$, that is, $\GL \nvdash \Box^{s+1} \bot \land \Diamond^s \top \to \Box A$ and $\GL \nvdash \Box^{s+1} \bot \land \Diamond^s \top \to \neg \Box A$. 
By the Kripke completeness of $\GL$, there exist $\GL$-models $M_0 = \pair{W_0, \sqsubset_0, r_0, \Vdash_0}$ and $M_1 = \pair{W_1, \sqsubset_1, r_1, \Vdash_1}$ such that $r_0 \Vdash_0 \Box^{s+1} \bot \land \Diamond^s \top \land \neg \Box A$ and $r_1 \Vdash_1 \Box^{s+1} \bot \land \Diamond^s \top \land \Box A$. 
Let $n_0$ and $n_1$ be the cardinalities of $W_0$ and $W_1$ respectively, and we may assume that 
\begin{itemize}
	\item $W_0 = \{\pair{0, 1}, \pair{0, 2}, \ldots, \pair{0, n_0}\}$, $r_0 = \pair{0, 1}$, and
	\item $W_1 = \{\pair{1, 1}, \pair{1, 2}, \ldots, \pair{1, n_1}\}$, $r_1 = \pair{1, 1}$. 
\end{itemize}
Let $M' = \langle W', \sqsubset', 0, \Vdash' \rangle$ be the $\GL$-model obtained from $M_0$ and $M_1$ as in the proof of Theorem \ref{MT}. 
Then, $\pair{0,1} \Vdash' \Box^{s+1} \bot \land \Diamond^s \top \land \neg \Box A$ and $\pair{1,1} \Vdash' \Box^{s+1} \bot \land \Diamond^s \top \land \Box A$. 

We can define a function $h$, a formula $\lambda(x)$, and an arithmetical interpretation $f$ from the $\GL$-model $M'$ in the same way as in the proof of Theorem \ref{MT}. 
Then, the same results as Lemmas \ref{L1}, \ref{L2} and \ref{L3} can also be proved. 

For each $a \in W'$ with $\pi_1(a) > 1$, since $a \Vdash' \Box^s \bot$, we have that $\PA$ proves $\lambda(\num{a}) \to \neg \Con_T^s$, and thus 
\[
	\PA + \Con_T^s \vdash \bigwedge_{\pi_1(a) > 1} \neg \lambda(\num{a}). 
\]
Also, since $\pair{i, 1} \Vdash' \Diamond^s \top$, we have $\PA \vdash \lambda(\lp{i, 1}) \to \Con_T^s$. 
Then, $\PA$ proves $\neg \PR_T(\gn{\neg \lambda(\lp{i, 1})}) \to \Con_T^{s+1}$. 
Also, $\PA \vdash \lambda(\lp{i, 0}) \to \neg \PR_T(\gn{\neg \lambda(\lp{i, 1})})$ because $\pair{i, 0} \sqsubset' \pair{i, 1}$, and hence $\PA \vdash \lambda(\lp{i, 0}) \to \Con_T^{s+1}$. 
Therefore, we obtain 
\begin{align}\label{fml3}
	\PA + \neg \Con_T^{s+1} + \Con_T^s \vdash \bigwedge_{\pi_1(a) \neq 1} \neg \lambda(\num{a}). 
\end{align}

We prove $\PA + \neg \Con_T^{s+1} + \Con_T^s \vdash \sigma \leftrightarrow f_T(\Box A)$. 

$(\rightarrow)$: 
Since $\PA \vdash \sigma \prec \PR_T(\gn{f_T(A)}) \to \bigvee_{\pi_0(a) = 1} \lambda(\num{a})$, from (\ref{fml3}), we have 
	\[
		\PA + \neg \Con_T^{s+1} + \Con_T^s \vdash \sigma \prec \PR_T(\gn{f_T(A)}) \to \lambda(\lp{1, 1}).
	\] 
	Since $\pair{1, 1} \Vdash' \Box A$, we have $\PA \vdash \lambda(\lp{1, 1}) \to f_T(\Box A)$. 
Hence,
	\[
		\PA + \neg \Con_T^{s+1} + \Con_T^s \vdash \sigma \prec \PR_T(\gn{f_T(A)}) \to f_T(\Box A).
	\] 
	Here $\PA \vdash \sigma \land \neg f_T(\Box A) \to \sigma \prec \PR_T(\gn{f_T(A)})$, and thus
	\[
		\PA + \neg \Con_T^{s+1} + \Con_T^s \vdash \sigma \land \neg f_T(\Box A) \to f_T(\Box A).
	\] 
We obtain $\PA + \neg \Con_T^{s+1} + \Con_T^s \vdash \sigma \to f_T(\Box A)$. 

$(\leftarrow)$: 
Since $\PA \vdash \PR_T(\gn{f_T(A)}) \pc \sigma \to \bigvee_{\pi_0(a) = 0} \lambda(\num{a})$, from (\ref{fml3}), 
	\[
		\PA + \neg \Con_T^{s+1} + \Con_T^s \vdash \PR_T(\gn{f_T(A)}) \pc \sigma \to \lambda(\lp{0, 1}).
	\] 
Since $\pair{0, 1} \nVdash' \Box A$, $\PA \vdash \lambda(\lp{0, 1}) \to \neg f_T(\Box A)$. 
Therefore, 
	\[
		\PA + \neg \Con_T^{s+1} + \Con_T^s \vdash \PR_T(\gn{f_T(A)}) \pc \sigma \to  \neg f_T(\Box A), 
	\] 
and hence 
	\[
		\PA + \neg \Con_T^{s+1} + \Con_T^s \vdash f_T(\Box A) \land \neg \sigma \to \neg f_T(\Box A).
	\] 
We conclude $\PA + \neg \Con_T^{s+1} + \Con_T^s \vdash f_T(\Box A) \to \sigma$. 
\end{proof}

If the height of $T$ is larger than or equal to $s$, then the converse of Theorem \ref{MT4} also holds. 

\begin{prop}
Suppose that the height of $T$ is larger than or equal to $s$. 
For any modal formula $A$, if for any $\Sigma_1$ sentence $\sigma$, there exists an arithmetical interpretation $f$ such that $\PA + \neg \Con_T^{s+1} + \Con_T^s \vdash \sigma \leftrightarrow f_T(\Box A)$, then $\GL + \{\neg F_s\} \nvdash \Box A$ and $\GL + \{\neg F_s\} \nvdash \neg \Box A$. 
\end{prop}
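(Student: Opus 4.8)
The plan is to prove the contrapositive, mirroring the argument of Proposition~\ref{CMT}. Suppose the height of $T$ is $\geq s$, so that $\N \models \Con_T^s$; note also that since $\PA \nvdash \neg\Con_T^{s+1}$ would fail exactly when the height of $T$ is $\leq s$, we actually want to be a little careful here. If the height of $T$ is exactly $s$, then $T \vdash \neg\Con_T^{s+1}$, so $\PA + \neg\Con_T^{s+1} + \Con_T^s$ is a consistent c.e.\ theory extending $T$; if the height of $T$ is $> s$ (including $\infty$), then $\N \models \neg\Con_T^{s+1}$ is false, but $\PA + \neg\Con_T^{s+1} + \Con_T^s$ is still consistent because $\PA$ does not prove $\Con_T^{s+1}$ (as the height exceeds $s$) — so in all cases the base theory in the hypothesis is a consistent c.e.\ theory with a model. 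Let me call this theory $U := \PA + \neg\Con_T^{s+1} + \Con_T^s$ and fix a model $\mathcal{M} \models U$.

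First I would handle $\GL + \{\neg F_s\} \nvdash \neg\Box A$. Apply the hypothesis to the $\Sigma_1$ sentence $0 = 1$: there is an arithmetical interpretation $f$ with $U \vdash (0=1) \leftrightarrow f_T(\Box A)$, i.e.\ $U \vdash \neg f_T(\Box A)$, which is the same as $U \vdash \neg\PR_T(\gn{f_T(A)})$. So $\mathcal{M} \models \neg\PR_T(\gn{f_T(A)})$, and in particular $T \nvdash f_T(A)$, hence by Fact~\ref{AS}.1 (arithmetical soundness of $\GL$), $\GL \nvdash A$. Now I need to upgrade this to $\GL + \{\neg F_s\} \nvdash \neg\Box A$, i.e.\ $\GL \nvdash \Box^{s+1}\bot \land \Diamond^s\top \to \neg\Box A$. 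The key point is that $\mathcal{M} \models U$ gives a concrete arithmetical situation witnessing the satisfiability of $\Box^{s+1}\bot \land \Diamond^s\top \land \Box A$: since $\mathcal{M} \models \Con_T^s \land \neg\Con_T^{s+1}$ this is an arithmetical interpretation making $\Box^{s+1}\bot \land \Diamond^s\top$ true (interpreting $\Box$ by $\PR_T$ relativized appropriately). The cleanest route is the analogue of Fact~\ref{AS} for the theory $U$: by a Solovay-style argument $\GL + \{\neg F_s\}$ is sound for arithmetical interpretations validated in $U$ (this is essentially built into the setup, since $\PA + \neg F_s$-type theories correspond to the logic $\GL + \{\neg F_s\}$), and the hypothesis gives $U \vdash \neg f_T(\Box A)$, so $U \nvdash f_T(\Box A)$; combined with $\mathcal{M} \models \Box^{s+1}\bot \land \Diamond^s\top$ under $f$ this blocks $\GL + \{\neg F_s\} \vdash \neg\Box A$.

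Symmetrically, for $\GL + \{\neg F_s\} \nvdash \Box A$ I would apply the hypothesis to the $\Sigma_1$ sentence $0=0$: there is $g$ with $U \vdash (0=0) \leftrightarrow g_T(\Box A)$, i.e.\ $U \vdash g_T(\Box A) = \PR_T(\gn{g_T(A)})$. Then $U \nvdash \neg g_T(\Box A)$ (since $U$ is consistent), and again since $\mathcal{M} \models U$ realizes $\Box^{s+1}\bot \land \Diamond^s\top$ under the interpretation $g$, soundness of $\GL + \{\neg F_s\}$ with respect to $U$-valid interpretations gives $\GL + \{\neg F_s\} \nvdash \neg\Box A \to \bot$ in the relevant sense, hence $\GL + \{\neg F_s\} \nvdash \Box A$.

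The main obstacle is making precise the soundness statement that plays the role of Fact~\ref{AS} for the logic $\GL + \{\neg F_s\}$ and the theory $U = \PA + \neg\Con_T^{s+1} + \Con_T^s$: one must check that whenever $\GL + \{\neg F_s\} \vdash B$, then $U \vdash f_T(B)$ for every arithmetical interpretation $f$ (equivalently, that $\PA \vdash (\neg\Con_T^{s+1} \land \Con_T^s) \to f_T(B)$ when $f_T$ interprets $\Box$ by $\PR_T$, using $\PA \vdash \Con_T^{s+1} \leftrightarrow \neg\PR_T(\gn{\neg\Con_T^s})$ and that $\Box^{s+1}\bot \land \Diamond^s\top$ under the standard interpretation is exactly $\neg\Con_T^{s+1} \land \Con_T^s$). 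This is a routine formalization but needs to be stated carefully; once it is in hand, the rest of the proof is the two-line contrapositive argument above, exactly parallel to Proposition~\ref{CMT}. I would not expect any genuinely new difficulty beyond bookkeeping with the sentences $\Con_T^n$.
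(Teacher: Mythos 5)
Your overall strategy --- establish that $U:=\PA+\neg\Con_T^{s+1}+\Con_T^s$ is consistent, instantiate the hypothesis at concrete $\Sigma_1$ sentences, and use arithmetical soundness of $\GL$ together with the observation that $f_T(\neg F_s)$ is $\PA$-equivalent to $\neg\Con_T^{s+1}\land\Con_T^s$ --- is the right one and is essentially the paper's, except that the paper instantiates at a single $\Sigma_1$ sentence chosen \emph{independent} of $U$, so that both non-derivabilities fall out at once. However, as written your argument has the two halves crossed. From $\sigma=(0{=}1)$ you get $U\vdash\neg f_T(\Box A)$, hence $U\nvdash f_T(\Box A)$ by consistency of $U$; via soundness this refutes $\GL+\{\neg F_s\}\vdash\Box A$, \emph{not} $\vdash\neg\Box A$ as you claim. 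The assertion that ``$U\nvdash f_T(\Box A)$ \ldots blocks $\GL+\{\neg F_s\}\vdash\neg\Box A$'' is a non sequitur: $U\vdash\neg f_T(\Box A)$ is perfectly compatible with $\GL+\{\neg F_s\}\vdash\neg\Box A$, and your model $\mathcal M$ of $U$ witnesses $\Box^{s+1}\bot\land\Diamond^s\top\land\neg\Box A$ under $f$, not $\Box A$ as you wrote. Dually, $\sigma=(0{=}0)$ gives $U\vdash g_T(\Box A)$, hence $U\nvdash\neg g_T(\Box A)$, which refutes $\GL+\{\neg F_s\}\vdash\neg\Box A$, not $\vdash\Box A$. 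The two facts you derive do jointly suffice, but only after swapping the labels; each half fails as stated. The detour through ``$\mathcal M\models\neg\PR_T(\gn{f_T(A)})$, so $T\nvdash f_T(A)$, so $\GL\nvdash A$'' is a symptom of the same confusion and is not needed.

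There is also a gap in the consistency argument for $U$ when the height of $T$ exceeds $s$: you only assert $\PA\nvdash\Con_T^{s+1}$, which gives consistency of $\PA+\neg\Con_T^{s+1}$ but not of $\PA+\neg\Con_T^{s+1}+\Con_T^s$. The paper closes this with L\"ob's theorem: since $T\nvdash\neg\Con_T^s$, L\"ob gives $T\nvdash\PR_T(\gn{\neg\Con_T^s})\to\neg\Con_T^s$, i.e.\ $T+\neg\Con_T^{s+1}+\Con_T^s$ is consistent, hence so is $U$. (When the height is exactly $s$ your claim that $U$ ``extends $T$'' is also not right, but it is unnecessary: the standard model satisfies $\neg\Con_T^{s+1}\land\Con_T^s$ in that case.) By contrast, the soundness statement you flag as the main obstacle is the routine part: since $\neg F_s$ contains no propositional variables, $\GL+\{\neg F_s\}\vdash B$ amounts to $\GL\vdash\neg F_s\to B$, and Fact \ref{AS}.1 then yields $U\vdash f_T(B)$ for every arithmetical interpretation $f$.
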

\begin{proof}
If the height of $T$ is larger than $s$, then $T \nvdash \neg \Con_T^s$. 
By L\"ob's theorem, $T \nvdash \neg \Con_T^{s+1} \to \neg \Con_T^s$. 
Thus, the theory $T + \neg \Con_T^{s+1} + \Con_T^s$ is consistent. 
Also, if the height of $T$ is exactly $s$, then $\N \models \neg \Con_T^{s+1} \land \Con_T^s$. 
Thus, $\PA + \neg \Con_T^{s+1} + \Con_T^s$ is consistent. 
In either case, the theory $\PA + \neg \Con_T^{s+1} + \Con_T^s$ is consistent. 

Let $\sigma$ be any $\Sigma_1$ sentence independent of $\PA + \neg \Con_T^{s+1} + \Con_T^s$. 
Then, we have an arithmetical interpretation $f$ such that
\[
	\PA + \neg \Con_T^{s+1} + \Con_T^s \vdash \sigma \leftrightarrow f_T(\Box A).
\] 
Then, $\PA + \neg \Con_T^{s+1} + \Con_T^s$ proves neither $f_T(\Box A)$ nor $\neg f_T(\Box A)$. 
By Fact \ref{AS}.1, $\GL$ proves neither $\neg F_s \to \Box A$ nor $\neg F_s \to \neg \Box A$. 
\end{proof}

In the same way as in the proof of Theorem \ref{MT4}, the parameterized version of Theorem \ref{MT4} can also be proved, and hence we obtain the following theorem. 

\begin{thm}
Suppose that the height of $T$ is $s$. 
For any modal formula $A$, if $\GL + \{\neg F_s\} \nvdash \Box A$ and $\GL + \{\neg F_s\} \nvdash \neg \Box A$, then for any c.e.~set $X$, there exists an $v$-arithmetical interpretation $f$ such that $f_T(A)(v)$ weakly represents $X$ in $T$. 
\end{thm}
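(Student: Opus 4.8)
The plan is to combine the parameterized analogue of Theorem~\ref{MT4} with the soundness observations already used in the proof of Theorem~\ref{MT3}. The statement to be proved is: if the height of $T$ is $s$ and $\GL + \{\neg F_s\} \nvdash \Box A$ and $\GL + \{\neg F_s\} \nvdash \neg \Box A$, then for any c.e.\ set $X$ there is a $v$-arithmetical interpretation $f$ such that $f_T(A)(v)$ weakly represents $X$ in $T$. As the paragraph preceding the theorem indicates, the first step is to note that the proof of Theorem~\ref{MT4} goes through verbatim with the parametric machinery $h(x,v)$, $\lambda(x,v)$ substituted for $h(x)$, $\lambda(x)$, exactly as Theorem~\ref{MT2} was obtained from Theorem~\ref{MT}. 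This yields: for any $\Sigma_1$ formula $\sigma(v)$ there is a $v$-arithmetical interpretation $f$ with
\[
	\PA + \neg \Con_T^{s+1} + \Con_T^s \vdash \forall v\, \bigl(\sigma(v) \leftrightarrow f_T(\Box A)(v)\bigr).
\]

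Next I would apply this to a $\Sigma_1$ formula $\sigma(v)$ that defines $X$ over $\N$; such a formula exists since $X$ is c.e. The key point is that the height of $T$ being exactly $s$ means $T \vdash \neg \Con_T^{s+1}$ (so $\N \models \neg \Con_T^{s+1}$) while $T \nvdash \neg \Con_T^s$, and since $T$ extends $\PA$ and is consistent, $\N \models \Con_T^s$. Hence $\N \models \neg \Con_T^{s+1} \land \Con_T^s$, so the standard model satisfies all the extra axioms of $\PA + \neg \Con_T^{s+1} + \Con_T^s$. Evaluating the displayed $\PA$-provable equivalence in $\N$ gives $\N \models \forall v\, \bigl(\sigma(v) \leftrightarrow f_T(\Box A)(v)\bigr)$, that is, $\N \models \forall v\,\bigl(\sigma(v) \leftrightarrow \PR_T(\gn{f_T(A)(\dot v)})\bigr)$.

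Finally I would unwind what this says about weak representability. For each $k \in \omega$: $k \in X$ iff $\N \models \sigma(\num{k})$ iff $\N \models \PR_T(\gn{f_T(A)(\num{k})})$ iff $T \vdash f_T(A)(\num{k})$, where the last equivalence holds because $\PR_T$ is a $\Sigma_1$ provability predicate whose truth in $\N$ is equivalent to actual $T$-provability. Therefore $f_T(A)(v)$ weakly represents $X$ in $T$, as required. I do not expect any serious obstacle here: the only nontrivial input is the parameterized form of Theorem~\ref{MT4}, which is asserted to follow by the same argument, and the rest is the same soundness-in-$\N$ bookkeeping used for Theorem~\ref{MT3}; the one thing to be careful about is the case analysis establishing $\N \models \neg\Con_T^{s+1} \land \Con_T^s$ from the hypothesis that the height is exactly $s$.
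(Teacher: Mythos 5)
Your proposal follows exactly the paper's route: invoke the parameterized version of Theorem \ref{MT4}, apply it to a $\Sigma_1$ formula defining $X$, and then evaluate the resulting $(\PA + \neg\Con_T^{s+1} + \Con_T^s)$-provable equivalence in $\N$. The overall argument is sound, but the one step you yourself flagged as delicate --- deriving $\N \models \neg\Con_T^{s+1} \land \Con_T^s$ from ``the height of $T$ is $s$'' --- is where your write-up goes wrong. The definition of height $s$ says that $s$ is the \emph{least} $k \geq 1$ with $T \vdash \neg\Con_T^k$; so it gives $T \vdash \neg\Con_T^{s}$ together with $T \nvdash \neg\Con_T^{k}$ for $1 \leq k < s$. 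Your unfolding (``$T \vdash \neg\Con_T^{s+1}$ while $T \nvdash \neg\Con_T^{s}$'') is off by one and, taken literally, contradicts the definition. Moreover, the inference ``$T \vdash \neg\Con_T^{s+1}$, so $\N \models \neg\Con_T^{s+1}$'' is not valid, since $T$ is only assumed consistent, not sound, and $\neg\Con_T^{s+1}$ is a $\Sigma_1$ sentence whose provability in $T$ does not entail its truth.

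The fact you need is nevertheless true, and the correct derivation is short: $\neg\Con_T^{s+1}$ is (up to a double negation) the sentence $\PR_T(\gn{\neg\Con_T^{s}})$, which is true in $\N$ precisely because $T \vdash \neg\Con_T^{s}$; and $\Con_T^{s}$ is $\neg\PR_T(\gn{\neg\Con_T^{s-1}})$ (for $s \geq 2$), which is true in $\N$ because $T \nvdash \neg\Con_T^{s-1}$ by the minimality of $s$ --- not merely because $T$ is consistent, as you assert. (For $s = 1$, $\Con_T^1$ is just $\Con_T$ and consistency does suffice.) With that correction the rest of your argument --- truth of the equivalence in $\N$ and the identification of $\N \models \PR_T(\gn{f_T(A)(\num{k})})$ with $T \vdash f_T(A)(\num{k})$ --- goes through and matches the paper's proof.
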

\begin{proof}
Let $\sigma(v)$ be a $\Sigma_1$ formula defining $X$ over $\N$. 
Then, from the parameterized version of Theorem \ref{MT4}, there exists an $v$-arithmetical interpretation $f$ such that 
\[
	\PA + \neg \Con_T^{s+1} + \Con_T^s \vdash \forall v\, \bigl(\sigma(v) \leftrightarrow f_T(\Box A)(v) \bigr). 
\]
Since the height of $T$ is $s$, $\N \models \neg \Con_T^{s+1} \land \Con_T^s$. 
Then, we have that $\N \models \forall v\, \bigl(\sigma(v) \leftrightarrow f_T(\Box A)(v) \bigr)$. 
It follows that $f_T(A)(v)$ weakly represents $X$ in $T$. 
\end{proof}

We close this section with two problems. 
In Section \ref{Classical}, we proved Theorem \ref{Thm1} by using Lemma \ref{Lem1}. 
On the other hand, in this section, we proved Theorem \ref{MT} that is a modal extension of Theorem \ref{Thm1} without using modal version of Lemma \ref{Lem1}. 
If a modal version of Lemma \ref{Lem1} is proved, then a proof of Theorem \ref{MT}, as well as the proof of our Theorem \ref{Thm1} in Section \ref{Classical}, would be substantially simplified. 

\begin{prob}
Can we prove a modal version of Lemma \ref{Lem1}?
\end{prob}

In Section \ref{Classical}, we proved Theorem \ref{Thm2} that is an improvement of Theorem \ref{Thm1}. 
Then, it is natural to ask the following problem. 

\begin{prob}
Can we extend Theorem \ref{Thm2} to the framework of modal propositional logic?
\end{prob}

\section{Rosser-type FGH theorems}\label{Rosser}

In this section, we investigate some variations of Rosser-type FGH theorems. 
Recall that $\Proof_T(x, y)$ is a natural formula saying that $y$ is a $T$-proof of $x$ and $\PR_T(x)$ is of the form $\exists y\, \Proof_T(x, y)$. 
Besides $\Proof_T(x, y)$, we consider formulas that witness $\PR_T(x)$. 
We say a formula $\Prf_T(x, y)$ is a \textit{proof predicate} of $T$ if $\Prf_T(x, y)$ satisfies the following conditions: 
\begin{enumerate}
	\item $\Prf_T(x, y)$ is a primitive recursive formula, 
	\item $\PA \vdash \forall x \, \bigl(\PR_T(x) \leftrightarrow \exists y \, \Prf_T(x, y) \bigr)$, 
	\item $\PA \vdash \forall x \, \forall x' \, \forall y\, \bigl(\Prf_T(x, y) \land \Prf_T(x', y) \to x = x' \bigr)$. 
\end{enumerate}

For each proof predicate $\Prf_T(x, y)$ of $T$, the $\Sigma_1$ formula $\PR_T^{\mathrm R}(x)$ defined by
\[
	 \PR_T^{\mathrm R}(x) : \equiv \exists y \, \bigl(\Prf_T(x, y) \land \forall z \, {<} \, y \, \neg \Prf_T(\dot{\neg} x, z) \bigr)
\]
is called the \textit{Rosser provability predicate} of $\Prf_T(x, y)$ or a Rosser provability predicate of $T$. 
Here $\dot{\neg} x$ is a primitive recursive term corresponding to a primitive recursive function calculating the G\"odel number of $\neg \varphi$ from the G\"odel number of an $\LA$-formula $\varphi$ such that $\PA$ proves $\dot{\neg} \gn{\psi} = \gn{\neg \psi}$ for each $\LA$-formula $\psi$. 
In view of witness comparison, we also introduce an auxiliary $\Sigma_1$ formula $\PR_T^{\R}(\dot{\neg}x)$ as follows: 
\[
	\PR_T^{\R}(\dot{\neg} x) : \equiv \exists y \, \bigl(\Prf_T(\dot{\neg} x, y) \land \forall z \, {\leq} \, y \, \neg \Prf_T(x, z) \bigr). 
\]
Then, for any $\LA$-formula $\varphi$, $\PA$ proves $\neg \bigl(\PR_T^{\mathrm R}(\gn{\varphi}) \land \PR_T^{\R}(\gn{\neg \varphi}) \bigr)$ and $\PR_T(\gn{\varphi}) \lor \PR_T(\gn{\neg \varphi}) \to \PR_T^{\mathrm R}(\gn{\varphi}) \lor \PR_T^{\R}(\gn{\neg \varphi})$.  

In a study of Rosser-type Henkin sentences, the following result was proved. 

\begin{fact}[Kurahashi {\cite[Theorem 3.5]{Kur}}]
For any $\Sigma_1$ sentence $\sigma$, the following are equivalent: 
\begin{enumerate}
	\item There exists a $\Sigma_1$ sentence $\sigma'$ such that $\PA \vdash \neg (\sigma \land \sigma')$ and $\PA \vdash \PR_T(\gn{\sigma}) \lor \PR_T(\gn{\sigma'}) \to \sigma \lor \sigma'$. 
	\item There exists a Rosser provability predicate $\PR_T^{\mathrm R}(x)$ of $T$ such that $\PA \vdash \sigma \leftrightarrow \PR_T^{\mathrm R}(\gn{\sigma})$. 
\end{enumerate}
\end{fact}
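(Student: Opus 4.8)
I would prove the two implications separately. For $(2)\Rightarrow(1)$, suppose $\PR_T^{\mathrm R}(x)$ is the Rosser provability predicate of a proof predicate $\Prf_T(x,y)$ and $\PA\vdash\sigma\leftrightarrow\PR_T^{\mathrm R}(\gn{\sigma})$; I would take $\sigma':\equiv\PR_T^{\R}(\gn{\neg\sigma})$, which is $\Sigma_1$. Condition $\PA\vdash\neg(\sigma\land\sigma')$ is immediate from $\PA\vdash\neg\bigl(\PR_T^{\mathrm R}(\gn{\sigma})\land\PR_T^{\R}(\gn{\neg\sigma})\bigr)$ and $\PA\vdash\sigma\leftrightarrow\PR_T^{\mathrm R}(\gn{\sigma})$; in particular $\PA\vdash\sigma'\to\neg\sigma$, hence $T\vdash\sigma'\to\neg\sigma$, and so $\PA\vdash\PR_T(\gn{\sigma'})\to\PR_T(\gn{\neg\sigma})$ by the derivability conditions. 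Combining this with the displayed implication $\PA\vdash\PR_T(\gn{\sigma})\lor\PR_T(\gn{\neg\sigma})\to\PR_T^{\mathrm R}(\gn{\sigma})\lor\PR_T^{\R}(\gn{\neg\sigma})$ and rewriting the succedent as $\sigma\lor\sigma'$ gives $\PA\vdash\PR_T(\gn{\sigma})\lor\PR_T(\gn{\sigma'})\to\sigma\lor\sigma'$, which is the second requirement in $(1)$.

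For $(1)\Rightarrow(2)$, write $\sigma$ and $\sigma'$ as $\exists x\,\delta(x)$ and $\exists x\,\delta'(x)$ with $\delta,\delta'$ being $\Delta_0$. From $\PA\vdash\neg(\sigma\land\sigma')$ one first checks $\PA\vdash\sigma\leftrightarrow(\sigma\pc\sigma')$, so it is enough to construct a proof predicate $\Prf_T$ of $T$ whose Rosser provability predicate satisfies $\PA\vdash\PR_T^{\mathrm R}(\gn{\sigma})\leftrightarrow(\sigma\pc\sigma')$ and, dually, $\PA\vdash\PR_T^{\R}(\gn{\neg\sigma})\leftrightarrow(\sigma'\prec\sigma)$. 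The plan is to leave $T$ itself untouched and only re-index its proofs: by the recursion theorem (so that the definition may refer to the very predicates $\Prf_T$, $\PR_T^{\mathrm R}$, $\PR_T^{\R}$ being built, as in Solovay's construction) I would define a primitive recursive $\Prf_T(x,y)$ that re-enumerates all $T$-proofs under fresh codes, arranging that among the codes of proofs of $\sigma$ and of $\neg\sigma$ some proof of $\sigma$ gets a code below every code of a proof of $\neg\sigma$ exactly when $\sigma\pc\sigma'$, and symmetrically that some proof of $\neg\sigma$ gets the least such code exactly when $\sigma'\prec\sigma$. The assumption that every $T$-theorem has infinitely many $\Proof_T$-proofs is precisely what leaves enough room to perform this re-coding while still exhausting $T$, so that $\Prf_T$ satisfies clauses 1--3 of the definition of a proof predicate. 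One then verifies the two displayed equivalences through analogues of the Solovay-style embedding lemmas for this modified predicate; this is where the hypotheses of $(1)$ are used, via the preliminary facts $\PA\vdash\sigma\lor\sigma'\to(\sigma\pc\sigma'\lor\sigma'\prec\sigma)$ and $\PA\vdash\neg(\sigma\pc\sigma'\land\sigma'\prec\sigma)$, via $\Sigma_1$-completeness, and via $\PA\vdash\PR_T(\gn{\sigma})\lor\PR_T(\gn{\sigma'})\to\sigma\lor\sigma'$, which together ensure that the ``race'' between $\Prf_T$-proofs of $\sigma$ and of $\neg\sigma$ is decided by, and only by, the corresponding witness comparison.

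The main obstacle is the construction in $(1)\Rightarrow(2)$. The re-coding must be \emph{at the same time} a genuine proof predicate of $T$ --- in particular $\PA$-provably capturing $\PR_T$ and functional in the sense of clause 3 --- and $\PA$-provably correct about which race it wins, and the latter is delicate precisely because $\PA$ does not prove $\Con_T$: one cannot argue that when $\sigma$ holds there are simply no $T$-proofs of $\neg\sigma$ to be outraced. Pushing the definition through the recursion theorem, and carrying out the bookkeeping that assigns every $T$-proof a new code without destroying functionality, is where the real work of the proof lies.
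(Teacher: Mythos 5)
The paper does not actually prove this statement; it is quoted as a Fact from \cite[Theorem 3.5]{Kur}, and the closest in-paper benchmark is the proof of Theorem \ref{MTR} (with its Lemmas \ref{L5}--\ref{L6}). Measured against that, your direction $(2)\Rightarrow(1)$ is correct and complete: taking $\sigma' :\equiv \PR_T^{\R}(\gn{\neg\sigma})$ and routing $\PR_T(\gn{\sigma'})$ through $\PR_T(\gn{\neg\sigma})$ works exactly as you say.

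In $(1)\Rightarrow(2)$ your overall plan --- re-enumerate $T$-proofs by a primitive recursive function $h$ so that the order in which $\sigma$ and $\neg\sigma$ first appear is dictated by the witness race between $\sigma$ and $\sigma'$ --- is the right shape (it is precisely the shape of the proof of Theorem \ref{MTR}), but you flag ``the real work'' without supplying the one idea that makes it go through. The enumeration \emph{must} postpone emitting both $\sigma$ and $\neg\sigma$ until the $\sigma$-versus-$\sigma'$ race has a winner: emitting $\neg\sigma$ any earlier is fatal, since a small $\Proof_T$-proof of $\neg\sigma$ could be followed much later by the least witness of $\sigma$, leaving $\sigma$ true while $\PR_T^{\mathrm R}(\gn{\sigma})$ is false. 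Consequently, for $x=h(y)$ to satisfy clause 2 of the definition of a proof predicate you must prove in $\PA$ that the race is decided whenever it needs to be, i.e.\ $\PA \vdash \PR_T(\gn{\sigma}) \lor \PR_T(\gn{\neg\sigma}) \to \sigma \lor \sigma'$ --- the analogue of Lemma \ref{L5}. In Theorem \ref{MTR} this is secured by choosing $\varphi$ as a fixed point; here $\varphi$ is forced to be $\sigma$ itself, so that route is closed, and the second disjunct does \emph{not} follow from the ingredients you list ($\Sigma_1$-completeness, the witness-comparison identities, and $\PR_T(\gn{\sigma}) \lor \PR_T(\gn{\sigma'}) \to \sigma \lor \sigma'$).

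What fills the gap is formalized L\"ob's theorem. Argue in $\PA$ under $\neg\sigma \land \neg\sigma' \land \PR_T(\gn{\neg\sigma})$: since the implication $\PR_T(\gn{\sigma'}) \to \sigma \lor \sigma'$ is a theorem of $T$, the derivability conditions together with $\PR_T(\gn{\neg\sigma})$ yield $\PR_T(\gn{\PR_T(\gn{\sigma'}) \to \sigma'})$; formalized L\"ob then gives $\PR_T(\gn{\sigma'})$, hence $\sigma \lor \sigma'$ by hypothesis $(1)$, contradicting $\neg\sigma \land \neg\sigma'$. With this lemma in hand (note it also yields $\PA \vdash \neg\Con_T \to \sigma \lor \sigma'$, via $\neg\Con_T \to \PR_T(\gn{\sigma})$), the remainder of your construction can be carried out exactly as in the proof of Theorem \ref{MTR} specialized to $\varphi = \sigma$, $\sigma_0 = \sigma$, $\sigma_1 = \sigma'$. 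Without it, the ``wait for the race'' strategy can leave $\neg\sigma$ un-enumerated even though $T \vdash \neg\sigma$, so $x=h(y)$ fails to be a proof predicate; this is a genuine missing idea rather than deferred bookkeeping.
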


This fact can be seen as a kind of the FGH theorem for Rosser provability predicates because it deals with the equivalence $\sigma \leftrightarrow \PR_T^{\mathrm{R}}(\gn{\varphi})$ where $\varphi$ is in particular $\sigma$. 
Inspired by this fact, we prove the following theorem. 

\begin{thm}\label{MTR}
For any $\Sigma_1$ sentences $\sigma_0$ and $\sigma_1$, the following are equivalent: 
\begin{enumerate}
	\item $\PA \vdash \neg (\sigma_0 \land \sigma_1)$ and $\PA \vdash \neg \Con_T \to \sigma_0 \lor \sigma_1$. 
	\item There are a Rosser provability predicate $\PR_T^{\mathrm R}(x)$ of $T$ and an $\LA$-sentence $\varphi$ such that
	\[
		\PA \vdash \sigma_0 \leftrightarrow \PR_T^{\mathrm R}(\gn{\varphi}) \ \text{and}\ \PA \vdash \sigma_1 \leftrightarrow \PR_T^{\R}(\gn{\neg \varphi}).
	\] 
\end{enumerate}
\end{thm}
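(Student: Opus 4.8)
The plan is to prove the two implications separately, with $(2 \Rightarrow 1)$ being the easy direction and $(1 \Rightarrow 2)$ requiring a Rosser-style fixed-point construction.

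For $(2 \Rightarrow 1)$: suppose we have a Rosser provability predicate $\PR_T^{\mathrm R}(x)$ of a proof predicate $\Prf_T(x,y)$ and a sentence $\varphi$ with $\PA \vdash \sigma_0 \leftrightarrow \PR_T^{\mathrm R}(\gn{\varphi})$ and $\PA \vdash \sigma_1 \leftrightarrow \PR_T^{\R}(\gn{\neg\varphi})$. The mutual inconsistency $\PA \vdash \neg(\sigma_0 \land \sigma_1)$ follows immediately from the already-noted fact that $\PA$ proves $\neg\bigl(\PR_T^{\mathrm R}(\gn{\varphi}) \land \PR_T^{\R}(\gn{\neg\varphi})\bigr)$. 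For the second conjunct, I would argue in $\PA$ assuming $\neg\Con_T$, i.e.\ $\PR_T(\gn{0=1})$; since $T$ proves $0=1 \to \varphi$ and $0=1 \to \neg\varphi$, we get $\PR_T(\gn{\varphi})$ and $\PR_T(\gn{\neg\varphi})$, hence (using clause 2 of the proof predicate definition) $\exists y\,\Prf_T(\gn{\varphi},y)$ and $\exists y\,\Prf_T(\gn{\neg\varphi},y)$; comparing the least such witnesses and using the already-noted disjunction property $\PR_T(\gn{\varphi}) \lor \PR_T(\gn{\neg\varphi}) \to \PR_T^{\mathrm R}(\gn{\varphi}) \lor \PR_T^{\R}(\gn{\neg\varphi})$ gives $\sigma_0 \lor \sigma_1$.

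For $(1 \Rightarrow 2)$: this is the substantive direction. I would fix a $\Delta_0$ witness formula for each of $\sigma_0, \sigma_1$, say $\sigma_i \equiv \exists x\,\delta_i(x)$, and use the Fixed Point Lemma to simultaneously construct a sentence $\varphi$ and a new proof predicate $\Prf_T'(x,y)$ whose behavior on the codes $\gn{\varphi}$ and $\gn{\neg\varphi}$ is controlled by the witnesses of $\delta_0$ and $\delta_1$, while on all other inputs $\Prf_T'$ agrees with the given $\Proof_T$. The idea: define $\Prf_T'(x,y)$ to hold iff either ($x$ is neither $\gn{\varphi}$ nor $\gn{\neg\varphi}$ and $\Proof_T(x,y)$) or ($x = \gn{\varphi}$ and $y$ codes a pair $\langle p, w\rangle$ where $p$ is a genuine $T$-proof of $\varphi$, or $w$ is a $\delta_0$-witness — arranged so that the ``first'' witness on the $\varphi$ side compared to the $\neg\varphi$ side tracks exactly which of $\sigma_0, \sigma_1$ becomes true) or symmetrically for $\gn{\neg\varphi}$. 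One must check this is a legitimate proof predicate: primitive recursiveness is clear; condition 2 (that $\exists y\,\Prf_T'(x,y)$ is $\PA$-equivalent to $\PR_T(x)$) uses $\PA \vdash \neg\Con_T \to \sigma_0 \lor \sigma_1$ to handle the case where $\varphi$ or $\neg\varphi$ is $T$-provable; condition 3 (functionality — no $y$ is a proof of two distinct sentences) uses $\PA \vdash \neg(\sigma_0 \land \sigma_1)$ to ensure the witnesses for $\varphi$ and for $\neg\varphi$ under the new predicate never collide on the same $y$. Then $\PR_T^{\mathrm R}$ and $\PR_T^{\R}$ built from $\Prf_T'$ should satisfy, provably in $\PA$, that $\PR_T^{\mathrm R}(\gn{\varphi})$ holds iff the $\varphi$-side witness appears no later than the $\neg\varphi$-side witness, which by construction is equivalent to $\sigma_0$; and dually for $\sigma_1$.

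The main obstacle I anticipate is the bookkeeping in the self-referential definition of $\Prf_T'$: the predicate must refer to $\gn{\varphi}$ and $\gn{\neg\varphi}$, but $\varphi$ is defined via the Fixed Point Lemma applied to a formula that mentions $\Prf_T'$, so one must set up the diagonalization so that all of $\varphi$, $\Prf_T'$, and the term $\dot\neg\gn{\varphi}$ are available simultaneously and the resulting object is provably (in $\PA$) a proof predicate. The delicate point is verifying conditions 2 and 3 of the proof-predicate definition \emph{inside $\PA$}, which is exactly where the two hypotheses in clause (1) get used; getting the witness-comparison alignment right so that ``$\varphi$-witness $\preccurlyeq$ $\neg\varphi$-witness'' provably matches $\sigma_0$ (rather than, say, $\sigma_0$ strictly preceding $\sigma_1$) will require care with the strict-versus-nonstrict $\prec$ and $\preccurlyeq$ distinction.
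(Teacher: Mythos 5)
Your $(2 \Rightarrow 1)$ direction is fine and matches the paper's (which simply cites the properties of witness comparison formulas). In $(1 \Rightarrow 2)$ the overall shape is right --- build a nonstandard proof predicate that agrees with $\Proof_T$ off $\{\gn{\varphi}, \gn{\neg\varphi}\}$ and manipulates the witnesses for $\varphi$ and $\neg\varphi$ so that the Rosser comparison tracks $\sigma_0$ versus $\sigma_1$ --- but there is a genuine gap: you never say what fixed--point equation $\varphi$ satisfies, and that equation is the load--bearing part of the proof. Your ``padding'' mechanism, in which $\Prf_T'(\gn{\varphi}, y)$ can hold because $y$ encodes a $\delta_0$-witness, cannot satisfy condition 2 of the definition of a proof predicate: under the disjunctive reading, $\exists y\, \Prf_T'(\gn{\varphi}, y)$ becomes $\PA$-equivalent to $\PR_T(\gn{\varphi}) \lor \sigma_0$, which is not equivalent to $\PR_T(\gn{\varphi})$ unless $\sigma_0 \to \PR_T(\gn{\varphi})$ --- and nothing in hypothesis (1) gives you that. (Under the conjunctive reading you lose genuine proofs of $\varphi$ when $\sigma_0$ fails, which is just as bad.) The hypothesis $\neg\Con_T \to \sigma_0 \lor \sigma_1$ only covers the case where $T$ is inconsistent; it says nothing about a consistent $T$ that happens to prove $\varphi$.

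The paper's resolution is to take $\varphi$ to be a specific Rosser-style fixed point, $\PA \vdash \varphi \leftrightarrow \exists y\, \bigl[(\Proof_T(\gn{\neg\varphi}, y) \lor \tau_0(y)) \land \forall z\, {\leq}\, y\, (\neg\Proof_T(\gn{\varphi}, z) \land \neg\tau_1(z))\bigr]$, interleaving proofs of $\neg\varphi$ with witnesses of $\sigma_0$ against proofs of $\varphi$ with witnesses of $\sigma_1$. This choice yields the key lemma $\PA \vdash \PR_T(\gn{\varphi}) \lor \PR_T(\gn{\neg\varphi}) \to \sigma_0 \lor \sigma_1$, which is exactly what lets the new proof predicate be defined as a re-enumeration of the \emph{actual} $T$-theorems that merely \emph{delays and reorders} the appearance of $\varphi$ and $\neg\varphi$ (outputting $\varphi$ first once a $\tau_0$-witness is seen, $\neg\varphi$ first once a $\tau_1$-witness is seen, and waiting otherwise): the lemma guarantees the wait terminates whenever either sentence is provable, so condition 2 holds, and functionality is automatic because the predicate has the form $x = h(y)$ for a function $h$ (your use of $\neg(\sigma_0 \land \sigma_1)$ is needed only to make the two reordering cases mutually exclusive). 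Without identifying this fixed point and proving that lemma, the verification of condition 2 --- which you correctly flag as the delicate point --- cannot go through.
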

\begin{proof}
$(2 \Rightarrow 1)$: This implication follows from the properties of witness comparison formulas. 

$(1 \Rightarrow 2)$: 
We may assume that $\sigma_0$ and $\sigma_1$ are of the forms $\exists x \, \tau_0(x)$ and $\exists x \, \tau_1(x)$ for some $\Delta_0$ formulas $\tau_0(x)$ and $\tau_1(x)$, respectively. 
By the Fixed Point Lemma, let $\varphi$ be a $\Sigma_1$ sentence satisfying the following equivalence: 
\[
	\PA \vdash \varphi \leftrightarrow \exists y \, \bigl[(\Proof_T(\gn{\neg \varphi}, y) \lor \tau_0(y)) \land \forall z \, {\leq} \, y \, (\neg \Proof_T(\gn{\varphi}, z) \land \neg \tau_1(z)) \bigr].  
\]
Also, let $\varphi^*$ be the $\Sigma_1$ sentence 
\[
	\exists z \, \bigl[(\Proof_T(\gn{\varphi}, z) \lor \tau_1(z)) \land \forall y\, {<} \, z \, (\neg \Proof_T(\gn{\neg \varphi}, y) \land \neg \tau_0(y)) \bigr]. 
\]

\begin{lem}\label{L5}
$\PA \vdash \PR_T(\gn{\varphi}) \lor \PR_T(\gn{\neg \varphi}) \to \sigma_0 \lor \sigma_1$. 
\end{lem}
\begin{proof}
Let $\Prov_T^{\mathrm R}(x)$ be the Rosser provability predicate of $\Proof_T(x, y)$. 
Then, $\PA \vdash \PR_T(\gn{\varphi}) \lor \PR_T(\gn{\neg \varphi}) \to \Prov_T^{\mathrm R}(\gn{\varphi}) \lor \Prov_T^{\R}(\gn{\neg \varphi})$. 
By the definition of $\varphi^*$, 
\[
	\PA \vdash \Prov_T^{\mathrm R}(\gn{\varphi}) \land \neg \sigma_0 \to \varphi^*.
\]
Then, 
\[
	\PA \vdash \Prov_T^{\mathrm R}(\gn{\varphi}) \land \neg \sigma_0 \to \PR_T(\gn{\varphi^*})
\]
because $\varphi^*$ is a $\Sigma_1$ sentence. 
Since $\PA \vdash \neg (\varphi \land \varphi^*)$, we have
\begin{align}\label{L5fml}
	\PA \vdash \Prov_T^{\mathrm R}(\gn{\varphi}) \land \neg \sigma_0 \to \neg \Con_T.
\end{align}

By the choice of $\varphi$, 
\[
	\PA \vdash \Prov_T^{\R}(\gn{\neg \varphi}) \land \neg \sigma_1 \to \varphi,
\]
and we also obtain
\[
	\PA \vdash \Prov_T^{\R}(\gn{\neg \varphi}) \land \neg \sigma_1 \to \neg \Con_T.
\] 
From this with (\ref{L5fml}), 
\[
	\PA + \Con_T \vdash \Prov_T^{\mathrm R}(\gn{\varphi}) \lor \Prov_T^{\R}(\gn{\neg \varphi}) \to \sigma_0 \lor \sigma_1. 
\]
Thus, 
\[
	\PA + \Con_T \vdash \PR_T(\gn{\varphi}) \lor \PR_T(\gn{\neg \varphi}) \to \sigma_0 \lor \sigma_1.
\] 
Since $\PA + \neg \Con_T \vdash \sigma_0 \lor \sigma_1$, we conclude
\[
	\PA \vdash \PR_T(\gn{\varphi}) \lor \PR_T(\gn{\neg \varphi}) \to \sigma_0 \lor \sigma_1. \tag*{\mbox{\qedhere}}
\] 
\end{proof}

We proceed with the main proof. 
We recursively define a primitive recursive function $h$ and an increasing sequence $\langle k_i \rangle_{i \in \omega}$ of natural numbers simultaneously by referring to $T$-proofs in stages. 
The function $h$ will be defined to output all theorems of $T$ and the Rosser predicate of the proof predicate $x = h(y)$ will be a required one.
At the beginning of Stage $m$, the values of $k_0, \ldots, k_{m-1}, k_m$ and $h(0), \ldots, h(k_{m-1})$ have already been defined. 

Here we give our definition of the function $h$. 
In the definition, we identify each $\LA$-formula with its G\"odel number. 
First, let $k_0 = 0$. 

At Stage $m$, 
\begin{itemize}
	\item If $m$ is not a $T$-proof of any $\LA$-formula, then let $k_{m+1} : = k_m$ and go to Stage $m+1$. 
	\item If $m$ is a $T$-proof of an $\LA$-formula $\xi$ which is neither $\varphi$ nor $\neg \varphi$, then let $k_{m+1} : = k_m + 1$ and $h(k_m) : = \xi$, and go to the next stage. 
	\item If $m$ is a $T$-proof of an $\LA$-sentence $\xi$ which is either $\varphi$ or $\neg \varphi$, and $h$ already outputs at least one of $\varphi$ or $\neg \varphi$ before Stage $m$, then let $k_{m+1} : = k_m + 1$ and $h(k_m) : = \xi$, and go to Stage $m+1$. 
	\item If $m$ is a $T$-proof of either $\varphi$ or $\neg \varphi$, and $f$ does not output $\varphi$ and $\neg \varphi$ before Stage $m$, then we distinguish the following three cases: 
	\begin{description}
		\item [(a)] If $\exists z \, {\leq m} \, \tau_0(z)$ holds, then let $k_{m+1} : = k_m + 1$ and $h(k_m) : = \varphi$.
		\item [(b)] If $\exists z \, {\leq m} \, \tau_1(z)$ holds, then let $k_{m+1} : = k_m + 1$ and $h(k_m) : = \neg \varphi$. 
		\item [(c)] Otherwise, let $k_{m+1} : = k_m$. 
	\end{description}
	Go to the next Stage $m+1$. 
\end{itemize}
This completes our definition the function $h$. 
Since $\PA \vdash \neg (\sigma_0 \land \sigma_1)$, we have that $\PA$ proves that there is no $m$ satisfying both $\exists z \, {\leq} \, m\, \tau_0(z)$ and $\exists z \, {\leq} \, m\, \tau_1(z)$. 
Thus Cases (a) and (b) in the definition of $h$ are mutually exclusive.

We show that the formula $x = h(y)$ is a proof predicate of $T$. 
It suffices to show the following lemma: 

\begin{lem}\label{L6}
$\PA \vdash \forall x\, \bigl(\PR_T(x) \leftrightarrow \exists y \, (x = h(y)) \bigr)$. 
\end{lem}
\begin{proof}
We argue in $\PA$. 

$(\rightarrow)$: Suppose that $\xi$ is provable in $T$. 

If $\xi$ is neither $\varphi$ nor $\neg \varphi$, then for a $T$-proof $m$ of $\xi$, $h(k_m) = \xi$. 

If $\xi$ is either $\varphi$ or $\neg \varphi$, then by Lemma \ref{L5}, $\sigma_0 \lor \sigma_1$ holds. 
However, $\sigma_0$ and $\sigma_1$ cannot be true at the same time. 
We show that $\xi$ is output by $h$. 
We distinguish the following two cases: 

\begin{itemize}
	\item Case 1: $\sigma_0$ holds. \\
	Let $n$ be the least number such that $\tau_0(n)$ holds, and let $m$ be the least number such that $m \geq n$ and $m$ is a $T$-proof of either $\varphi$ or $\neg \varphi$. 
	Then, $h(k_m) = \varphi$ by the definition of $h$. 
	If $\xi$ is $\varphi$, we are done. 
	If $\xi$ is $\neg \varphi$, then let $m' > m$ be a $T$-proof of $\neg \varphi$. 
	Such an $m'$ exists because $\neg \varphi$ has infinitely many $T$-proofs. 
	Since $h$ already outputs $\varphi$ before Stage $m'$, by the definition of $h$, $h(k_{m'}) = \neg \varphi$. 

	\item Case 2: $\sigma_1$ holds. \\
	It is proved that $h$ outputs $\xi$ as in the proof of Case 1 by considering the least number $n$ such that $\tau_1(n)$ holds. 
\end{itemize}

$(\leftarrow)$: Suppose that $\xi$ is output by $h$. 
If $\xi$ is neither $\varphi$ nor $\neg \varphi$, then $h(k_m) = \xi$ implies that $m$ is a $T$-proof of $\xi$, and so $\xi$ is provable in $T$. 

If $\xi$ is $\varphi$ or $\neg \varphi$, then let $m$ be the least number such that $h(k_m)$ is either $\varphi$ or $\neg \varphi$. 
We show that $\xi$ is provable in $T$. 
We distinguish the following four cases: 

\begin{itemize}
	\item Case 1: $\xi$ is $\varphi$ and $h(k_m) = \varphi$. \\
	By the definition of $h$, $\exists z \, {\leq} \, m\, \tau_0(z)$ holds. 
	Then, $\sigma_0$ and $\neg \sigma_1$ hold. 
	Suppose that $\varphi$ is not provable in $T$, then $\varphi$ holds by the definition. 
	Since $\varphi$ is a $\Sigma_1$ sentence, it is provable in $T$, a contradiction. 
	Therefore, $\varphi$ is provable in $T$. 
	
	\item Case 2: $\xi$ is $\varphi$ and $h(k_m) = \neg \varphi$. \\
	Then, $h(k_{m'}) = \varphi$ for some $m' > m$. 
	Since $\neg \varphi$ is already output before Stage $m'$, we find that $m'$ is a $T$-proof of $\varphi$, and hence $\varphi$ is provable in $T$. 

	\item Case 3: $\xi$ is $\neg \varphi$ and $h(k_m) = \varphi$. \\
	Then, for some $T$-proof $m' > m$ of $\neg \varphi$, $h(k_{m'}) = \neg \varphi$. 
	Thus, $\neg \varphi$ is provable in $T$. 

	\item Case 4: $\xi$ is $\neg \varphi$ and $h(k_m) = \neg \varphi$. \\
	Then, $\exists z \, {\leq} \, m\, \tau_1(z)$ holds, and so $\sigma_1$ and $\neg \sigma_0$ hold. 
	Suppose that $\neg \varphi$ is not provable in $T$. 
	Then, $\varphi^*$ holds, and is provable in $T$. 
	Since $\varphi^* \to \neg \varphi$ is $T$-provable, $\neg \varphi$ is also $T$-provable. 
	This is a contradiction. 
	Therefore, $\neg \varphi$ is provable in $T$. \qedhere
\end{itemize}
\end{proof}

We resume the main proof. 
Let $\PR_h^{\mathrm R}(x)$ be the Rosser provability predicate of the proof predicate $x = h(y)$ of $T$. 
Finally, we show that $\PA$ proves $\sigma_0 \leftrightarrow \PR_h^{\mathrm R}(\gn{\varphi})$. 
A proof of $\PA \vdash \sigma_1 \leftrightarrow \PR_h^{\R}(\gn{\neg \varphi})$ is similar, and we omit it. 
We argue in $\PA$. 

$(\rightarrow)$: 
Suppose that $\sigma_0$ holds. 
Let $n$ be the least number such that $\tau_0(n)$ holds. 
Then, either $\varphi$ or $\varphi^*$ holds, and hence either one of them is provable in $T$. 
Then, either $\varphi$ or $\neg \varphi$ is $T$-provable. 
Let $m$ be the least number such that $m \geq n$ and $m$ is a $T$-proof of $\varphi$ or $\neg \varphi$. 
Then, $h$ does not output $\neg \varphi$ before Stage $m$, and $h(k_m) = \varphi$. 
This means $\PR_h^{\mathrm R}(\gn{\varphi})$ holds. 

$(\leftarrow)$: Suppose that $h(k_m) = \varphi$ and before Stage $m$, $h$ does not output $\neg \varphi$. 
For the least such an $m$, $\exists z \, {\leq} \, m\, \tau_0(z)$ holds, and thus $\sigma_0$ holds. 
\end{proof}

\begin{cor}\label{CorRos0}
For any $\Sigma_1$ sentence $\sigma$, the following are equivalent: 
\begin{enumerate}
	\item There exists a $\Sigma_1$ sentence $\sigma'$ such that $\PA \vdash \neg (\sigma \land \sigma')$ and $\PA \vdash \neg \Con_T \to \sigma \lor \sigma'$. 
	\item There exists a Rosser provability predicate $\PR_T^{\mathrm R}(x)$ of $T$ and an $\LA$-sentence $\varphi$ such that $\PA \vdash \sigma \leftrightarrow \PR_T^{\mathrm R}(\gn{\varphi})$. 
\end{enumerate}
\end{cor}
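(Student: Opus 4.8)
The plan is to obtain both directions directly from Theorem \ref{MTR} together with the two elementary properties of Rosser provability predicates recorded earlier in this section, namely that $\PA$ proves $\neg\bigl(\PR_T^{\mathrm R}(\gn{\varphi}) \land \PR_T^{\R}(\gn{\neg\varphi})\bigr)$ and that $\PA$ proves $\PR_T(\gn{\varphi}) \lor \PR_T(\gn{\neg\varphi}) \to \PR_T^{\mathrm R}(\gn{\varphi}) \lor \PR_T^{\R}(\gn{\neg\varphi})$.

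For $(1 \Rightarrow 2)$, I would apply Theorem \ref{MTR} to the pair $\sigma_0 := \sigma$ and $\sigma_1 := \sigma'$. Clause (1) of the corollary says exactly that $\PA \vdash \neg(\sigma_0 \land \sigma_1)$ and $\PA \vdash \neg\Con_T \to \sigma_0 \lor \sigma_1$, which is clause (1) of Theorem \ref{MTR}. Hence there are a Rosser provability predicate $\PR_T^{\mathrm R}(x)$ of $T$ and an $\LA$-sentence $\varphi$ with $\PA \vdash \sigma \leftrightarrow \PR_T^{\mathrm R}(\gn{\varphi})$; we simply ignore the accompanying conclusion $\PA \vdash \sigma' \leftrightarrow \PR_T^{\R}(\gn{\neg\varphi})$.

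For $(2 \Rightarrow 1)$, suppose $\PR_T^{\mathrm R}(x)$ is the Rosser provability predicate of a proof predicate $\Prf_T(x, y)$ and $\PA \vdash \sigma \leftrightarrow \PR_T^{\mathrm R}(\gn{\varphi})$. I would take $\sigma'$ to be the auxiliary $\Sigma_1$ sentence $\PR_T^{\R}(\gn{\neg\varphi})$ associated to $\Prf_T(x,y)$. Then $\PA \vdash \neg(\sigma \land \sigma')$ follows at once from $\PA \vdash \sigma \leftrightarrow \PR_T^{\mathrm R}(\gn{\varphi})$ and the incompatibility property above. For the remaining condition, note that since $T \vdash 0 = 1 \to \varphi$, the derivability conditions give $\PA \vdash \neg\Con_T \to \PR_T(\gn{\varphi})$, and hence $\PA \vdash \neg\Con_T \to \PR_T(\gn{\varphi}) \lor \PR_T(\gn{\neg\varphi})$; combining this with the totality property $\PR_T(\gn{\varphi}) \lor \PR_T(\gn{\neg\varphi}) \to \PR_T^{\mathrm R}(\gn{\varphi}) \lor \PR_T^{\R}(\gn{\neg\varphi})$ yields $\PA \vdash \neg\Con_T \to \sigma \lor \sigma'$, as required.

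No genuine obstacle is expected: all the substantive work has already been done in Theorem \ref{MTR}, and the converse direction is only a routine unwinding of the defining properties of $\PR_T^{\mathrm R}$ and $\PR_T^{\R}$. The only mild point requiring attention is recognizing that the correct choice of $\sigma'$ in $(2 \Rightarrow 1)$ is precisely the auxiliary predicate $\PR_T^{\R}(\gn{\neg\varphi})$, i.e.\ the same witness comparison companion that appears in the statement of Theorem \ref{MTR}.
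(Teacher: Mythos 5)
Your proposal is correct and follows essentially the same route as the paper: direction $(1 \Rightarrow 2)$ is an immediate application of Theorem \ref{MTR} with $\sigma_0 = \sigma$, $\sigma_1 = \sigma'$, and direction $(2 \Rightarrow 1)$ uses exactly the paper's choice $\sigma' := \PR_T^{\R}(\gn{\neg\varphi})$. The details you supply for the converse (the incompatibility and totality properties of the witness comparison formulas, plus $\PA \vdash \neg\Con_T \to \PR_T(\gn{\varphi})$) are the intended justification, which the paper leaves implicit.
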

\begin{proof}
$(1 \Rightarrow 2)$: Immediate from Theorem \ref{MTR}. 

$(2 \Rightarrow 1)$: This implication is derived by letting $\sigma'$ be the $\Sigma_1$ sentence $\PR_T^{\R}(\gn{\neg \varphi})$. 
\end{proof}

The $(\PA + \Con_T)$-provable equivalence in the statement of the FGH theorem is equivalent to $\PA \vdash (\sigma \lor \neg \Con_T) \leftrightarrow \PR_T(\gn{\varphi})$. 
In this viewpoint, the following corollary seems to be a natural counterpart of the FGH theorem in terms of Rosser provability predicates.  

\begin{cor}\label{CorRos1}
For any $\Sigma_1$ sentence $\sigma$, there exist a Rosser provability predicate $\PR_T^{\mathrm R}(x)$ of $T$ and an $\LA$-sentence $\varphi$ such that
\[
	\PA \vdash (\sigma \lor \neg \Con_T) \leftrightarrow \PR_T^{\mathrm R}(\gn{\varphi}).
\] 
\end{cor}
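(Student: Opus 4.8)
The plan is to deduce the corollary directly from Theorem \ref{MTR} by a suitable choice of the two $\Sigma_1$ sentences in its hypothesis. First I would observe that $\neg \Con_T$ is literally the $\Sigma_1$ sentence $\PR_T(\gn{0 = 1})$, so that $\sigma \lor \neg \Con_T$ is $\PA$-provably equivalent to a $\Sigma_1$ sentence; fix such a $\Sigma_1$ representative, write it in the form $\exists x\, \tau_0(x)$ demanded by Theorem \ref{MTR}, and call it $\sigma_0$. For the second sentence I would simply take $\sigma_1$ to be the $\Sigma_1$ sentence $0 = 1$.

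Next I would verify clause (1) of Theorem \ref{MTR} for the pair $\sigma_0, \sigma_1$. The condition $\PA \vdash \neg(\sigma_0 \land \sigma_1)$ is immediate since $\PA \vdash \neg\,(0 = 1)$, and the condition $\PA \vdash \neg \Con_T \to \sigma_0 \lor \sigma_1$ holds because $\PA$ already proves $\neg \Con_T \to \sigma_0$ by the very choice of $\sigma_0$. Hence clause (2) of Theorem \ref{MTR} applies and provides a Rosser provability predicate $\PR_T^{\mathrm R}(x)$ of $T$ and an $\LA$-sentence $\varphi$ with $\PA \vdash \sigma_0 \leftrightarrow \PR_T^{\mathrm R}(\gn{\varphi})$; the companion equivalence $\PA \vdash \sigma_1 \leftrightarrow \PR_T^{\R}(\gn{\neg \varphi})$ produced by the theorem is not needed. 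Since $\PA \vdash \sigma_0 \leftrightarrow (\sigma \lor \neg \Con_T)$, this gives exactly $\PA \vdash (\sigma \lor \neg \Con_T) \leftrightarrow \PR_T^{\mathrm R}(\gn{\varphi})$, which is the desired conclusion.

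I do not expect any real obstacle here: the substance is entirely contained in Theorem \ref{MTR}, and the only points requiring attention are the routine rewriting of a disjunction of two $\Sigma_1$ sentences as a single $\Sigma_1$ sentence of the syntactic shape $\exists x\, \tau_0(x)$, and the trivial verification of clause (1). The choice of $\sigma_1$ is not delicate: any $\Sigma_1$ sentence that $\PA$ refutes would serve equally well, since then $\PA \vdash \neg(\sigma_0 \land \sigma_1)$ for the same reason and the implication $\PA \vdash \neg \Con_T \to \sigma_0 \lor \sigma_1$ continues to hold because $\neg \Con_T \to \sigma_0$ is provable outright.
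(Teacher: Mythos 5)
Your proposal is correct and is essentially the paper's own argument: the paper applies Corollary \ref{CorRos0} (whose relevant direction is itself immediate from Theorem \ref{MTR}) to the pair $\sigma \lor \neg \Con_T$ and $0=1$, with exactly the same trivial verifications of the two hypotheses. The only cosmetic difference is that you invoke Theorem \ref{MTR} directly rather than via the intermediate corollary.
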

\begin{proof}
For the $\Sigma_1$ sentences $\sigma \lor \neg \Con_T$ and $0=1$, we have that $\PA$ proves $\neg \bigl((\sigma \lor \neg \Con_T) \land 0=1 \bigr)$ and $\neg \Con_T \to (\sigma \lor \neg \Con_T) \lor 0=1$. 
By Corollary \ref{CorRos0}, there exist a Rosser provability predicate $\PR_T^{\mathrm R}(x)$ of $T$ and an $\LA$-sentence $\varphi$ such that $\PA \vdash (\sigma \lor \neg \Con_T) \leftrightarrow \PR_T^{\mathrm R}(\gn{\varphi})$. 
\end{proof}

Since $\PA + \Con_T \vdash \PR_T(\gn{\varphi}) \leftrightarrow \PR_T^{\mathrm R}(\gn{\varphi})$, the FGH theorem directly follows from Corollary \ref{CorRos1}. 

We show a version of the FGH theorem with respect to Rosser provability predicates corresponding to the representability of computable sets. 

\begin{cor}\label{CorRos2}
For any $\Delta_1(\PA)$ sentence $\delta$, there exist a Rosser provability predicate $\PR_T^{\mathrm R}(x)$ of $T$ and an $\LA$-sentence $\varphi$ such that
	\[
		\PA \vdash \delta \leftrightarrow \PR_T^{\mathrm R}(\gn{\varphi}) \ \text{and}\ \PA \vdash \neg \delta \leftrightarrow \PR_T^{\R}(\gn{\neg \varphi}).
	\] 
\end{cor}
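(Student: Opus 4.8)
The plan is to read this off directly from Theorem \ref{MTR} by feeding it a well-chosen pair of $\Sigma_1$ sentences. Recall that a $\Delta_1(\PA)$ sentence $\delta$ is, by definition, $\PA$-provably equivalent both to a $\Sigma_1$ sentence and to a $\Pi_1$ sentence. So first I would fix a $\Sigma_1$ sentence $\sigma_0$ with $\PA \vdash \delta \leftrightarrow \sigma_0$, and a $\Pi_1$ sentence $\pi$ with $\PA \vdash \delta \leftrightarrow \pi$; then $\neg \pi$ is (equivalent to) a $\Sigma_1$ sentence, so I would let $\sigma_1$ be this $\Sigma_1$ sentence, and note $\PA \vdash \neg \delta \leftrightarrow \sigma_1$.

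Next I would verify that the pair $\sigma_0, \sigma_1$ satisfies clause (1) of Theorem \ref{MTR}. Since $\PA$ proves $\sigma_0 \leftrightarrow \delta$ and $\sigma_1 \leftrightarrow \neg \delta$, it proves $\neg(\sigma_0 \land \sigma_1)$, because $\sigma_0 \land \sigma_1$ is $\PA$-equivalent to $\delta \land \neg \delta$; likewise $\PA$ proves $\sigma_0 \lor \sigma_1$, because this is $\PA$-equivalent to $\delta \lor \neg \delta$, and in particular $\PA \vdash \neg \Con_T \to \sigma_0 \lor \sigma_1$. Then I would invoke the implication $(1 \Rightarrow 2)$ of Theorem \ref{MTR} to obtain a Rosser provability predicate $\PR_T^{\mathrm R}(x)$ of $T$ and an $\LA$-sentence $\varphi$ with $\PA \vdash \sigma_0 \leftrightarrow \PR_T^{\mathrm R}(\gn{\varphi})$ and $\PA \vdash \sigma_1 \leftrightarrow \PR_T^{\R}(\gn{\neg \varphi})$. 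Combining these with $\PA \vdash \delta \leftrightarrow \sigma_0$ and $\PA \vdash \neg \delta \leftrightarrow \sigma_1$ yields the two displayed equivalences, and the proof is complete.

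There is no substantial obstacle here once Theorem \ref{MTR} is in hand: the only point needing a moment's care is the passage from the $\Delta_1(\PA)$ sentence $\delta$ to the appropriate $\Sigma_1$ pair, which rests on the observation that $\neg \delta$ is again $\Delta_1(\PA)$ and hence (up to $\PA$-provable equivalence) $\Sigma_1$, so that both $\sigma_0$ and $\sigma_1$ can be taken $\Sigma_1$ as Theorem \ref{MTR} demands. Everything else is routine bookkeeping with $\PA$-provable equivalences.
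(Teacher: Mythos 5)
Your proof is correct and follows essentially the same route as the paper: extract $\Sigma_1$ sentences $\sigma_0$ and $\sigma_1$ that are $\PA$-provably equivalent to $\delta$ and $\neg\delta$ respectively, check the hypotheses of Theorem \ref{MTR}, and apply it. The only difference is that you spell out why $\sigma_1$ exists (via the $\Pi_1$ representation of $\delta$), which the paper leaves implicit.
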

\begin{proof}
Since $\delta$ is $\Delta_1(\PA)$, there exist $\Sigma_1$ sentences $\sigma_0$ and $\sigma_1$ such that $\PA \vdash \delta \leftrightarrow \sigma_0$ and $\PA \vdash \neg \delta \leftrightarrow \sigma_1$. 
Then, we have $\PA \vdash \neg (\sigma_0 \land \sigma_1)$ and $\PA \vdash \sigma_0 \lor \sigma_1$. 
By Theorem \ref{MTR}, there exists a Rosser provability predicate $\PR_T^{\mathrm R}(x)$ of $T$ and an $\LA$-sentence $\varphi$ such that
	\[
		\PA \vdash \sigma_0 \leftrightarrow \PR_T^{\mathrm R}(\gn{\varphi}) \ \text{and}\ \PA \vdash \sigma_1 \leftrightarrow \PR_T^{\R}(\gn{\neg \varphi}). \tag*{\mbox{\qedhere}}
	\] 
\end{proof}

Corollary \ref{CorRos2} says that if a $\Sigma_1$ sentence $\sigma$ is $\Delta_1(\PA)$, then there exist a Rosser provability predicate $\PR_T^{\mathrm R}(x)$ of $T$ and an $\LA$-sentence $\varphi$ such that $\PA \vdash \sigma \leftrightarrow \PR_T^{\mathrm R}(\gn{\varphi})$. 
Does this hold for all $\Sigma_1$ sentences?
By Corollary \ref{CorRos0}, this question is rephrased as follows: For any $\Sigma_1$ sentence $\sigma$, does there exists a $\Sigma_1$ sentence $\sigma'$ such that $\PA \vdash \neg (\sigma \land \sigma')$ and $\PA \vdash \neg \Con_T \to \sigma \lor \sigma'$?
We show that this is not the case. 

\begin{prop}
There exists a $\Sigma_1$ sentence $\sigma$ such that for all $\Sigma_1$ sentences $\sigma'$, neither $\PA \vdash \neg (\sigma \land \sigma')$ nor $\PA \vdash \neg \Con_T \to \sigma \lor \sigma'$. 
That is, for all Rosser provability predicates $\PR_T^{\mathrm R}(x)$ of $T$ and all $\LA$-sentences $\varphi$, $\PA \nvdash \sigma \leftrightarrow \PR_T^{\mathrm R}(\gn{\varphi})$. 
\end{prop}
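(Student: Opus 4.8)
The plan is to reduce the statement, via Corollary~\ref{CorRos0}, to the fact that the $\Sigma_1$ and $\Pi_1$ sentences do not collapse over the theory $U:=\PA+\neg\Con_T$. First, $U$ is consistent: since $\PA\vdash\Con_T\to\Con_{\PA}$ and $\PA\nvdash\Con_{\PA}$ by G\"odel's second incompleteness theorem, we have $\PA\nvdash\Con_T$, so $\PA+\neg\Con_T$ is consistent. Next, I claim it suffices to exhibit a $\Sigma_1$ sentence $\sigma$ that is not $U$-provably equivalent to any $\Pi_1$ sentence. For such a $\sigma$, suppose a $\Sigma_1$ sentence $\sigma'$ satisfied both $\PA\vdash\neg(\sigma\land\sigma')$ and $\PA\vdash\neg\Con_T\to\sigma\lor\sigma'$; then, using $\PA\subseteq U$ and $U\vdash\neg\Con_T$, we would obtain $U\vdash\neg(\sigma\land\sigma')$ and $U\vdash\sigma\lor\sigma'$, hence $U\vdash\sigma\leftrightarrow\neg\sigma'$ with $\neg\sigma'$ a $\Pi_1$ sentence, contradicting the choice of $\sigma$. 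So no such $\sigma'$ exists, which is the assertion of the proposition, and by Corollary~\ref{CorRos0} it follows that $\PA\nvdash\sigma\leftrightarrow\PR_T^{\mathrm R}(\gn{\varphi})$ for every Rosser provability predicate $\PR_T^{\mathrm R}(x)$ of $T$ and every $\LA$-sentence $\varphi$. (Conversely, if a $\Sigma_1$ sentence is $U$-provably equivalent to a $\Pi_1$ sentence $\pi$, then $\sigma':=\neg\pi\land\neg\Con_T$ is a $\Sigma_1$ sentence witnessing the two conditions, so the proposition is in fact equivalent to the existence of such a $\sigma$.)

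To see that not every $\Sigma_1$ sentence is $U$-provably equivalent to a $\Pi_1$ sentence, argue by contradiction. If it were, then the function $g$ that, on input the code of a $\Sigma_1$ sentence $\rho$, searches through all pairs consisting of a $\Pi_1$ sentence $\pi$ and a $U$-proof and returns the code of the first $\pi$ for which the proof witnesses $U\vdash\rho\leftrightarrow\pi$, is total and computable; let $\Gamma(v,w)$ be a $\Sigma_1$ formula defining its graph. Let $\mathrm{True}_{\Pi_1}(v)$ be a $\Pi_1$ partial truth definition, dual to the $\Sigma_1$ one from the introduction. Then $\exists w\,(\Gamma(v,w)\land\neg\mathrm{True}_{\Pi_1}(w))$ is $\Sigma_1$, so by the Fixed Point Lemma there is a $\Sigma_1$ sentence $\sigma_0$ with $\PA\vdash\sigma_0\leftrightarrow\exists w\,(\Gamma(\gn{\sigma_0},w)\land\neg\mathrm{True}_{\Pi_1}(w))$. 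Writing $\pi^*$ for the $\Pi_1$ sentence whose code is $g(\gn{\sigma_0})$, one computes inside $\PA$ the value $g(\gn{\sigma_0})$ (which $\PA$ verifies to be the unique value, since it verifies the halting computation) and unwinds the truth definition to obtain $\PA\vdash\sigma_0\leftrightarrow\neg\pi^*$, whereas $U\vdash\sigma_0\leftrightarrow\pi^*$ by the very definition of $g$. Hence $U\vdash\pi^*\leftrightarrow\neg\pi^*$, so $U$ is inconsistent, contradicting what was shown above. This yields the required $\sigma$ and completes the proof.

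The creative step is the reduction in the first paragraph: via Corollary~\ref{CorRos0}, a suitable $\sigma'$ exists exactly when $\sigma$ becomes $\Pi_1$ modulo $\PA+\neg\Con_T$. After that, the only property of $\PA+\neg\Con_T$ that is used is its consistency, i.e.\ G\"odel's theorem, and the non-collapse of $\Sigma_1$ and $\Pi_1$ over a consistent c.e.\ theory is the standard diagonalization. The step requiring the most care is the complexity bookkeeping in the second paragraph: one must ensure that the diagonal sentence $\sigma_0$ can genuinely be taken to be $\Sigma_1$, which works because $\neg\mathrm{True}_{\Pi_1}$ is $\Sigma_1$ and $g$ has a $\Sigma_1$ graph, so that $\exists w\,(\Gamma(v,w)\land\neg\mathrm{True}_{\Pi_1}(w))$ stays within $\Sigma_1$. (One could instead produce $\sigma$ explicitly as a Rosser-type fixed point relative to $\PA+\neg\Con_T$, but then one must argue carefully because that theory is not $\Sigma_1$-sound, whereas the diagonal argument above avoids any appeal to soundness.)
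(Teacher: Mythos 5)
Your argument is correct, but it takes a genuinely different route from the paper. The paper's proof picks, via a theorem of Guaspari, a $\Sigma_1$ sentence $\sigma$ that is $\Pi_1$-conservative over $\PA$ and not provable in $\PA + \neg \Con_T$: if $\sigma'$ witnessed both conditions, then $\PA + \sigma \vdash \neg \sigma'$ would yield $\PA \vdash \neg \sigma'$ by conservativity, whence $\PA + \neg\Con_T \vdash \sigma$, a contradiction. You instead observe that the two conditions on $\sigma'$ amount exactly to $\sigma$ being provably equivalent to a $\Pi_1$ sentence over $U := \PA + \neg\Con_T$, and then prove directly (by diagonalizing against a computable selector $g$ for such $\Pi_1$ equivalents, using a $\Pi_1$ partial truth definition) that no consistent c.e.\ extension of $\PA$ can provably collapse $\Sigma_1$ into $\Pi_1$. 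Both arguments are sound; the delicate points in yours --- that the fixed point $\sigma_0$ can be taken genuinely $\Sigma_1$, and that $\PA$ verifies the functionality of the graph $\Gamma$ at $\gn{\sigma_0}$ so that the truth definition can be unwound to give $\PA \vdash \sigma_0 \leftrightarrow \neg\pi^*$ --- are standard and you flag them appropriately. What each approach buys: the paper's is shorter but rests on the cited (and nontrivial) existence of $\Pi_1$-conservative $\Sigma_1$ sentences unprovable in $\PA + \neg\Con_T$; yours is self-contained, isolates the clean equivalence ``a suitable $\sigma'$ exists iff $\sigma$ is $(\PA+\neg\Con_T)$-provably $\Pi_1$'' (which is of some independent interest as a characterization), and uses only the consistency of $\PA + \neg\Con_T$, i.e.\ G\"odel's second incompleteness theorem.
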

\begin{proof}
Let $\sigma$ be a $\Sigma_1$ sentence which is $\Pi_1$-conservative over $\PA$ such that $\PA + \neg \Con_T \nvdash \sigma$. 
The existence of such a sentence is proved by Guaspari \cite[Theorem 2.4]{Gua} (see also Lindstr\"om \cite[Exercise 5.5 (b)]{Lin}). 
Suppose that for some $\Sigma_1$ sentence $\sigma'$, $\PA \vdash \neg (\sigma \land \sigma')$ and $\PA \vdash \neg \Con_T \to \sigma \lor \sigma'$. 
Then, $\PA + \sigma \vdash \neg \sigma'$, and hence $\PA \vdash \neg \sigma'$ by $\Pi_1$-conservativity. 
Thus, $\PA + \neg \Con_T \vdash \sigma$. 
This is a contradiction. 

\end{proof}

Guaspari and Solovay \cite{GS} introduced the logic $\mathbf{R}$ of witness comparison formulas, and also Shavrukov \cite{Sha} introduced the bimodal logic $\mathbf{GR}$ of usual and Rosser provability predicates. 
As in our observations in Section \ref{Modal}, it may also be possible to extend Theorem \ref{MTR} to the framework of modal logic via these logics. 
For example, for $\mathbf{R}$, we expect that the condition $\mathbf{R} + \{\Diamond^n \top \mid n \in \omega\} \nvdash \Box \Box A \to \Box A$ works well. 

\section*{Acknowledgements}

This work was partly supported by JSPS KAKENHI Grant Number JP19K14586. 
The author would like to thank Sohei Iwata, Haruka Kogure, and Yuya Okawa for their helpful comments. The author would also like to thank the anonymous referees for their valuable comments and suggestions. 
In particular, the current proofs of Theorems \ref{Thm1} and \ref{Thm2} using Lemmas \ref{Lem1} and \ref{Lem2} are based on the ideas of one of the referees, which made the proofs significantly simpler and easier to understand.

\bibliographystyle{plain}
\bibliography{ref}

\end{document}